 \DeclareFontFamily{U}{min}{}
 \DeclareFontShape{U}{min}{m}{n}{<-> udmj30}{}
\tikzset{arrow/.style={-stealth}}
\tikzset{arrowshorter/.style={-stealth, shorten <=2pt, shorten >=2pt}}
\tikzset{arrowmuchshorter/.style={-stealth, shorten <=7pt, shorten >=6pt}}
\tikzset{mono/.style={>-stealth}} 
\tikzset{epi/.style={-twotriang}} 
\tikzset{twoarrowlonger/.style={double,double distance=1.5pt,
shorten <=5pt,shorten >=6pt,
decoration={markings,mark=at position -4pt with {\arrow[scale=1.75]{>}}},
preaction={decorate}}} 
\def\minusone#1%
\def\sumplusone#1#2%
\tikzset{mapstikz/.style={-stealth, 
decoration={markings,mark=at position 0pt with {\arrow[scale=0.5]{|}}}, preaction={decorate}}}
\tikzset{
    dot/.style={circle,draw,fill,inner sep=1pt}
}
\theoremstyle{plain}   
\newtheorem{thm}{Theorem}[section] 
\let\c@thm\c@thm\makeatother
\newtheorem{cor}{Corollary}[section]
\let\c@cor\c@thm\makeatother
\newtheorem{lem}{Lemma}[section]
\let\c@lem\c@thm\makeatother
\newtheorem{prop}{Proposition}[section]
\let\c@prop\c@thm\makeatother
\let\c@claim\c@thm\makeatother
\let\c@question\c@thm\makeatother
\newtheorem*{unnumberedtheoremA}{Theorem A}
\newtheorem*{unnumberedtheoremB}{Theorem B}
\theoremstyle{definition}
\newtheorem{defn}{Definition}[section]
\let\c@defn\c@thm\makeatother
\newtheorem{const}{Construction}[section]
\let\c@const\c@thm\makeatother
\newtheorem{notn}{Notation}[section]
\let\c@notn\c@thm\makeatother
\theoremstyle{remark}
\newtheorem{rmk}{Remark}[section]
\let\c@rmk\c@thm\makeatother
\newtheorem{ex}{Example}[section]
\let\c@ex\c@thm\makeatother
\let\c@observationn\c@thm\makeatother
\newtheorem{digression}{Digression}[section]
\let\c@digression\c@thm\makeatother
\let\c@equation\c@thm
\numberwithin{equation}{section}
\newcommand{\newrefformat}[2]{}
\crefname{lem}{Lemma}{Lemmas}
\crefname{thm}{Theorem}{Theorems}
\crefname{defn}{Definition}{Definitions}
\crefname{notn}{Notation}{Notations}
\crefname{const}{Construction}{Constructions}
\crefname{prop}{Proposition}{Propositions}
\crefname{rmk}{Remark}{Remarks}
\crefname{cor}{Corollary}{Corollaries}
\crefname{equation}{Display}{Displays}
\crefname{ex}{Example}{Examples}
\newcommand{\cC}{\mathcal{C}}
\newcommand{\cD}{\mathcal{D}}
\newcommand{\cM}{\mathcal{M}}
\newcommand{\cN}{\mathcal{N}}
\newcommand{\cS}{\mathcal{S}}
\newcommand{\cat}{\cC\!\mathit{at}}
\newcommand{\CAT}{\cC\!\mathit{AT}}
\newcommand{\set}{\cS\!\mathit{et}}
\newcommand{\SET}{\cS\!\mathit{ET}}
\newcommand{\sset}{\mathit{s}\set}
\newcommand{\SSET}{\mathit{s}\SET}
\newcommand{\qcat}{\mathit{q}\cat}
\newcommand{\pder}{\mathit{p}\cD\!\mathit{er}}
\newcommand{\pDER}{\mathit{p}\cD\!\mathit{ER}}
\newcommand{\catfin}{cat}
\DeclareMathOperator{\Ob}{Ob}
\DeclareMathOperator{\colim}{colim}
\DeclareMathOperator{\coeq}{coeq}
\DeclareMathOperator{\id}{id}
\DeclareMathOperator{\ho}{ho}
\DeclareMathOperator{\Mor}{Mor}
\DeclareMathOperator{\Hom}{Hom}
\DeclareMathOperator{\Map}{Map} 
\DeclareMathOperator{\op}{op}
\newcommand{\ssset}{\mathit{s}\mathit{s}\set}
\newcommand{\rep}[1]{\mathbb Ho(#1)}
\newcommand{\po}{\ar@{}[dr]|{\text{\pigpenfont R}}}
\newcommand{\pb}{\ar@{}[dr]|{\text{\pigpenfont J}}}
\begin{document}

\title{A model structure on prederivators for $(\infty,1)$-categories}

\author[D.~Fuentes-Keuthan]{Daniel Fuentes-Keuthan}
\address{Department of Mathematics,
Johns Hopkins University, Baltimore
}
\email{danielfk@jhu.edu}

\author[M.~K\k{e}dziorek]{Magdalena K\k{e}dziorek}
\address{Mathematical Institute, Utrecht University}
\email{m.kedziorek@uu.nl}

\author[M.~Rovelli]{Martina Rovelli}
\address{Department of Mathematics,
Johns Hopkins University, Baltimore
}
\email{mrovelli@math.jhu.edu}

\thanks{
The second author was supported by the NWO Veni grant 639.031.757. 
The third author was partially funded by the Swiss National Science Foundation, grant P2ELP2\textunderscore172086.
}

\maketitle

\begin{abstract}
By theorems of Carlson and Renaudin, the theory of $(\infty,1)$-categories embeds in that of prederivators. 
The purpose of this paper is to give a two-fold answer to the inverse problem: understanding which prederivators model $(\infty,1)$-categories, either strictly or in a homotopical sense. First, we characterize which prederivators arise on the nose as prederivators associated to quasicategories. Next, we put a model structure on the category of prederivators and strict natural transformations, and prove a Quillen equivalence with the Joyal model structure for quasicategories.
\end{abstract}

\setcounter{tocdepth}{1}

\tableofcontents

\section*{Introduction}
The notion of prederivator appeared independently and with different flavours in works by Grothendieck \cite{grothendieck}, Heller \cite{heller} and Franke \cite{franke}. A \emph{prederivator} is a contravariant $2$-functor $\mathbb D\colon\cat^{\op}\to\CAT$, which we regard as minimally recording the homotopical information of a given $(\infty,1)$-category.

The idea is that the value $\mathbb D(J)$ at a small category $J$ represents the homotopy category of $J$-shaped diagrams of the desired homotopy theory. For example: the prederivator $D_{\cC}$ associated to any ordinary $1$-category $\cC$ records the functor categories
$D_{\cC}(J):=\cC^J$, the prederivator $\rep{\cM}$ associated to any model category $\cM$, defined by $\rep{\cM}(J):=\cM^J[W^{-1}]$, is obtained by inverting the class $W$ of levelwise weak equivalences of $J$-shaped diagrams in $\cM$, and the prederivator $\rep{X}$ associated to any quasi-category $X$ can be realized as $\rep{X}(J):=\ho(X^J)$, where $\ho\colon\sset\to\cat$ denotes the left adjoint to the nerve functor.

The prederivator associated to an $(\infty,1)$-category of some flavour is suitably homotopy invariant. The initial intuition might suggest that passing to prederivators would cause a significant loss of information, given that it involves taking homotopy categories. However, as pointed out in Shulman's note \cite{shulmanblog}, ``derivators seem to suffice for all sorts of things that one might want to do in an $(\infty,1)$-category''.

A heuristic explanation can be attributed to the fact that the data of the prederivator associated to $X$  records the collection $\ho(X^J)$, where the parameter $J$ runs over  $\cat^{\op}$, and is therefore a true enhancement of the bare homotopy category $\ho(X)$ of the $(\infty,1)$-category $X$.
Additional evidence is given in the work of Riehl-Verity \cite{RV4,RVbook}, who show that much of $(\infty,1)$-category theory can be recovered by truncating each functor $(\infty,1)$-category $X^Y$ between two $(\infty,1)$-categories $X$ and $Y$ to their homotopy category $\ho(X^Y)$.

A more rigorous validation of the fact that prederivators carry all the relevant information of a given $(\infty,1)$-category was given by Renaudin in \cite{renaudin}. He proves that the bi-localization of the $2$-category of left-proper combinatorial model structures at the class of Quillen equivalences embeds in the $2$-category of prederivators and pseudonatural transformations.
Carlson shows in \cite{carlson} an analogous result in a different framework. He proved that the functor $\mathbb Ho$ actually gives a simplicial embedding $\mathbb Ho\colon\qcat\to\pder^{st}$ of the category of (small) quasicategories in that of (small) prederivators and strict natural transformations.

Inspired by the classical theorem of Brown representability, Carlson \cite{carlson}
raised the question of whether the essential image of $\mathbb Ho$ could be characterized.
In this paper, we provide such a characterization.
More precisely, we recognise that all prederivators of the form $\rep{X}$ meet certain conditions, which we introduce in \cref{sectionquasirepresentable} under the terminology of ``quasi-representability'' (\cref{definitionrepresentable}). Essentially, a prederivator is \emph{quasi-representable} if and only if its value at the level of objects commutes with certain colimits, its value at the level of morphisms is suitably determined by that at the level of objects, and its underlying simplicial set is a quasi-category.

One of the main results of this paper is the following theorem, which appears in the paper as \cref{characterizationrepresentable} and describes which prederivators arise from quasi-categories in a strict sense.

\begin{unnumberedtheoremA}
A prederivator $\mathbb D$ is quasi-representable if and only if it is of the form $\mathbb D\cong \rep{X}$ for some quasi-category $X$.
\end{unnumberedtheoremA}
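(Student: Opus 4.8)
The plan is to prove the two implications separately, using the \emph{underlying simplicial set} as the bridge between prederivators and quasi-categories. For a prederivator $\mathbb D$, write $U\mathbb D$ for its underlying simplicial set, with $(U\mathbb D)_n := \Ob\mathbb D([n])$ and simplicial structure induced by the functoriality of $\mathbb D$ along the maps of $\Delta\subset\cat$. The key preliminary observation is that $U\rep X\cong X$ for any quasi-category $X$: since $N[n]=\Delta^n$, we have $(U\rep X)_n=\Ob\ho(X^{[n]})=(X^{\Delta^n})_0=\Hom_{\sset}(\Delta^n,X)=X_n$, compatibly with the simplicial operators. I will also use repeatedly that every small category is the canonical colimit $J\cong\colim_{\Delta^n\to NJ}[n]$ of its simplices: apply the colimit-preserving functor $\ho$ to the density presentation $NJ\cong\colim_{\Delta^n\to NJ}\Delta^n$ and use $\ho N\cong\id$.

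For the direction ``$\rep X\Rightarrow$ quasi-representable'', since the conditions of \cref{definitionrepresentable} are invariant under isomorphism of prederivators, it suffices to verify them for $\mathbb D=\rep X$ itself. The underlying-quasicategory condition is immediate from $U\rep X\cong X$. For the colimit condition, note that $\Ob\rep X(J)=\Hom_{\sset}(NJ,X)$, and since $\Hom_{\sset}(-,X)$ turns colimits of simplicial sets into limits of sets, $\Ob\rep X(-)$ carries the presentation $J\cong\colim_{\Delta^n\to NJ}[n]$ to the corresponding limit, as required. For the morphism condition I would unwind the standard description of the homotopy category of a quasi-category: because $X$ is a quasi-category and the cotensor $X^J$ is again one, morphisms of $\ho(X^J)$ are $1$-simplices of $X^J$ modulo homotopy, with composition witnessed by $2$-simplices; via $\Delta^1\times NJ\cong N([1]\times J)$ and $\Delta^2\times NJ\cong N([2]\times J)$ this is exactly the object-level data of $\rep X$ at $[1]\times J$ and $[2]\times J$, which is what the condition records.

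For the converse, let $\mathbb D$ be quasi-representable and set $X:=U\mathbb D$, which is a quasi-category by hypothesis, so that $\rep X$ is defined. I would produce, for each $J$, an isomorphism of categories $\rep X(J)\cong\mathbb D(J)$ natural in $J$. On objects, the natural restriction-to-simplices map $\Ob\mathbb D(J)\to\lim_{\Delta^n\to NJ}\Ob\mathbb D([n])=\Hom_{\sset}(NJ,X)=\Ob\rep X(J)$ is a bijection precisely by the colimit condition applied to $J\cong\colim_{\Delta^n\to NJ}[n]$. On morphisms, the morphism condition identifies $\Mor\mathbb D(J)$ with the homotopy-quotient of $\Ob\mathbb D([1]\times J)$ along the data at $[2]\times J$, which is exactly the quotient computing $\Mor\ho(X^J)$ under the identifications $\Ob\mathbb D([1]\times J)\cong\Hom_{\sset}(N([1]\times J),X)=(X^J)_1$ and likewise for $[2]\times J$. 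These two identifications are compatible with source, target, identities, and composition, hence assemble into an isomorphism of categories $\mathbb D(J)\cong\rep X(J)$; naturality in $J$ and compatibility with the $2$-cells then upgrade this to an isomorphism of prederivators $\mathbb D\cong\rep X$.

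The main obstacle will be the morphism-level argument in the converse. One must check that quasi-representability reconstructs not merely the hom-\emph{sets} of $\mathbb D(J)$ but the entire categorical structure---that composition, identities, and the homotopy relation defining $\ho(X^J)$ all coincide with what $\mathbb D$ records at $[1]\times J$ and $[2]\times J$---and that every such identification is natural in $J$ and respects the strict transformations together with the $2$-functoriality. By contrast, the object-level comparison is a formal consequence of the density of the simplices together with the colimit condition, and the ``$\rep X\Rightarrow$ quasi-representable'' direction is essentially a verification that the definition of $\rep X$ was engineered to satisfy.
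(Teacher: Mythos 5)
Your proposal is correct and takes essentially the same route as the paper: your forward direction matches the paper's \cref{representableisrepresentable} (using $R\rep{X}\cong X$, the compatibility of $\rep{X}$ with cotensors by categories, and the coequalizer description of $\ho$ of a quasi-category), and your converse---setting $X:=R\mathbb D$ and matching $\mathbb D$ with $\rep{X}$ on objects via Condition (1) and on morphisms via Condition (2), then checking compatibility with the categorical structure---is exactly the content of the paper's \cref{testonfirstrow,inverseforrepresentable}. The only cosmetic difference is that the paper packages your direct levelwise comparison as the statement that $R$ reflects isomorphisms between quasi-representable prederivators, and it too leaves the source/target/identity/composition compatibility as ``a straightforward check.''
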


Next we concentrate on the homotopical analysis, identifying a suitable notion of weak equivalence of prederivators so that the homotopy category of $\pder^{st}$ is equivalent to that of $\qcat$.
We show that this class of weak equivalences is part of a model structure on $\pder^{st}$ in \cref{transferredmodelstructure} and we prove that this model structure is equivalent to the model structure for quasi-categories in \cref{Quillenequivalence}.
The following is the main result of the paper, and it in particular validates prederivators as a model of $(\infty,1)$-categories.

\begin{unnumberedtheoremB}
There exists a cofibrantly generated model structure on the category $\pder^{st}$ of (small) prederivators and strict natural transformations that is Quillen equivalent to the Joyal model structure on the category $\sset$ of simplicial sets.
\end{unnumberedtheoremB}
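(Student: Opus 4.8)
The plan is to realize the asserted model structure as the right-induced (transferred) structure along a single adjunction, and then to promote the resulting Quillen pair to a Quillen equivalence; this is precisely the content split across \cref{transferredmodelstructure} and \cref{Quillenequivalence}. Let $U\colon\pder^{st}\to\sset$ be the underlying simplicial set functor, given on $n$-simplices by $(U\mathbb D)_n=\Ob\mathbb D([n])$, with structure maps induced by $\mathbb D$ on the cofaces and codegeneracies of $[\bullet]$. Since limits in $\pder^{st}$ are computed pointwise in $\CAT$ and $\Ob\colon\CAT\to\set$ preserves limits, $U$ preserves limits; once the ambient size issues for small prederivators are handled so that $\pder^{st}$ is locally presentable, $U$ admits a left adjoint $F$. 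One computes $F(\Delta^n)$ to be the prederivator $J\mapsto\cat(J,[n])$, which equals $\rep{\Delta^n}$, so that $F$ and the functor $\rep{-}$ agree on representables; assembling the resulting colimit comparison yields a natural map $\theta_X\colon FX\to\rep X$ (the counit of $F\dashv U$ at $\rep X$) satisfying the triangle identity $U\theta_X\circ\eta_X=\id_X$, where $\eta$ is the unit.

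For the existence statement I would invoke Kan's transfer theorem along $F\dashv U$, declaring a map of prederivators to be a weak equivalence, respectively a fibration, exactly when $U$ carries it to a Joyal equivalence, respectively a Joyal fibration, and taking $F(I)$ and $F(J)$ as generating cofibrations and generating acyclic cofibrations, where $I$ and $J$ generate the Joyal structure. Local presentability of $\pder^{st}$ ensures that $F(I)$ and $F(J)$ permit the small object argument, so the sole substantive hypothesis is the acyclicity condition: every transfinite composite of pushouts of maps in $F(J)$ must be a weak equivalence, that is, sent by $U$ to a Joyal equivalence. I would verify this by Quillen's path object argument, constructing a functorial fibrant replacement together with path objects on fibrant prederivators whose image under $U$ is a path object for the Joyal structure; note that $\mathbb D$ is fibrant precisely when $U\mathbb D$ is a quasicategory, by adjunction. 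Controlling these $F(J)$-cell complexes through the non-cocontinuous functor $U$ is the first main obstacle.

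By construction $F\dashv U$ is a Quillen pair, since $U$ creates fibrations and acyclic fibrations. To upgrade it to a Quillen equivalence I would use that $U$ creates all weak equivalences, so $U$ reflects weak equivalences between fibrant objects trivially, and it remains only to check that the derived unit is a Joyal equivalence for every (necessarily cofibrant) simplicial set $X$. The fibrant replacement map $FX\to RFX$ is an acyclic cofibration, hence carried by $U$ to a Joyal equivalence, so by two-out-of-three the derived unit is a Joyal equivalence as soon as $\eta_X\colon X\to UFX$ is one. The retraction identity $U\theta_X\circ\eta_X=\id_X$ then reduces the entire Quillen equivalence to a single key lemma: the comparison $\theta_X\colon FX\to\rep X$ is a weak equivalence of prederivators for every $X$, equivalently $U\theta_X\colon UFX\to X$ is a Joyal equivalence.

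The proof of this key lemma is where I expect the real work, and where \cref{characterizationrepresentable} enters. For a quasicategory $X$ the target $\rep X$ is quasi-representable, hence fibrant, so it serves as a fibrant replacement of $FX$ once $\theta_X$ is known to be a weak equivalence. I would prove that $\theta_X$ is a weak equivalence by cellular induction: it is an isomorphism on each representable $\Delta^n$, since there $F$ and $\rep{-}$ agree, and one propagates this along the cells of $X$ using that $F$ is cocontinuous while the values of $\rep X$ on the relevant diagrams are pinned down by the colimit-commutation and object-determination conditions of \cref{definitionrepresentable}. These are exactly the conditions measuring the failure of $\rep{-}$ to be cocontinuous, and they are what let one identify $UFX\to X$ as a Joyal equivalence; making this precise, and reducing an arbitrary $X$ to the fibrant-underlying case by naturality of $\theta$ along a Joyal fibrant replacement, is the second and principal obstacle. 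Granting the key lemma, Theorem~B follows by combining \cref{transferredmodelstructure} and \cref{Quillenequivalence}.
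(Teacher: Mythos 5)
Your skeleton matches the paper's: transfer the Joyal model structure along the adjunction $F\dashv U$ (this is exactly the paper's $L\dashv R$ from \cref{mainadjunction}), then upgrade the Quillen pair to a Quillen equivalence via the unit. But there is a genuine gap: the two steps you defer as ``main obstacles'' are the entire content of the theorem, and your plan for both rests on a false premise. You assert that $U$ is \emph{non-cocontinuous}. In fact $U$ preserves \emph{all} colimits: it factors as restriction along the inclusion $\Delta^{\op}\hookrightarrow\catfin^{\op}$ followed by postcomposition with $\Ob\colon\cat\to\set$, and both of these functors admit right adjoints (the enriched right Kan extension, which exists because $\catfin$ is a small $2$-category, and postcomposition with the codiscrete-category functor, respectively). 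This is \cref{adjointsofR} in the paper, and it is the linchpin of the whole argument.

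Once cocontinuity of $U$ is in hand, both of your obstacles dissolve. Since $UF$ and $\id_{\sset}$ are both cocontinuous and agree on representables ($UF\Delta[n]\cong U D_{[n]}\cong N[n]\cong \Delta[n]$, cf.\ \cref{rightinverse3}), the unit $\eta_X\colon X\to UFX$ is an \emph{isomorphism} for every simplicial set $X$ (\cref{rightinverse2}), not merely a Joyal equivalence. The acyclicity hypothesis of Kan's transfer theorem is then immediate: $U$ carries any $F(J)$-cell complex to a $J$-cell complex, because $U$ preserves pushouts and transfinite composites and $UF(j)\cong j$ for each generating acyclic cofibration $j$; such a cell complex is an acyclic cofibration in the Joyal model structure. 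No path-object argument is needed, and the one you sketch is itself problematic: a fibrant replacement functor would have to be produced before the model structure exists, and $U=R$ does not commute with the simplicial cotensors of $\pder^{st}$ (see \cref{finaldigression}), so exhibiting path objects over $U$ is not straightforward. Likewise, the Quillen equivalence requires neither cellular induction nor any appeal to \cref{definitionrepresentable} or Theorem~A: since every simplicial set is cofibrant and $U$ preserves all weak equivalences, the derived unit is $\eta_X$ itself, an isomorphism, and $U$ creates weak equivalences, so \cref{QuillenequivalenceRcreates} applies directly. Your ``key lemma'' that $U\theta_X$ is a Joyal equivalence is true, but it should fall out as a corollary of $\eta_X$ being invertible (the identity $U\theta_X\circ\eta_X=\id_X$ gives $U\theta_X=\eta_X^{-1}$), not be the engine of the proof. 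The missing idea, concretely, is the proof that $U$ admits a right adjoint.
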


The desired model structure is transferred from the Joyal model structure on $\sset$ using a certain  functor $R\colon\pder^{st}\to\sset$, which will be defined in Construction 1.13 and was already used to prove \cite[Proposition 2.9]{carlson}. It is interesting to observe that, despite the functor $\mathbb Ho\colon\sset\to\pder^{st}$
having been more extensively studied in the history of prederivators, it cannot be used to transfer a model structure given that it does not admit an adjoint on either side.

Finally, we conclude the paper with an explicit description of the generating cofibrations, and a characterization of the fibrant objects and the acyclic fibrations in terms of suitable lifting properties. In particular, every quasi-representable prederivator is fibrant, and any fibrant prederivator is weakly equivalent to a quasi-representable one.

\textbf{Outline of the paper.} In Section 1 we introduce the category $\pder^{st}$ of small prederivators and strict natural transformations, and the further structure that $\pder^{st}$ possesses, which will be used later in the paper. In Section 2 we identify the image of $\mathbb Ho\colon\qcat\to\pder^{st}$ as the class of quasi-representable prederivators. In Section $3$ we transfer the Joyal model structure from $\sset$ to $\pder^{st}$, study its properties, and prove the desired Quillen equivalence.

\textbf{Further directions.}
The Quillen equivalence from Theorem B justifies that prederivators have the same \emph{homotopy theory} as  quasi-categories. In future work we aim to produce a rigorous comparison of the \emph{category theory} of prederivators and that of quasicategories\footnote{Roughly speaking, the \emph{homotopy theory} of a model of $(\infty,1)$-categories is comprised of the information stored in the homotopy category of that model. It detects, for instance, when two given $(\infty,1)$-categories are equivalent to one another. On the other hand, the \emph{category theory} of a model of $(\infty,1)$-categories focuses on the homotopy $2$-category of the model. This focuses, for instance, on whether there is an adjunction between two given $(\infty,1)$-categories, whether an $(\infty,1)$-category is stable, or whether an element of an $(\infty,1)$-category $Q$ is the limit of a given diagram in $Q$.
}.

The standard method to access the category theory of a model of $(\infty, 1)$-categories presented by a model category is to upgrade the model category to an $\infty$-cosmos, in the sense of Riehl-Verity.  This is done by providing a model categorical enrichment over the Joyal model structure on simplicial sets.
The model structure that we construct on the category of prederivators is unfortunately not enriched over the Joyal model structure, so we cannot conclude easily that the category of fibrant prederivators forms an $\infty$-cosmos.

However, given that Carlson proves that $\mathbb Ho\colon\qcat\to\sset$ is simplicially fully faithful,
the functor $\mathbb Ho$ induces an isomorphism of $\infty$-cosmoi onto its image. We plan to return to this topic in a future project and compare the $2$-category of quasicategories (which has been developed e.g.~in \cite{Joyalnotes,htt,RVbook}) with the $2$-category of (quasi-representable) prederivators (which has been developed e.g.~in \cite{groth,grothmonoidal}).

\textbf{Acknowledgements.}
The authors are grateful to Emily Riehl for carefully reading an earlier draft of this paper and providing valuable feedback. We also would like to thank Kevin Carlson and Viktoriya Ozornova for many interesting discussions on the subject.

\section{The category of prederivators}

A prederivator is typically defined as a $2$-functor
$\cat^{\op}\to\CAT$, where $\cat$ denotes the category of small categories and $\CAT$ denotes the very large category of large categories. Several authors  (see e.g.~\cite{carlson,groth,gps,renaudin}) have considered the (very large) category $\pDER^{st}$ of (large) prederivators and strict natural transformations, and the (very large) category $\pDER^{pseudo}$ of (large) prederivators and pseudonatural transformations. In this paper we are concerned with the former point of view.

We aim to study the homotopy theory of prederivators and compare it to the homotopy theory of quasi-categories, as presented by the Joyal model structure on the category $\sset$ of small simplicial sets. In order to employ standard references for model category theory and enriched category theory, we will focus on a type of prederivators that assemble into a locally small category\footnote{While working with a locally small category of prederivators simplifies the exposition, this restriction is not necessary. An alternative approach would be to define prederivators to be $2$-functors $\cat^{op}\to\CAT$, or $2$-functors $\cat^{\op}\to\cat$, which assemble into a category that is not necessarily locally small, and to compare it with the very large category $\SSET$ of \emph{large} simplicial sets.
To see that all of our constructions go through unchanged upon ascending to a larger universe, see \cite{lowuniverses}.}.
To this end, we replace
the large category $\CAT$ with the locally small category $\cat$ as a target category of prederivators and we follow Grothendieck's approach from \cite[Chapter 1, Section 2]{grothendieck} in replacing the (large) indexing category $\cat^{op}$ with a smaller one:  the category $\catfin$ of ``homotopically finite categories'', which was already considered by Carlson in \cite{carlson}. With these arrangements, we show that the category of $2$-functors $\catfin^{\op}\to\cat$ will turn out to be locally small.

\begin{defn}[{\cite[Definition 0.3]{carlson}}]
\label{catfin}
A category $J$ is \emph{homotopy finite}, also known as \emph{finite direct}, if the nerve $NJ$
has finitely many nondegenerate simplices; equivalently, if $J$ is finite, skeletal, and admits
no nontrivial endomorphisms. We denote by $\catfin$ the full subcategory of $\cat$ consisting of homotopy finite categories.
\end{defn}

\begin{ex}
Any category $[n]$ is homotopically finite, so $\catfin$ includes $\Delta$ fully faithfully.
\end{ex}

\begin{ex}
The category containing a free isomorphism is not homotopically finite.
\end{ex}

\begin{ex}
The category associated to any infinite group is not homotopically finite.
\end{ex}

\begin{prop}
\label{catfinsmall}
The $2$-category $\catfin$ is small.
\end{prop}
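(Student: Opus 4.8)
The plan is to unwind what smallness means for the $2$-category $\catfin$ and verify the two ingredients separately: that the hom-categories are small, and that the objects can be organized into a set. I would record first that the statement is really about the size of $\catfin$ up to equivalence, since, taken literally, the collection of homotopy finite categories is a proper class (for instance the terminal category has a proper class of isomorphic copies, one for each choice of its underlying object), so part of the work is to pass to a skeleton. Concretely, I would show that there is only a \emph{set} of isomorphism classes of objects, and that each hom-category is small, and then choose one representative in each isomorphism class to obtain a small, full sub-$2$-category that is biequivalent to $\catfin$; throughout the paper one then takes $\catfin$ to be this skeleton.

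The hom-categories are the easy part, and I would dispatch them using nothing but finiteness. Given homotopy finite categories $J$ and $K$, each has finitely many objects and finitely many morphisms by \cref{catfin}. A functor $J\to K$ is determined by its action on the finite object set and the finite morphism set of $J$, and there are only finitely many functions of the relevant sort into the finite sets underlying $K$; hence there are finitely many functors $J\to K$. Likewise a natural transformation between two such functors is a choice of one component in $K$ at each of the finitely many objects of $J$, so there are finitely many of those as well. Therefore the hom-category $\Func(J,K)$ is a \emph{finite} category, and in particular small.

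The crux is bounding the objects, and here I would argue essential smallness by a finite-encoding argument. Every homotopy finite category is finite, hence isomorphic to one whose object set is an initial segment $\{1,\dots,n\}$ of the natural numbers and whose morphisms and composition are likewise drawn from a fixed countable set; the collection of all such ``labelled'' finite categories is visibly a set, so the isomorphism classes of homotopy finite categories form a set. (Equivalently, one may use that the nerve $N\colon\cat\to\sset$ is fully faithful, so $J$ is recovered up to isomorphism from $NJ$, and $NJ$ ranges over the simplicial sets with finitely many nondegenerate simplices, which are classified up to isomorphism by finite combinatorial data.) The main obstacle is not any single estimate—each category being finite makes the counting routine—but rather the conceptual passage from this essential smallness to the strict smallness asserted in the statement: I would be careful to fix an actual set of skeletal representatives, so that the functor $2$-category $\Func(\catfin^{\op},\cat)$ studied in the rest of Section~1 is genuinely locally small and not merely essentially so, and then note that replacing $\catfin$ by such a skeleton changes none of the subsequent constructions up to equivalence.
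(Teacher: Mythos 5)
Your proof is correct, and its combinatorial core coincides with the paper's: both arguments rest on the fact that a homotopy finite category is finite, so that there are only finitely many functors and natural transformations between any two of them, and only a set's worth of such categories once one counts correctly. The genuine difference lies in how the object count is made precise. The paper partitions the objects of $\catfin$ into classes $C_{m,n}$ of homotopy finite categories with exactly $m$ objects and $n$ arrows and asserts that each $C_{m,n}$ is finite; read literally this is false, for exactly the reason you point out (already the terminal category has a proper class of isomorphic copies, one for each choice of underlying object set), so the paper is implicitly identifying isomorphic categories, i.e.\ tacitly working with a skeleton of $\catfin$. You make that step explicit: you prove essential smallness by encoding each homotopy finite category, up to isomorphism, by finite data on the object set $\{1,\dots,n\}$, and then fix a set of representatives so that $\catfin$ henceforth denotes a small full sub-$2$-category biequivalent to the literal one. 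This reinterpretation is exactly what is needed for the rest of Section 1 to be honest --- the local smallness of $\pder^{st}$, which invokes Kelly's theory of enriched functor categories, requires an actual set of objects --- whereas the paper leaves the identification to the reader. A minor further difference: the paper bounds the hom-data only by exhibiting functors and natural transformations as subsets of $\Hom_{\set}(\Mor(J),\Mor(K))$ and $\Hom_{\set}(\Ob(J),\Mor(K))$ respectively, while you observe the sharper fact that the hom-categories are finite; either suffices for smallness.
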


\begin{proof}
We show that the class of objects of $\catfin$ is countable, so in particular a set. If we denote by $C_{m,n}$ the class of homotopy finite categories which have precisely $m$ objects and $n$ arrows, then each $C_{m,n}$ is finite, and the set of objects
$$\Ob(\catfin)=\coprod_{m,n}C_{m,n},$$
is therefore countable.

Next, we observe that, for any homotopically finite categories $J$ and $K$, the class $\Hom_{\catfin}(J,K)$ of functors $J\to K$ is a set, given that it is a subset of $\Hom_{\set}(\Mor(J),\Mor(K))$.

Finally, we observe that for any homotopically finite categories $J$ and $K$ and functors $F,G\colon J\to K$, the class of strict natural transformations $F\Rightarrow G$ is a set, given that it is a subset of $\Hom_{\set}(\Ob(J),\Mor(K))$.
\end{proof}

We mimick the usual definition of prederivator in our context.

\begin{defn}
A \emph{(small) prederivator} is a $2$-functor $\catfin^{\op}\to\cat$.
We denote by $\pder^{st}$ the category of (small) prederivators and (strict) natural transformations.
We denote by $\Hom_{\pder^{st}}(\mathbb D,\mathbb E)$ the homset between two prederivators $\mathbb D$ and $\mathbb E$.
\end{defn}

\begin{prop}
The category $\pder^{st}$ is locally small.
\end{prop}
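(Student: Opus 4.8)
The plan is to unwind the definition of a strict natural transformation into a family of component data indexed by the objects of $\catfin$, and then to verify that each piece of this data ranges over a set while the indexing is itself over a set. Concretely, a strict natural transformation $\alpha\colon\mathbb D\Rightarrow\mathbb E$ assigns to every object $J$ of $\catfin$ a functor $\alpha_J\colon\mathbb D(J)\to\mathbb E(J)$, subject to the strict naturality constraints with respect to the $1$- and $2$-cells of $\catfin$. Hence $\alpha$ is in particular determined by (indeed, is) the family $(\alpha_J)_{J\in\Ob(\catfin)}$, so it suffices to bound the totality of such families.

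First I would invoke \cref{catfinsmall} to conclude that $\Ob(\catfin)$ is a set, so the family above is indexed over a set. Next, for each fixed $J$, the categories $\mathbb D(J)$ and $\mathbb E(J)$ are objects of $\cat$, hence \emph{small} categories; this is precisely where the restriction of the target from $\CAT$ to $\cat$ enters. A functor between small categories is completely determined by its underlying maps on objects and on morphisms, so the collection $\Func(\mathbb D(J),\mathbb E(J))$ of such functors is a set, being a subset of $\Hom_{\set}(\Ob\mathbb D(J),\Ob\mathbb E(J))\times\Hom_{\set}(\Mor\mathbb D(J),\Mor\mathbb E(J))$. This is the same mechanism used in the proof of \cref{catfinsmall}.

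Putting these together, every family $(\alpha_J)_J$ is an element of the product $\prod_{J\in\Ob(\catfin)}\Func(\mathbb D(J),\mathbb E(J))$, a set-indexed product of sets and hence a set. The homset $\Hom_{\pder^{st}}(\mathbb D,\mathbb E)$ is precisely the subcollection of those families satisfying the strict naturality conditions, so it is a subset of this product and therefore a set.

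I do not expect any genuine obstacle here: the argument is a bookkeeping verification entirely parallel to \cref{catfinsmall}. The only point requiring care—and the reason the statement holds at all—is that both restrictions built into the definition of a small prederivator are used: the smallness of the indexing $2$-category $\catfin$ guarantees a set's worth of components, and the smallness of the values $\mathbb D(J),\mathbb E(J)$ guarantees that each component ranges over a set. Neither of these holds for the very large category $\pDER^{st}$, which is why local smallness is a genuine feature of this particular setup rather than a formality.
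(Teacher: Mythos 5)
Your proof is correct, but it takes a different route from the paper. The paper disposes of the statement in one line: since $\catfin$ is a small $2$-category (\cref{catfinsmall}) and $\cat$ is locally small, the functor category $\pder^{st}$ is locally small by the general theory of enriched functor categories in \cite[\textsection 2.2]{Kelly}. You instead inline the set-theoretic content of that general fact: a strict natural transformation is nothing but its family of components $(\alpha_J)_{J\in\Ob(\catfin)}$, the indexing is over a set by \cref{catfinsmall}, each $\Func(\mathbb{D}(J),\mathbb{E}(J))$ is a set because $\mathbb{D}(J)$ and $\mathbb{E}(J)$ are small categories, and the naturality constraints carve out a subset of the product $\prod_{J}\Func(\mathbb{D}(J),\mathbb{E}(J))$. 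This is exactly how the cited general result is proved (Kelly's hom-objects of functor categories are ends, computed as subobjects of products over the set of objects of the domain), so nothing is lost; what you gain is a self-contained argument that makes visible where both smallness hypotheses enter --- the small indexing $2$-category and the small values --- and that runs parallel to the paper's own proof of \cref{catfinsmall}. What the paper's citation buys is brevity and consistency: Kelly's machinery is invoked repeatedly elsewhere (the enriched Yoneda lemma in \cref{yonedalemma}, the Kan extension in \cref{adjointsofR}), so appealing to it here keeps the enriched-categorical framing uniform. One small point of care in your version, which you do handle correctly: a \emph{strict} $2$-natural transformation carries no additional $2$-cell data beyond its components (unlike a pseudonatural one), so identifying $\alpha$ with the family $(\alpha_J)_J$ is legitimate, and the compatibility with $1$- and $2$-cells of $\catfin$ is a property rather than structure.
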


\begin{proof}
Given that $\cat$ is locally small, and that $\catfin$ was shown to be a small $2$-category in \cref{catfinsmall},
the category $\pder^{st}$ is an instance of the functor category as described in \cite[\textsection 2.2]{Kelly}, which is locally small.
\end{proof}

In this paper, all prederivators will be small and all natural transformations will be strict unless specified otherwise.

\begin{rmk}
\label{limitsinpder}
Since $\cat$ is complete and cocomplete, as mentioned in \cite[\textsection3.3]{Kelly} the category $\pder^{st}$ of prederivators is complete and cocomplete, with limits and colimits computed pointwise in $\cat$.
\end{rmk}

The category of $\pder^{st}$ is the underlying category of a $2$-category whose hom-categories $\Map_{\pder^{st}}(\mathbb D,\mathbb E)$ are given by strict natural transformations $\mathbb D\to\mathbb E$ and modifications of such. This is discussed in more detail in \cite[\textsection2.1]{groth}.

As $\pder^{st}$ is a category of enriched presheaves, standard constructions from enriched category theory apply.

\begin{notn}[{\cite[\textsection2.4]{Kelly}}]
\label{yonedaembedding}
Let $D\colon\cat\to\pder^{st}$
denote the Yoneda embedding, with the representable $D_K$ at a category $K$ being the 2-functor with values
$$D_K(J):=K^J.$$
\end{notn}

\begin{rmk}
\label{Dcommuteswithproduct}
The Yoneda embedding $D\colon\cat\to\pder^{st}$ preserves binary products, i.e., there are isomorphisms of prederivators
$$D_{J\times K}\cong D_J\times D_K.$$
\end{rmk}

As proven in {\cite[Section 4]{heller}}, the category $\pder^{st}$ is cartesian closed.

\begin{prop}
\label{pDercartesianclosed}
The category $\pder^{st}$ is cartesian closed, with the internal hom $\mathbb D^{\mathbb E}$ given by
$$\mathbb D^{\mathbb E}(J):=\Map_{\pder^{st}}(D_J\times\mathbb E,\mathbb D).$$
\end{prop}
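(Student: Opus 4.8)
The plan is to establish the adjunction $(-\times\mathbb E)\dashv(-)^{\mathbb E}$ on the underlying $1$-category of $\pder^{st}$, following the classical argument that a presheaf category is cartesian closed, with the ordinary Yoneda lemma replaced by its $\cat$-enriched counterpart. I would first record that the assignment $J\mapsto\mathbb D^{\mathbb E}(J):=\Map_{\pder^{st}}(D_J\times\mathbb E,\mathbb D)$ does define a prederivator: by \cref{yonedaembedding} the assignment $J\mapsto D_J$ is a $2$-functor $\catfin\to\pder^{st}$, hence so is $J\mapsto D_J\times\mathbb E$, and postcomposing with the contravariant $2$-functor $\Map_{\pder^{st}}(-,\mathbb D)$ yields a $2$-functor $\catfin^{\op}\to\cat$, as required. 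I would also use here that $D_K$ is precisely the representable $\cat$-valued presheaf $\catfin(-,K)$, since the enriched hom-category $\catfin(J,K)$ is the functor category $K^J=D_K(J)$.

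The crucial structural input is that the endofunctor $-\times\mathbb E$ of $\pder^{st}$ preserves all colimits. Indeed, by \cref{limitsinpder} both products and colimits in $\pder^{st}$ are computed pointwise in $\cat$, and since $\cat$ is itself cartesian closed the functor $-\times C$ preserves colimits in $\cat$ for every small category $C$; applying this levelwise gives the claim. The same reasoning shows that $-\times\mathbb E$ preserves copowers, which are cartesian products in the $\cat$-enriched setting.

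With this in hand I would verify the natural isomorphism
\[
\Hom_{\pder^{st}}(\mathbb A\times\mathbb E,\mathbb D)\;\cong\;\Hom_{\pder^{st}}(\mathbb A,\mathbb D^{\mathbb E})
\]
first for representable $\mathbb A=D_K$ and then in general. For $\mathbb A=D_K$, the $\cat$-enriched Yoneda lemma gives $\Map_{\pder^{st}}(D_K,\mathbb D^{\mathbb E})\cong\mathbb D^{\mathbb E}(K)=\Map_{\pder^{st}}(D_K\times\mathbb E,\mathbb D)$ as categories; passing to objects yields the desired bijection on hom-sets, since $\Hom_{\pder^{st}}=\Ob\circ\Map_{\pder^{st}}$. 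To pass to an arbitrary $\mathbb A$, I would use the density of the Yoneda embedding to present $\mathbb A$ as a weighted colimit of representables $D_K$. Since $-\times\mathbb E$ is cocontinuous and $\Hom_{\pder^{st}}(-,\mathbb D)$ carries colimits to limits, the left-hand side becomes the matching limit of $\Hom_{\pder^{st}}(D_K\times\mathbb E,\mathbb D)$ over the diagram presenting $\mathbb A$, while the right-hand side is the matching limit of $\Hom_{\pder^{st}}(D_K,\mathbb D^{\mathbb E})$ over the same diagram. The representable case identifies these two diagrams compatibly, producing the isomorphism for $\mathbb A$, and one checks the resulting bijection is natural in both $\mathbb A$ and $\mathbb D$.

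The main obstacle is bookkeeping in the enriched setting rather than any genuinely hard step. Because $\pder^{st}$ is a $\cat$-enriched presheaf category, the density presentation of $\mathbb A$ is a weighted colimit and not a mere colimit of representables, so one must confirm that $-\times\mathbb E$ preserves this weighted colimit (which follows from its preservation of ordinary colimits together with copowers) and that the enriched Yoneda isomorphism is tracked at the level of whole hom-categories $\Map_{\pder^{st}}$, not just their object sets, so that $\mathbb D^{\mathbb E}(J)$ really recovers the full category $\Map_{\pder^{st}}(D_J\times\mathbb E,\mathbb D)$ including its modifications. An alternative that avoids the explicit weighted colimit is to deduce the existence of a right adjoint to the cocontinuous functor $-\times\mathbb E$ from the adjoint functor theorem --- $\pder^{st}$ is locally presentable --- and then identify that right adjoint on representables by the enriched Yoneda lemma, recovering the stated formula.
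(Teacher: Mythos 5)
Your proposal is correct, but it necessarily takes a different route from the paper, because the paper contains no proof of this proposition at all: it simply defers to \cite[Section 4]{heller} and records the formula for the internal hom. What you supply is the standard presheaf-category argument transported to the $\cat$-enriched setting: $\mathbb D^{\mathbb E}$ is a prederivator by $2$-functoriality of $J\mapsto D_J\times\mathbb E$ and of $\Map_{\pder^{st}}(-,\mathbb D)$; the functor $-\times\mathbb E$ is cocontinuous because colimits and products in $\pder^{st}$ are pointwise (\cref{limitsinpder}) and $\cat$ is cartesian closed; the adjunction bijection holds on representables by the enriched Yoneda lemma (the paper's \cref{yonedalemma}, which is independent of cartesian closedness, so there is no circularity in invoking it even though it appears later in the text); and the general case follows by presenting $\mathbb A$ as a weighted colimit of representables. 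You also correctly flag the two genuine pitfalls: density gives a \emph{weighted} colimit rather than a conical one, so preservation of copowers is needed in addition to cocontinuity, and the comparison must be carried out at the level of the hom-categories $\Map_{\pder^{st}}$, applying $\Ob$ only at the very end, since $\Ob$ does not commute with cotensors or weighted limits in $\cat$. One caveat on your proposed alternative: the ordinary adjoint functor theorem produces only a $1$-categorical right adjoint $G$, and the ordinary Yoneda lemma then identifies only the object sets $\Ob(G(\mathbb D)(J))$, not the full categories $G(\mathbb D)(J)$; to recover the stated formula one still needs the adjunction to be $\cat$-enriched, or else the $\Map$-level computation of your main argument. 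In exchange for its length, your argument makes the proposition self-contained and makes visible exactly which structural facts about $\pder^{st}$ it rests on, whereas the paper's citation is merely economical.
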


As proven in \cite{mr}, the above cartesian closure can be used to enrich $\pder^{st}$ over $\sset$.

\begin{prop}
\label{pDersimplicial}
The category $\pder^{st}$ is the underlying category of a simplicial category whose hom-simplicial sets $\Map_{\pder^{st}}(\mathbb D,\mathbb E)_\bullet$
are given by
$$\Map_{\pder^{st}}(\mathbb D,\mathbb E)_n:=\Ob(\mathbb E^{\mathbb D}([n]))=\Hom_{\pder^{st}}(D_{[n]}\times\mathbb D,\mathbb E).$$
\end{prop}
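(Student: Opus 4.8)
The plan is to realize the claimed simplicial structure as an instance of the standard enrichment of a cartesian closed category along a cosimplicial comonoid. Since every $[n]$ is homotopy finite, restricting the Yoneda embedding $D\colon\cat\to\pder^{st}$ of \cref{yonedaembedding} along the inclusion $\Delta\hookrightarrow\catfin\hookrightarrow\cat$ yields a cosimplicial object $D_{[\bullet]}$ in $\pder^{st}$. For fixed prederivators $\mathbb D$ and $\mathbb E$, I first set $\Map_{\pder^{st}}(\mathbb D,\mathbb E)_n:=\Hom_{\pder^{st}}(D_{[n]}\times\mathbb D,\mathbb E)$ and upgrade this to a simplicial set: a map $\alpha\colon[m]\to[n]$ in $\Delta$ induces $D_\alpha\colon D_{[m]}\to D_{[n]}$, and precomposition with $D_\alpha\times\id_{\mathbb D}$ defines the corresponding simplicial operator, contravariantly in $\Delta$. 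The identification with $\Ob(\mathbb E^{\mathbb D}([n]))$ is then just an unwinding of \cref{pDercartesianclosed}: by definition $\mathbb E^{\mathbb D}([n])=\Map_{\pder^{st}}(D_{[n]}\times\mathbb D,\mathbb E)$, whose objects are exactly the strict natural transformations $D_{[n]}\times\mathbb D\to\mathbb E$.

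Next I would construct the composition and the units. Because $\pder^{st}$ is cartesian monoidal, every object carries a canonical cocommutative comonoid structure, with comultiplication the diagonal $\delta_{[n]}\colon D_{[n]}\to D_{[n]}\times D_{[n]}$ and counit the unique map to the terminal object; moreover every morphism of $\pder^{st}$ is automatically a morphism of comonoids, so $D_{[\bullet]}$ is a cosimplicial comonoid. Given $f\in\Map_{\pder^{st}}(\mathbb D,\mathbb E)_n$ and $g\in\Map_{\pder^{st}}(\mathbb E,\mathbb F)_n$, I define $g\circ f\in\Map_{\pder^{st}}(\mathbb D,\mathbb F)_n$ as the composite
\[
D_{[n]}\times\mathbb D\xrightarrow{\delta_{[n]}\times\id_{\mathbb D}}D_{[n]}\times D_{[n]}\times\mathbb D\xrightarrow{\id_{D_{[n]}}\times f}D_{[n]}\times\mathbb E\xrightarrow{g}\mathbb F,
\]
and I take the identity $n$-simplex at $\mathbb D$ to be the projection $D_{[n]}\times\mathbb D\to\mathbb D$ obtained from the counit together with the unit isomorphism.

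Finally I would verify the simplicial category axioms. Associativity and unitality of this composition are formal consequences of the coassociativity and counitality of the diagonal, which hold in any cartesian category. Naturality of composition in $[n]$, i.e.\ that the simplicial operators are compatible with $\circ$, reduces to the fact that each $D_\alpha$ commutes with the diagonals and counits; this is automatic since every morphism is a comonoid morphism. In degree zero the construction recovers $\pder^{st}$ itself, since $D_{[0]}$ is the terminal prederivator (as $D_{[0]}(J)=[0]^J$ is the terminal category for every $J$), whence $D_{[0]}\times\mathbb D\cong\mathbb D$ and $\Map_{\pder^{st}}(\mathbb D,\mathbb E)_0\cong\Hom_{\pder^{st}}(\mathbb D,\mathbb E)$. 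I expect the only genuinely delicate point to be the bookkeeping for this last naturality check, namely confirming that composition assembles into a map of simplicial sets rather than a mere levelwise pairing; but the canonical comonoid structure forces every required diagram to commute uniformly, and full details are carried out in \cite{mr}.
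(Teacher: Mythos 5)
Your construction is correct and is essentially the paper's own route: the paper states this proposition without proof, deferring to \cite{mr}, and the enrichment intended there is exactly the one you build---$\Map_{\pder^{st}}(\mathbb D,\mathbb E)_n=\Hom_{\pder^{st}}(D_{[n]}\times\mathbb D,\mathbb E)$ with simplicial operators induced by the cosimplicial object $D_{[\bullet]}$ and composition defined via the diagonal, using the canonical comonoid structure that every object of a cartesian category carries. Your verification of the axioms (coassociativity and counitality of the diagonal, compatibility of composition with the operators because each $D_\alpha$ is automatically a comonoid map, and $D_{[0]}$ being terminal so that degree zero recovers $\Hom_{\pder^{st}}(\mathbb D,\mathbb E)$) is sound and complete.
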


We warn the reader that $\Map_{\pder^{st}}(\mathbb D,\mathbb E)_\bullet$ is used to denote the simplicial enrichment, and $\Map_{\pder^{st}}(\mathbb D,\mathbb E)$ is used to denote the categorical enrichment. The following remark clarifies the relationship between them.

\begin{rmk}
For any prederivators $\mathbb D$ and $\mathbb E$, there is a  canonical map
$$\alpha\colon\Map_{\pder^{st}}(\mathbb D,\mathbb E)_\bullet\to N\Map_{\pder^{st}}(\mathbb D,\mathbb E).$$
The simplicial map $\alpha$ is induced on the set of $n$-simplices by postcomposition with the \emph{underlying diagram} functors
$$dia_J^{[n]}\colon(\mathbb E^{D_{[n]}})(J)\to\mathbb E(J)^{[n]},$$
which are natural in $J$ and assemble into a map of prederivators $\mathbb E^{D_{[n]}}\to\mathbb E(\bullet)^{[n]}$. We refer the reader to \cite{groth} for more details on the underlying diagram functors.

When $\mathbb E=D_K$ is represented by a category, the underlying diagram functors can be checked to be isomorphisms and the two enrichements agree, in the sense that $\alpha$ becomes an isomorphism
$$\alpha\colon\Map_{\pder^{st}}(\mathbb D,D_K)_\bullet\cong N\Map_{\pder^{st}}(\mathbb D,D_K).$$

This is not the case in general, even when $\mathbb E=\rep{X}$ is the prederivator associated to a quasi-category $X$. For instance, at the level of $1$-simplices the underlying diagram functor can be identified with
$$\ho(X^{J\times[1]})\to\ho(X^J)^{[1]},$$
and the corresponding map $\alpha$
is not bijective.
\end{rmk}

The enriched Yoneda Lemma from \cite[\textsection2.4]{Kelly} specializes to the following.

\begin{prop}
\label{yonedalemma}
There is a natural isomorphism of categories
$$\Map_{\pder^{st}}(D_{J},\mathbb E)\cong \mathbb E(J).$$
In particular, the isomorphisms induce natural bijections at the level of objects
$$\Hom_{\pder^{st}}(D_{J},\mathbb E)\cong\Ob(\mathbb E(J)).$$
\end{prop}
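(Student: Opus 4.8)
The plan is to recognize the statement as a direct instance of the $\cat$-enriched Yoneda lemma of \cite[\textsection2.4]{Kelly}, applied to $\pder^{st}$ viewed as a category of enriched presheaves. Since $\cat$ is cartesian closed, it is enriched over itself, and the indexing $2$-category $\catfin$ is then a $\cat$-enriched category whose hom-objects are the functor categories $\catfin(K,J)=J^K$. Under this identification a $2$-functor $\mathbb E\colon\catfin^{\op}\to\cat$ is precisely a $\cat$-enriched presheaf on $\catfin$, and the $2$-category structure on $\pder^{st}$ recalled above --- whose hom-categories $\Map_{\pder^{st}}(\mathbb D,\mathbb E)$ have strict natural transformations as objects and modifications as morphisms --- is exactly the hom-object of the $\cat$-enriched functor category $[\catfin^{\op},\cat]$. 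The first step is therefore to record that, for $J\in\catfin$, the representable prederivator $D_J$ of \cref{yonedaembedding} agrees with the enriched representable $\catfin(-,J)$, since $D_J(K)=J^K=\catfin(K,J)$ as categories, naturally in $K$.

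With this translation in place, the enriched Yoneda lemma supplies an isomorphism in $\cat$
$$\Map_{\pder^{st}}(D_J,\mathbb E)=[\catfin^{\op},\cat]\bigl(\catfin(-,J),\mathbb E\bigr)\cong\mathbb E(J),$$
natural in both $J$ and $\mathbb E$. I would describe the isomorphism explicitly to make the naturality transparent: the forward functor is evaluation at the identity, sending a strict natural transformation $\eta\colon D_J\Rightarrow\mathbb E$ to the object $\eta_J(\id_J)\in\Ob(\mathbb E(J))$, and sending a modification $m\colon\eta\Rrightarrow\eta'$ to the component $m_J(\id_J)$, which is a morphism $\eta_J(\id_J)\to\eta'_J(\id_J)$ in $\mathbb E(J)$. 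The inverse sends an object $x\in\mathbb E(J)$ to the transformation $\widehat x$ whose component $\widehat x_K\colon J^K\to\mathbb E(K)$ acts on an object $f\colon K\to J$ by $\widehat x_K(f):=\mathbb E(f)(x)$ and on a morphism of $J^K$ via the $2$-functoriality of $\mathbb E$; a morphism $\phi\colon x\to y$ of $\mathbb E(J)$ is carried to the evident modification. Checking that these assignments are mutually inverse functors, natural in $J$ and $\mathbb E$, is exactly the content of \cite[\textsection2.4]{Kelly}.

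The ``in particular'' clause then follows by applying the object functor $\Ob\colon\cat\to\set$ to the isomorphism of categories, using that the objects of the hom-category $\Map_{\pder^{st}}(D_J,\mathbb E)$ are precisely the morphisms of the underlying category, so that $\Ob\,\Map_{\pder^{st}}(D_J,\mathbb E)=\Hom_{\pder^{st}}(D_J,\mathbb E)$. The main point requiring care --- rather than a genuine obstacle --- is the matching of enrichment conventions: one must confirm that modifications really are the $2$-cells of the enriched functor category, and that it is the \emph{strict} $2$-functoriality of $\mathbb E$ that makes the inverse assignment $x\mapsto\widehat x$ well defined on the level of $2$-cells, so that one obtains an isomorphism of categories and not merely a bijection on objects. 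One should also note that $J\in\catfin$ is needed for $D_J$ to be a genuine enriched representable, which is automatic since $\mathbb E(J)$ only makes sense for such $J$.
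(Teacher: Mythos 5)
Your proposal is correct and follows exactly the paper's route: the paper proves this proposition simply by observing that the $\cat$-enriched Yoneda lemma of \cite[\textsection2.4]{Kelly} specializes to it, which is precisely your argument (your explicit description of the unit-evaluation isomorphism and the passage to objects via $\Ob$ just unpacks details the paper leaves implicit).
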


Given that $\Delta$ is a small category, that $\sset$ is locally small and that $\pder^{st}$ is cocomplete by \cref{limitsinpder}, \cite[Construction 1.5.1]{RiehlCHT} specializes to the following adjunction.

\begin{const}
\label{mainadjunction}
The restriction $D_{[\bullet]}\colon\Delta\subset\cat\to\pder^{st}$ of the Yoneda embedding is a cosimplicial prederivator, and therefore induces an adjunction
$$L\colon\sset\rightleftarrows\pder^{st}\colon R.$$
The left adjoint $LX$ is the left Kan extension of $D_{[\bullet]}\colon\Delta\subset\cat\to\pder^{st}$ along the Yoneda embedding $\Delta \subset \sset$, explicitly
$$LX=\int^{[n]\in\Delta}\Hom_{\sset}(\Delta[n],X)\cdot D_{[n]}=\colim_{\Delta[n_i]\to X} D_{[n_i]},$$
and the right adjoint $R\mathbb D$, which we call the \emph{underlying simplicial set} of $\mathbb D$, is defined by 
$$(R\mathbb D)_n:=\Hom_{\pder^{st}}(D_{[n]},\mathbb D)\cong\Ob(\mathbb D([n])).$$
\end{const}

We now collect three properties of the functors $R$ and $L$ that will be needed later.

\begin{prop}
\label{adjointsofR}
The functor $R$ admits a right adjoint, and in particular it preserves colimits.
\end{prop}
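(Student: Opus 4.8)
The plan is to construct a right adjoint $G\colon\sset\to\pder^{st}$ by hand, after which preservation of colimits follows formally since $R$ will then be a left adjoint. Recall from \cref{mainadjunction} and \cref{yonedalemma} that $R$ is computed degreewise by $(R\mathbb D)_n\cong\Ob(\mathbb D([n]))$, and more generally that $\Hom_{\pder^{st}}(D_J,\mathbb D)\cong\Ob(\mathbb D(J))$; in particular $(RD_J)_n=\Ob(J^{[n]})=(NJ)_n$, so $RD_J\cong NJ$. Guided by this, I would define, for a simplicial set $X$, the prederivator $GX$ by
$$GX(J):=\operatorname{codisc}\bigl(\Hom_{\sset}(NJ,X)\bigr),$$
the codiscrete (chaotic) category on the set of simplicial maps $NJ\to X$, made contravariantly functorial in $J$ by precomposition along the nerve $N(-)$. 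Because a codiscrete category has a unique morphism between any ordered pair of objects, a functor into $GX(J)$ is determined freely and uniquely by its effect on objects, and the $2$-functoriality of $GX$ together with all $2$-categorical coherences hold automatically.

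The core of the argument is to produce a natural bijection
$$\Hom_{\pder^{st}}(\mathbb D,GX)\cong\Hom_{\sset}(R\mathbb D,X).$$
By the previous observation, a strict natural transformation $\mathbb D\to GX$ is the same as a family of functions $\Ob(\mathbb D(J))\to\Hom_{\sset}(NJ,X)$, natural in $J\in\catfin$. Restricting such a family along the inclusion $\Delta\subset\catfin$ and using $N[n]=\Delta[n]$ recovers, in degree $n$, a function $\Ob(\mathbb D([n]))=(R\mathbb D)_n\to X_n$ compatible with the simplicial structure, i.e.\ a map $R\mathbb D\to X$. Conversely, given a map $\psi\colon R\mathbb D\to X$ and an object $d\in\Ob(\mathbb D(J))$, one assigns to each $n$-simplex $\sigma\colon[n]\to J$ of $NJ$ the element $\psi_n\bigl(\mathbb D(\sigma)(d)\bigr)\in X_n$; functoriality of $\mathbb D$ and naturality of $\psi$ make this a simplicial map $NJ\to X$, natural in $J$. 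These two assignments are mutually inverse.

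Granting this, $R\dashv G$, so $R$ admits a right adjoint and therefore preserves all small colimits. The step I expect to require the most care is checking that the two assignments above are genuinely inverse and natural in both $\mathbb D$ and $X$ — concretely, that the simplicial identities on a map $R\mathbb D\to X$ translate precisely into the naturality of a transformation $\mathbb D\to GX$. The essential simplification is that the targets $GX(J)$ are codiscrete, which forces all morphism-level and $2$-cell data and leaves content only on objects. As a non-constructive alternative, one could instead verify directly that $R$ preserves colimits — combining the pointwise computation of colimits in $\pder^{st}$ from \cref{limitsinpder}, the fact that $\Ob\colon\cat\to\set$ preserves colimits as a left adjoint to $\operatorname{codisc}$, and the levelwise computation of colimits in $\sset$ — and then invoke the adjoint functor theorem, using that both $\pder^{st}$ and $\sset$ are locally presentable.
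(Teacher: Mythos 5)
Your proof is correct, but it takes a genuinely different route from the paper. The paper never constructs the right adjoint by an explicit formula: it factors $R$ as the composite
$$\pder^{st}\to\cat^{\Delta^{\op}}\to\set^{\Delta^{\op}}=\sset,$$
where the first functor is restriction along the inclusion of the discrete $2$-category $\Delta^{\op}$ into $\catfin^{\op}$ (which has a right adjoint given by the enriched right Kan extension, citing Kelly's Theorem 4.50 and the smallness of $\catfin$ from \cref{catfinsmall}), and the second is postcomposition with $\Ob\colon\cat\to\set$ (whose right adjoint is postcomposition with $\operatorname{codisc}$); the right adjoint to $R$ is then the composite of these two right adjoints, with no computation needed. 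Your construction is the ``unpacked'' version of exactly that composite: one can check via the enriched end formula that $\mathrm{Ran}_{\Delta^{\op}\hookrightarrow\catfin^{\op}}(\operatorname{codisc}\circ X)(J)\cong\operatorname{codisc}\bigl(\Hom_{\sset}(NJ,X)\bigr)$, which is your $GX(J)$, so the two adjoints agree (as they must, by uniqueness of adjoints). What the paper's approach buys is brevity and safety: the enriched Kan extension machinery silently takes care of all the $2$-categorical coherence and $2$-naturality bookkeeping that you have to dispatch by hand via the observation that codiscrete targets trivialize all morphism-level and $2$-cell data --- an observation that is correct and is indeed the crux of your verification, but which you should carry out carefully (in particular, $2$-naturality of maps $\mathbb D\to GX$, not just $1$-naturality, must be seen to be automatic). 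What your approach buys is a self-contained argument with an explicit, usable description of the right adjoint $U$, which the paper leaves implicit. Your closing alternative (pointwise colimit preservation plus the adjoint functor theorem) also works, but it requires knowing that $\pder^{st}$ is locally presentable, a fact the paper never establishes; as stated it proves colimit preservation directly but would need that extra input to recover the existence of the right adjoint, which is the actual content of the proposition.
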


\begin{proof}
We first observe that the functor $R$ can be expressed as the following composite
$$\pder^{st}\to\cat^{\Delta^{\op}}\to\set^{\Delta^{\op}}=\sset,$$
where the first functor is the restriction along the inclusion of the discrete $2$-category $\Delta^{\op}$ into the full $2$-subcategory $\catfin^{\op}$ of $\cat$, and the second functor is induced by the functor $\Ob\colon\cat\to\set$.
In particular, since we are considering $\Delta^{\op}$ as a discrete $2$-category, the category $\cat^{\Delta^{\op}}$ of ordinary functors coincides with the category of $2$-functors.

Knowing from \cref{catfinsmall} that $\catfin$ is a small $2$-category, we can evoke \cite[Thorem 4.50]{Kelly} to say that the restriction along $\Delta^{\op}\to\catfin^{\op}$ admits a right $1$-categorical adjoint, given by the enriched right Kan extension.
The adjoint pair
$$\Ob\circ-\colon\cat^{\Delta^{\op}}\rightleftarrows\sset\colon codisc\circ-$$
and the adjoint pair
$$(\Delta^{\op}\hookrightarrow\catfin^{\op})^*\colon\pder^{st}\rightleftarrows\cat^{\Delta^{\op}}\colon Ran_{\Delta^{\op}\hookrightarrow\catfin^{\op}}$$
compose to an adjoint pair
$$R\colon\pder^{st}\rightleftarrows\cat^{\Delta^{\op}}\rightleftarrows\sset\colon U,$$
as desired.
\end{proof}

\begin{rmk}
\label{rightinverse3}
For any $K\in\cat$ there is a natural isomorphism of simplicial sets
$$RD_K\cong NK.$$
\end{rmk}

The functor $R$ is also a left inverse for $L$.

\begin{prop}
\label{rightinverse2}
For any simplicial set $X$, the unit of the adjunction from \cref{mainadjunction} gives an isomorphism
$$\eta_X\colon X\cong RL(X).$$
In particular, the functor $L$ is fully faithful and the functor $R$ is a left inverse for $L$.
\end{prop}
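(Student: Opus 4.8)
The plan is to analyze the unit $\eta_X \colon X \to RL(X)$ directly via the explicit colimit formula for $L$ given in Construction~\ref{mainadjunction}. Since $L$ is a left adjoint (a left Kan extension), it preserves colimits, and every simplicial set is canonically the colimit of its simplices. The functor $R$ also preserves colimits by \cref{adjointsofR}. So the strategy is first to verify the claim on representables $X = \Delta[n]$, and then to extend to all simplicial sets by a colimit argument, using that both $R$ and $L$ commute with the relevant colimits.

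First I would treat the representable case. When $X = \Delta[n]$, the left Kan extension formula collapses, since $\Delta[n]$ is itself a representable, giving $L(\Delta[n]) \cong D_{[n]}$. Then by the formula for $R$ in Construction~\ref{mainadjunction}, we compute
\begin{align*}
(RL(\Delta[n]))_m = (RD_{[n]})_m \cong \Ob(D_{[n]}([m])) = \Ob([n]^{[m]}) = \Hom_{\cat}([m],[n]),
\end{align*}
which is exactly $(\Delta[n])_m = \Hom_{\Delta}([m],[n])$. One then checks that under these identifications the unit map $\eta_{\Delta[n]}$ is the identity, i.e.\ that the triangle identity forces $\eta_{\Delta[n]}$ to be the canonical isomorphism. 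This is essentially a restatement of \cref{rightinverse3} together with the Yoneda lemma (\cref{yonedalemma}), since $RD_{[n]} \cong N[n] \cong \Delta[n]$.

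Next I would bootstrap to a general simplicial set $X$. Write $X \cong \colim_{\Delta[n_i]\to X} \Delta[n_i]$ as the colimit over its category of simplices. Since $L$ preserves colimits (as a left adjoint) and $R$ preserves colimits by \cref{adjointsofR}, the composite $RL$ preserves colimits, so
\begin{align*}
RL(X) \cong \colim_{\Delta[n_i]\to X} RL(\Delta[n_i]) \cong \colim_{\Delta[n_i]\to X} \Delta[n_i] \cong X,
\end{align*}
where the middle isomorphism is the representable case applied levelwise. The point is that the unit $\eta$ is a natural transformation $\id \Rightarrow RL$, so it is compatible with these colimit decompositions; since $\eta$ is an isomorphism on each representable $\Delta[n_i]$, it is an isomorphism on the colimit $X$ as well. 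The final clause about $L$ being fully faithful is then immediate: a left adjoint whose unit is an isomorphism is fully faithful, and $R$ being a left inverse is the statement $RL \cong \id$ just established.

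The main obstacle I anticipate is not any single computation but rather the bookkeeping needed to confirm that $\eta$ genuinely corresponds to the canonical identification under the colimit decomposition, rather than merely that $RL(X)$ is \emph{abstractly} isomorphic to $X$. One must ensure naturality of $\eta$ is used correctly so that the levelwise isomorphisms on representables glue to the unit itself, and that the identifications $(RL(\Delta[n]))_m \cong (\Delta[n])_m$ are the ones induced by the adjunction. Verifying that the unit on representables is the canonical iso---most cleanly via the triangle identities combined with the enriched Yoneda lemma---is where the only real care is required; everything else is formal.
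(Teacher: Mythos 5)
Your proposal is correct and follows essentially the same route as the paper's proof: establish the isomorphism on representables via the identification $(RL\Delta[n])_m \cong \Hom_{\cat}([m],[n]) \cong \Delta[n]_m$, then extend to arbitrary $X$ by writing $X$ as a colimit of representables and using that both $L$ (as a left adjoint) and $R$ (by \cref{adjointsofR}) preserve colimits, with naturality of $\eta$ gluing the levelwise isomorphisms to the unit itself. The ``bookkeeping'' concern you flag is exactly what the paper addresses with its commuting square relating $\phi$, $\phi'$, and $\colim\eta_{\Delta[n_i]}$ to $\eta_X$.
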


\begin{proof}
We first prove that the unit of a representable simplicial set,
\[\eta_{\Delta[n]}\colon \Delta[n]\rightarrow RL(\Delta[n])\] 
is an isomorphism.
The component $m$ of the unit map,
\[{\eta_{\Delta[n]}}_m\colon \Delta[n]_m\rightarrow(RL\Delta[n])_m,\]
can be identified with the canonical isomorphism
\[
\begin{array}{rcl}
\Delta[n]_m&=&\Hom_{\sset}(\Delta[m], \Delta[n])\\
&\cong&\Hom_{\cat}([m],[n])\\
&\cong&\Hom_{\pder^{st}}(D_{[m]}, D_{[n]}))\\
&=&\Hom_{\pder^{st}}(D_{[m]}, L(\Delta[n]))\\
&=&(RL\Delta[n])_m.
\end{array}\]
As a consequence, the unit $\eta_{\Delta[n]}$ is an isomorphism.

We now show that the unit $\eta_X$ is an isomorphism for any simplicial set $X$.
Given the canonical identification
$$\phi\colon \colim_{\Delta[n_i]\to X} \Delta[n_i]\cong X$$
and the fact that both $R$ and $L$ respect colimits, we obtain a further identification
$$\phi'\colon \xymatrix{\colim_{\Delta[n_i]\to X}(RL \Delta[n_i])\cong RL(\colim_{\Delta[n_i]\to X} \Delta[n_i])\ar[r]_-{\cong}^-{RL\phi}&
RLX }.$$
Using the universal property of colimits and the naturality of $\eta$, a straightforward check shows that the following diagram commutes
$$\xymatrix{
\colim_{\Delta[n_i]\to X} \Delta[n_i]\ar[d]^-{\phi}_{\cong}\ar[rr]^-{\colim\eta_{\Delta[n_i]}}&& \colim_{\Delta[n_i]\to X} RL\Delta[n_i],\ar[d]^-{\cong}_-{\phi'}\\
X\ar[rr]_-{\eta_X}&&RLX
}$$
and in particular there is an isomorphism
\[\eta_X \cong \colim_{\Delta[n_i]\to X}\eta_{\Delta[n_i]}.\]
The right hand map is an isomorphism, given that it is a colimit of isomorphisms, and so we conclude that $\eta_X$ is an isomorphism as well.
\end{proof}

Unlike many other left adjoints arising from cosimplicial objects (such as geometric realization or homotopy category of simplicial sets), the functor $L$ does not respect products, as shown by the following example. This obstruction will play an important role in a later discussion on the $(\infty,2)$-categorical nature of $\pder^{st}$. See Digression \ref{finaldigression} for more details.

\begin{ex}
\label{Ldoesnotrespectproducts}
We show that the canonical map
$$L(\Delta[1]\times\Delta[1])\to L(\Delta[1])\times L(\Delta[1])$$
is not an isomorphism of prederivators.

First, by construction of $L$ and by \cref{Dcommuteswithproduct} we observe that
$$L(\Delta[1])\times L(\Delta[1])\cong D_{[1]}\times D_{[1]}\cong D_{[1]\times[1]},$$
while, using the fact that $\Delta[1]\times\Delta[1]\cong\Delta[2]\amalg_{\Delta[1]}\Delta[2]$, we obtain that
$$\begin{array}{rcl}
L(\Delta[1]\times\Delta[1])&\cong&L(\Delta[2]\amalg_{\Delta[1]}\Delta[2])\\
&\cong&L(\Delta[2])\amalg_{L(\Delta[1])}L(\Delta[2])\\
&\cong&D_{[2]}\amalg_{D_{[1]}}D_{[2]}.\\
\end{array}$$
Under these identifications, the comparison map can be written in the form
$$D_{[2]}\amalg_{D_{[1]}}D_{[2]}\to D_{[1]\times[1]}.$$
If we denote by $\Gamma$ the span shape category $\bullet\leftarrow\bullet\rightarrow\bullet$, it is enough to show that
the induced map
$$f\colon \Ob((D_{[2]}\amalg_{D_{[1]}}D_{[2]})(\Gamma))\to\Ob(D_{[1]\times[1]}(\Gamma))$$
which can be rewritten as
$$f\colon\Ob([2]^{\Gamma})\amalg_{\Ob([1]^{\Gamma})}\Ob([2]^{\Gamma})\to\Ob(([1]\times[1])^{\Gamma})$$
is not surjective.
To this end, we observe that the span shape $\Gamma\to[1]\times[1]$, whose image is $(0,1)\leftarrow(0,0)\rightarrow(1,0)$,
defines an object of the category $([1]\times[1])^{\Gamma}$ that is not in the image of $f$. Indeed, by unravelling the definitions, one can see that any diagram $d'\colon\Gamma\to[1]\times[1]$ lying in the image of $f$ has to factor through one of the non degenerate $2$-simplices of $[1]\times[1]$.
\end{ex}

\section{Quasi-representable prederivators}
\label{sectionquasirepresentable}

The Yoneda embedding $D\colon\cat\to\pder^{st}$ from \cref{yonedaembedding} provides a natural way to produce a prederivator from any category. There is in fact a canonical construction, which appears and is an object of study in several sources such as \cite{carlson,groth,GPS2,lenz,RV5}, to extend the Yoneda embedding along the nerve inclusion $N\colon\cat\to\sset$, and produce a prederivator from any quasi-category (and in fact from any simplicial set). This construction makes use of the ``homotopy category'' of a simplicial set.

\begin{rmk}
\label{homotopycategory}
We recall that the nerve functor admits a left adjoint $\ho\colon\sset\to\cat$, which acts as a $1$-truncation, see e.g. \cite[Definition 1.1.10]{RVbook} sending a simplicial set to its \emph{homotopy category}. By \cite[Lemma 1.1.12]{RVbook}, when $X$ is a quasi-category the homotopy category $\ho(X)$ has as its set of objects $\Ob(\ho(X)):=X_0$, and as its set of morphisms the homotopy classes of $1$-simplices of $X$.
As defined in \cite[Definition 1.1.7.]{RVbook}, two $1$-simplices $f$ and $g$ from $x$ to $y$ are homotopic if there exists a $2$-simplex $\sigma$ such that
$$d_0(\sigma)=f,d_1(\sigma)=g\text{ and }d_2(\sigma)=s_0(x).$$
This set of morphisms can be described as the coequalizer
$$\Mor(\ho(X)):=\coeq\left(X_0\times^{d_2}_{X_1}X_2\rightrightarrows X_1\right)$$
of the structure maps induced by the faces $d_0,d_1\colon X_2\to X_1$.
\end{rmk}

\begin{defn}
For any simplicial set $X$, its \emph{homotopy prederivator} is the prederivator $\rep{X}$ that is defined by 
$$\rep{X}(J):=\ho(X^{NJ}).$$
Upon restricting the domain, this construction defines a functor $\mathbb Ho\colon\qcat\to\pder^{st}$.
\end{defn}

\begin{rmk}
\label{DversusHo}
Using the isomorphism $\ho(NJ)\cong J$ for any $J\in\cat$, one can see that there is an isomorphism of prederivators
$$D_J\cong\rep{NJ}.$$
\end{rmk}

In \cite{carlson}, Carlson observed that this functor is simplicial when the category $\pder^{st}$ is endowed with the simplicial structure recalled in \cref{pDersimplicial}, and he proves that it is a simplicial embedding.

\begin{thm}[{\cite[Theorem 2.1]{carlson}}]
\label{theoremcarlson}
The functor $\mathbb Ho\colon\qcat\to\pder^{st}$ is simplicially fully faithful, i.e., it induces isomorphisms of simplicial sets
$$\mathbb Ho\colon\Map_{\qcat}(X,X')_\bullet\cong\Map_{\pder^{st}}(\rep{X},\rep{X'})_\bullet.$$
\end{thm}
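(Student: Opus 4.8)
The plan is to reduce the simplicial statement to the \emph{ordinary} full faithfulness of $\mathbb{Ho}$, i.e.\ to a natural bijection $\Hom_{\sset}(Y,Z)\cong\Hom_{\pder^{st}}(\rep{Y},\rep{Z})$ for quasi-categories $Y,Z$, and then to feed this into a chain of natural isomorphisms computing the mapping spaces. For the reduction, recall from \cref{pDersimplicial} that $\Map_{\pder^{st}}(\rep{X},\rep{X'})_n=\Hom_{\pder^{st}}(D_{[n]}\times\rep{X},\rep{X'})$. Using the cartesian closure of \cref{pDercartesianclosed} I would rewrite this as $\Hom_{\pder^{st}}(\rep{X},(\rep{X'})^{D_{[n]}})$ and then identify the internal hom: for every $J$,
\[
(\rep{X'})^{D_{[n]}}(J)=\Map_{\pder^{st}}(D_J\times D_{[n]},\rep{X'})\cong\Map_{\pder^{st}}(D_{J\times[n]},\rep{X'})\cong\rep{X'}(J\times[n]),
\]
using \cref{Dcommuteswithproduct} and the Yoneda isomorphism of \cref{yonedalemma}. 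Since $N(J\times[n])\cong NJ\times\Delta[n]$ and $X'^{NJ\times\Delta[n]}\cong(X'^{\Delta[n]})^{NJ}$, the right-hand side is $\ho((X'^{\Delta[n]})^{NJ})=\rep{X'^{\Delta[n]}}(J)$. Hence $(\rep{X'})^{D_{[n]}}\cong\rep{X'^{\Delta[n]}}$ naturally in $[n]$.

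Granting ordinary full faithfulness and applying it to the quasi-categories $Y=X$ and $Z=X'^{\Delta[n]}$ (the latter is a quasi-category since $X'$ is) then yields
\[
\Map_{\pder^{st}}(\rep{X},\rep{X'})_n\cong\Hom_{\pder^{st}}(\rep{X},\rep{X'^{\Delta[n]}})\cong\Hom_{\sset}(X,X'^{\Delta[n]})\cong\Hom_{\sset}(\Delta[n]\times X,X')=\Map_{\qcat}(X,X')_n,
\]
the last exponential adjunction taking place in $\sset$. I would then check that each step is natural in $[n]$, so that the composite is an isomorphism of simplicial sets, and that it agrees with the map induced by the simplicial functor $\mathbb{Ho}$: an $n$-simplex $g\colon\Delta[n]\times X\to X'$ is sent to $\rep{g}$ precomposed with the isomorphism $D_{[n]}\times\rep{X}\cong\rep{\Delta[n]\times X}$, which is exactly the inverse of the chain above.

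The core of the argument is therefore the ordinary full faithfulness. \emph{Faithfulness} is formal: the underlying-simplicial-set functor $R$ of \cref{mainadjunction} satisfies $R\rep{W}\cong W$ for any quasi-category $W$ — immediate from $(R\mathbb{D})_n=\Ob(\mathbb{D}([n]))$ together with $\Ob\ho(W^{\Delta[n]})=W_n$ — and $R\rep{f}=f$ under this identification, so $f\mapsto\rep{f}$ is split injective. For \emph{fullness}, given a strict natural transformation $\alpha\colon\rep{Y}\to\rep{Z}$ I would set $f:=R\alpha\colon Y\to Z$ and prove $\alpha=\rep{f}$ by checking that $\alpha_J=\rep{f}_J\colon\ho(Y^{NJ})\to\ho(Z^{NJ})$ on objects and on morphisms, for every $J$. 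On objects, an object of $\ho(Y^{NJ})$ is a map $x\colon NJ\to Y$; naturality of $\alpha$ along each simplex $j\colon[n]\to J$ gives $\rep{Z}(j)(\alpha_J(x))=\alpha_{[n]}(x\circ Nj)=f\circ x\circ Nj$, and since a map out of $NJ$ is determined by its restrictions along all simplices, $\alpha_J(x)=f\circ x=\rep{f}_J(x)$.

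The main obstacle is fullness on \emph{morphisms}, where one must cope with the fact that morphisms of $\ho(Y^{NJ})$ are homotopy classes of $1$-simplices rather than honest maps. Here I would exploit the underlying diagram functors $dia_J^{[1]}\colon\mathbb{E}(J\times[1])\to\mathbb{E}(J)^{[1]}$, which are natural in the prederivator $\mathbb{E}$ and send a $1$-simplex of $Y^{NJ}$ to its homotopy class. Given a morphism $\phi$ of $\ho(Y^{NJ})$, choose a lift $\tilde\phi\in\Ob(\rep{Y}(J\times[1]))$ with $dia(\tilde\phi)=\phi$, which is possible since $dia$ is surjective on objects (every morphism of a homotopy category is the class of some $1$-simplex) and $J\times[1]\in\catfin$. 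Naturality of $dia$ with respect to $\alpha$ gives $\alpha_J(\phi)=dia_Z(\alpha_{J\times[1]}(\tilde\phi))$; applying the already-established agreement of $\alpha$ and $\rep{f}$ on objects at $J\times[1]$ and then the same naturality square for $\rep{f}$ yields $\alpha_J(\phi)=dia_Z(\rep{f}_{J\times[1]}(\tilde\phi))=\rep{f}_J(\phi)$. Thus $\alpha_J=\rep{f}_J$ on morphisms as well, completing fullness. The delicate points to nail down are the surjectivity and naturality of the underlying diagram functors, and the verification that the reduction chain is genuinely the map induced by $\mathbb{Ho}$.
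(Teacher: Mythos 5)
The paper does not actually prove this statement: it is imported as a black box, cited as \cite[Theorem 2.1]{carlson}, so there is no internal proof to compare against, and your self-contained argument is the genuinely different route here. As far as I can check it is correct. The reduction step (i) is assembled entirely from results the paper does prove: $\Map_{\pder^{st}}(\rep{X},\rep{X'})_n\cong\Hom_{\pder^{st}}(\rep{X},(\rep{X'})^{D_{[n]}})\cong\Hom_{\pder^{st}}(\rep{X},\rep{X'^{\Delta[n]}})$ follows from \cref{pDercartesianclosed}, \cref{Dcommuteswithproduct}, \cref{yonedalemma} and \cref{Drespectscotensors}, and $X'^{\Delta[n]}$ is indeed a quasi-category. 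The real content is step (ii), ordinary full faithfulness via $f:=R\alpha$, and your handling of the morphism level is the crucial point and it works: the underlying diagram functor $dia^{[1]}_J$ for $\rep{Y}$ is surjective on objects because every morphism of the homotopy category of the quasi-category $Y^{NJ}$ is the class of some $1$-simplex, and it is natural in the prederivator because morphisms of $\pder^{st}$ are $\cat$-enriched (hence $2$-)natural transformations, so they commute not only with the functors $\rep{Y}(i_0),\rep{Y}(i_1)$ but with the image of the $2$-cell $i_0\Rightarrow i_1\colon J\to J\times[1]$ out of which $dia^{[1]}_J$ is built; you should also record explicitly that $J\times[1]\in\catfin$ whenever $J\in\catfin$, which is what entitles you to evaluate there. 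It is worth noting that this mechanism — morphisms of $\rep{Y}(J)$ being recovered from objects of $\rep{Y}(J\times[1])$ — is exactly what the paper later axiomatizes as Condition (2) of \cref{definitionrepresentable} and exploits in \cref{testonfirstrow}; your fullness argument is an on-the-nose version of that lemma for prederivators already known to be homotopy prederivators, while your faithfulness argument is the paper's \cref{rightinverse}. The two verifications you flag at the end — naturality in $[n]$ of the chain, and that the composite bijection is inverse to the map induced by the simplicial structure of $\mathbb Ho$, namely $g\mapsto\rep{g}$ precomposed with the canonical isomorphism $D_{[n]}\times\rep{X}\cong\rep{\Delta[n]\times X}$ (which exists because $\ho$ preserves finite products of quasi-categories) — are routine but are genuinely part of the statement, since the theorem asserts that the map induced by $\mathbb Ho$, not merely some abstract map, is an isomorphism; in a full write-up these chases should appear rather than be asserted.
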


The functor $R$ provides a left simplicial inverse for $\mathbb Ho$.

\begin{lem}
\label{rightinverse}
The functor $R$ is a left inverse for $\mathbb Ho$, i.e., for any simplicial set $X$ there is a natural isomorphism of simplicial sets
$$R\rep{X}\cong X.$$
\end{lem}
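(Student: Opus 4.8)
The plan is to compute the simplicial set $R\rep{X}$ one degree at a time and produce a natural bijection with $X_n$. First I would unwind the definition of the underlying simplicial set from \cref{mainadjunction} together with the Yoneda identification of \cref{yonedalemma}, giving
$$(R\rep{X})_n\cong\Ob(\rep{X}([n]))=\Ob(\ho(X^{N[n]})).$$
Since $N[n]=\Delta[n]$, the right-hand side is $\Ob(\ho(X^{\Delta[n]}))$. The next ingredient is that, for an arbitrary simplicial set $Y$, the object set of its homotopy category is exactly $Y_0$: the left adjoint $\ho$ has $0$-simplices as its objects by construction, so $\Ob(\ho(Y))=Y_0$. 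This extends the description recalled in \cref{homotopycategory}, which was stated for quasi-categories, to all simplicial sets; only the object level is used here, so none of the homotopy relations on $1$-simplices enter. Applying this to $Y=X^{\Delta[n]}$ yields $\Ob(\ho(X^{\Delta[n]}))=(X^{\Delta[n]})_0$.

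Finally I would invoke the cartesian closure of $\sset$ to compute
$$(X^{\Delta[n]})_0=\Hom_{\sset}(\Delta[0],X^{\Delta[n]})\cong\Hom_{\sset}(\Delta[0]\times\Delta[n],X)\cong\Hom_{\sset}(\Delta[n],X)=X_n,$$
where the last step uses $\Delta[0]\times\Delta[n]\cong\Delta[n]$ and the Yoneda lemma for simplicial sets. Composing these bijections gives $(R\rep{X})_n\cong X_n$ in each degree.

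The remaining work, and the only point that needs genuine care, is naturality in $[n]\in\Delta$, so that the degreewise bijections assemble into an isomorphism of simplicial sets $R\rep{X}\cong X$ and moreover into one natural in $X$. Each step above is induced by functoriality in $[n]$ (restriction along maps $[m]\to[n]$, the functor $\ho$, and the exponential adjunction), so I expect this to be a routine check; I would verify that the squares commute on a single generating coface and codegeneracy map. I do not anticipate an obstacle beyond this bookkeeping. One might hope to shortcut the argument by reducing to \cref{rightinverse2}, observing via \cref{DversusHo} that on representables $\rep{\Delta[n]}\cong D_{[n]}\cong L\Delta[n]$; however, $\rep{-}$ does not preserve the colimits presenting a general $X$ (in contrast to $L$), so this reduction does not transpose directly and the degreewise computation above is the cleaner route.
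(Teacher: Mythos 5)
Your proposal is correct and follows essentially the same route as the paper's proof, which is exactly the degreewise chain $(R\rep{X})_n=\Ob(\rep{X}([n]))=\Ob(\ho(X^{\Delta[n]}))=(X^{\Delta[n]})_0=X_n$. Your extra care is even a slight improvement: you explicitly justify $\Ob(\ho(Y))=Y_0$ for an \emph{arbitrary} simplicial set $Y$ (the paper's \cref{homotopycategory} recalls this only for quasi-categories, though the lemma needs it in general), and you flag the naturality check that the paper leaves implicit.
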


\begin{proof}
By definition, we obtain the natural identification
$$(R\rep{X})_n:=\Ob(\rep{X}[n])=\Ob(\ho(X^{\Delta[n]}))=(X^{\Delta[n]})_0=X_n,$$
which yields an isomorphism $R\rep{X}\cong X$, as desired.
\end{proof}

Since $L$ allows us to embed simplicial sets into prederivators, we get the following.

\begin{prop}
\label{pDercotensored}
The simplicial category $\pder^{st}$ from \cref{pDersimplicial} is 
cotensored over $\sset$, with cotensors $\mathbb D^Y:=\mathbb D^{LY}$, given componentwise by
$$\mathbb D^Y(J):=\mathbb D^{LY}(J)\cong\Map_{\pder^{st}}(D_J\times LY,\mathbb D).$$
\end{prop}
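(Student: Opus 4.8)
The plan is to show that the internal-hom object $\mathbb D^{LY}$ corepresents the cotensor, i.e.\ to produce a natural isomorphism
$$\Hom_{\pder^{st}}(\mathbb E,\mathbb D^{LY})\cong\Hom_{\sset}\bigl(Y,\Map_{\pder^{st}}(\mathbb E,\mathbb D)_\bullet\bigr)$$
for every prederivator $\mathbb E$; the componentwise description then requires no separate argument, being exactly the formula of \cref{pDercartesianclosed} applied to the prederivator $LY$.

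Before chasing adjunctions I would record the identity that links the simplicial enrichment of \cref{pDersimplicial} with the internal hom and the right adjoint $R$: for all prederivators $\mathbb D$ and $\mathbb E$ there is an isomorphism of simplicial sets
$$\Map_{\pder^{st}}(\mathbb E,\mathbb D)_\bullet\cong R(\mathbb D^{\mathbb E}).$$
Indeed, by the definitions in \cref{pDersimplicial} and \cref{mainadjunction} one has $\Map_{\pder^{st}}(\mathbb E,\mathbb D)_n=\Ob(\mathbb D^{\mathbb E}([n]))=(R(\mathbb D^{\mathbb E}))_n$, and these bijections are compatible with the simplicial structure, which on both sides is induced by the cosimplicial prederivator $D_{[\bullet]}$.

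The core of the proof is then a chain of natural isomorphisms built from the cartesian closure of \cref{pDercartesianclosed}, the symmetry of the cartesian product in $\pder^{st}$, and the adjunction $L\dashv R$ of \cref{mainadjunction}:
$$\Hom_{\pder^{st}}(\mathbb E,\mathbb D^{LY})\cong\Hom_{\pder^{st}}(\mathbb E\times LY,\mathbb D)\cong\Hom_{\pder^{st}}(LY,\mathbb D^{\mathbb E})\cong\Hom_{\sset}(Y,R(\mathbb D^{\mathbb E}))\cong\Hom_{\sset}(Y,\Map_{\pder^{st}}(\mathbb E,\mathbb D)_\bullet).$$
Tracking naturality of each isomorphism in $\mathbb E$ and in $Y$ is routine and yields the cotensor adjunction; functoriality of $\mathbb D^{(-)}$ in $Y$ and in $\mathbb D$ follows from that of $L$ and of the internal hom.

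The step I expect to require the most care is the passage from this underlying hom-set adjunction to the full statement, i.e.\ to an isomorphism $\Map_{\pder^{st}}(\mathbb E,\mathbb D^{LY})_\bullet\cong\Map_{\sset}(Y,\Map_{\pder^{st}}(\mathbb E,\mathbb D)_\bullet)$ of mapping simplicial sets. Running the computation on $n$-simplices rewrites the right-hand side, via the colimit formula for $L$ in \cref{mainadjunction}, as $\Hom_{\pder^{st}}(L(\Delta[n]\times Y)\times\mathbb E,\mathbb D)$, while the left-hand side reads $\Hom_{\pder^{st}}(D_{[n]}\times LY\times\mathbb E,\mathbb D)$; the two are mediated by the canonical comparison $L(\Delta[n]\times Y)\to D_{[n]}\times LY$, which by \cref{Ldoesnotrespectproducts} is not an isomorphism. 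I would therefore take care to pin down the level at which the cotensoring genuinely holds — the hom-set adjunction delivered by the chain above, which is what is used in the sequel — rather than rely on a naive $\sset$-enriched cotensoring; this caveat is consistent with the later observation that the model structure on $\pder^{st}$ is not enriched over the Joyal model structure.
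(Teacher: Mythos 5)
Your proof is correct and follows essentially the same route as the paper's: the same key identity $\Map_{\pder^{st}}(\mathbb{E},\mathbb{D})_\bullet\cong R(\mathbb{D}^{\mathbb{E}})$, followed by the same chain of isomorphisms combining the cartesian closure of $\pder^{st}$ with the adjunction $L\dashv R$. Your closing caveat---that because $L$ does not preserve products the cotensoring is only verified at the level of underlying hom-sets rather than as an isomorphism of mapping simplicial sets---goes beyond what the paper's proof records, but it is accurate and consistent with the paper's own Digression~\ref{finaldigression}, where the failure of $R$ to be compatible with simplicial cotensors is noted.
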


\begin{proof}
We first note that a direct verification shows that
$$\Map_{\pder^{st}}(\mathbb D,\mathbb E)_\bullet\cong R(\mathbb E^{\mathbb D}).$$
Using the adjunction from \cref{mainadjunction}, the fact that $\pder^{st}$ is cartesian closed from \cref{pDercartesianclosed} and the definition of the candidate cotensor, we then have natural bijections
$$\begin{array}{rclr}
\Hom_{\sset}(X,\Map_{\pder^{st}}(\mathbb D,\mathbb E)_\bullet)&\cong&\Hom_{\sset}(X,R(\mathbb E^{\mathbb D}))\\
&\cong&\Hom_{\pder^{st}}(LX,\mathbb E^{\mathbb D})\\
&\cong&\Hom_{\pder^{st}}(\mathbb D,\mathbb E^{LX})\\
&\cong&\Hom_{\pder^{st}}(\mathbb D,\mathbb E^{X}),\\
\end{array}$$
as desired.
\end{proof}

The following records a certain compatibility of the cotensor construction and the functor $\mathbb Ho$, which will be used later.

\begin{prop}
\label{Drespectscotensors}
The functor $\mathbb Ho$ preserves cotensors with respect to (nerves of) categories, i.e., there are isomorphisms of prederivators
$${\rep{X}}^{NK}\cong\rep{X^{NK}}.$$
\end{prop}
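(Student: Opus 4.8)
The plan is to establish the isomorphism pointwise, for each $J\in\catfin$, and then check naturality; the strategy is to unwind the cotensor $\rep{X}^{NK}(J)$ using the colimit presentation of $L$, which reduces the statement to a compatibility between $\ho$ and certain limits of quasi-categories.

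First I would compute the left-hand side. By \cref{pDercotensored} and \cref{pDercartesianclosed} we have $\rep{X}^{NK}(J)=\Map_{\pder^{st}}(D_J\times L(NK),\rep{X})$. Writing $L(NK)=\colim_{\Delta[m]\to NK}D_{[m]}$ as in \cref{mainadjunction}, and using that $D_J\times(-)$ preserves colimits (being a left adjoint, since $\pder^{st}$ is cartesian closed by \cref{pDercartesianclosed}) together with \cref{Dcommuteswithproduct}, I get $D_J\times L(NK)\cong\colim_{\Delta[m]\to NK}D_{J\times[m]}$. As $\Map_{\pder^{st}}(-,\rep{X})$ sends colimits to limits (maps and modifications out of a colimit are compatible families) and $J\times[m]\in\catfin$, the enriched Yoneda lemma \cref{yonedalemma} then gives
\[\rep{X}^{NK}(J)\cong\lim_{\Delta[m]\to NK}\rep{X}(J\times[m])=\lim_{\Delta[m]\to NK}\ho\bigl(X^{NJ\times\Delta[m]}\bigr),\]
a strict limit in $\cat$ over the category of simplices of $NK$. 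On the other side, since the nerve preserves products, $\rep{X^{NK}}(J)=\ho\bigl((X^{NK})^{NJ}\bigr)=\ho\bigl(X^{NJ\times NK}\bigr)$.

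Setting $W:=X^{NJ}$, which is again a quasi-category, the proposition reduces to the claim that the canonical comparison functor
\[\ho\bigl(W^{NK}\bigr)\longrightarrow\lim_{\Delta[m]\to NK}\ho\bigl(W^{\Delta[m]}\bigr)\]
is an isomorphism; that is, $\ho$ preserves the limit $W^{NK}=\lim_{\Delta[m]\to NK}W^{\Delta[m]}$. This limit does compute $W^{NK}$ before $\ho$ is applied, since $(-)\times\Delta[k]$ preserves the colimit $NK=\colim_{\Delta[m]\to NK}\Delta[m]$, so the two sides agree in every simplicial degree; in degree zero this already shows (via \cref{homotopycategory}) that the comparison is bijective on objects, so the real content is full faithfulness.

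The hard part is this last claim, which I expect to be the main obstacle. The key inputs are that for any monomorphism $A\hookrightarrow B$ the restriction $W^B\to W^A$ is an isofibration (as $W$ is a quasi-category), and that $\ho$ carries pullbacks of quasi-categories along isofibrations to pullbacks of categories. Granting the latter, I would filter $NK$ by its skeleta: because $K$ is homotopy finite, $NK$ has finitely many nondegenerate simplices and is built from the $\Delta[m]\to NK$ in finitely many steps, each a pushout of a coproduct of boundary inclusions $\coprod\partial\Delta[m]\hookrightarrow\coprod\Delta[m]$. Cotensoring $W$ turns each attaching pushout into a pullback of the isofibration $W^{\coprod\Delta[m]}\to W^{\coprod\partial\Delta[m]}$, and applying the compatibility of $\ho$ with such pullbacks inductively identifies $\ho(W^{NK})$ with $\lim_{\Delta[m]\to NK}\ho(W^{\Delta[m]})$. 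The crux is thus to prove that $\ho$ preserves pullbacks along isofibrations, where one uses that two $2$-simplices of a quasi-category filling the same inner horn are homotopic rel that horn, so that incompatible witnessing homotopies in the two factors can be lifted, through the isofibration, to compatible ones. Finally, all the isomorphisms above are natural in $J$, being built from the natural colimit presentation of $L(NK)$ and from the naturality of the comparison map, so they assemble into the desired isomorphism of prederivators.

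As an alternative worth recording, one could instead argue that $\rep{X}^{NK}$ lies in the essential image of $\mathbb{Ho}$: since $R$ preserves colimits by \cref{adjointsofR}, a computation as above gives $R(\rep{X}^{NK})\cong X^{NK}$, and $R$ is a left inverse to $\mathbb{Ho}$ by \cref{rightinverse}, so once one knows the image of $\mathbb{Ho}$ is closed under cotensors by nerves one obtains $\rep{X}^{NK}\cong\rep{R(\rep{X}^{NK})}\cong\rep{X^{NK}}$ immediately. This merely shifts the difficulty onto the closure statement, which again comes down to the same compatibility of $\ho$ with the relevant limits.
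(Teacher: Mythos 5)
Your unwinding of the definitions is correct as far as it goes: reading $\rep{X}^{NK}$ as the simplicial cotensor $\rep{X}^{L(NK)}$ of \cref{pDercotensored}, one does get $\rep{X}^{NK}(J)\cong\lim_{\Delta[m]\to NK}\ho(W^{\Delta[m]})$ with $W=X^{NJ}$, so the proposition under this reading is exactly your crux claim that the comparison $\ho(W^{NK})\to\lim_{\Delta[m]\to NK}\ho(W^{\Delta[m]})$ is an isomorphism. But the lemma you propose to prove this with is false, and so is the crux claim itself, so the gap is not repairable. For the lemma: let $W=K(\mathbb{Z},2)$, a Kan complex with $\pi_0=\ast$, $\pi_1=0$, $\pi_2=\mathbb{Z}$, and pull the isofibration $W^{\Delta[1]}\to W^{\partial\Delta[1]}=W\times W$ back along a vertex $(\ast,\ast)$. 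The pullback is the simplicial loop space $\Omega W$, so $\ho$ of the pullback is a connected groupoid whose automorphism groups are $\pi_1(\Omega W)\cong\pi_2(W)\cong\mathbb{Z}$; on the other hand $W^{\Delta[1]}$ and $W\times W$ are connected and simply connected, so their homotopy categories are indiscrete (exactly one morphism between any two objects), and hence so is the pullback of those homotopy categories. The point is that $\ho$ discards $\pi_2$, while a strict pullback along an isofibration is a homotopy pullback and pushes $\pi_2$ of the base into $\pi_1$ of the fibre; no horn-filling argument can recover this. The crux claim fails for the same reason: with the same $W$, let $K$ be the poset of nonempty proper faces of the $3$-simplex, a finite poset (hence homotopy finite) with $|NK|\cong S^2$. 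Every $\ho(W^{\Delta[m]})$ is indiscrete, so $\lim_{\Delta[m]\to NK}\ho(W^{\Delta[m]})$ is the indiscrete groupoid on $\Hom_{\sset}(NK,W)$ and in particular is connected; but $\ho(W^{NK})$ has $\pi_0(W^{NK})\cong H^2(S^2;\mathbb{Z})\cong\mathbb{Z}$ many isomorphism classes. This also kills your alternative route: the essential image of $\mathbb Ho$ is not closed under the cotensor $(-)^{L(NK)}$, since that closure is equivalent to the statement just refuted.

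The paper's proof is instead a short Yoneda computation under a different reading of the symbol: it takes $\rep{X}^{NK}(J):=\Map_{\pder^{st}}(D_J\times D_{K},\rep{X})$, the internal hom against the representable $D_K\cong\rep{NK}$, not against $L(NK)$. Then \cref{Dcommuteswithproduct} and the enriched Yoneda lemma (\cref{yonedalemma}) give
\[\Map_{\pder^{st}}(D_J\times D_K,\rep{X})\cong\Map_{\pder^{st}}(D_{J\times K},\rep{X})\cong\rep{X}(J\times K)\cong\ho\bigl((X^{NK})^{NJ}\bigr)=\rep{X^{NK}}(J),\]
naturally in $J$. The two readings genuinely differ: $L(NK)\not\cong D_K$ is precisely \cref{Ldoesnotrespectproducts}, and the counterexample above shows that only the paper's reading yields a true statement (consistently, the one place the proposition is used, namely \cref{representableisrepresentable}, needs only $\rep{X}([n]\times J)\cong\rep{X^{NJ}}([n])$, which this reading provides). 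So rather than attempting the simplex-by-simplex gluing over $NK$ --- which is where all of your difficulty concentrates, and which is genuinely impossible --- the proof should replace $L(NK)$ by $D_K$ from the start.
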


\begin{proof}
By direct inspection and using \cref{Dcommuteswithproduct}, we see that there are isomorphisms of categories
$$\begin{array}{rcl}
{\rep{X}}^{NK}(J)&:=&\Map_{\pder^{st}}(D_J\times D_{K},\rep{X})\\
&\cong&\Map_{\pder^{st}}(D_{J\times K},\rep{X})\\
&\cong&\rep{X}(J\times K)\\
&\cong&\ho(X^{NJ\times NK})\\
&\cong&\ho((X^{NK})^{NJ})\\
&\cong& \rep{X}^{D_{NK}}(J)\\
&\cong& \rep{X}^{NK}(J),
\end{array}$$
as desired.
\end{proof}

We now address the question of identifying the essential image of the functor
$$\mathbb Ho\colon\qcat\subset\sset\to\pder^{st}.$$
It follows from the definition of the functor $\mathbb Ho$ that any prederivator of the form $\mathbb D=\rep{X}$ must send finite coproduts to finite products,
$$\mathbb D(NJ\amalg NK)=\ho(X^{NJ\amalg NK})\cong\ho(X^{NJ})\times\ho(X^{NK})=\mathbb D(J)\times\mathbb D(K),$$
but this condition is clearly not sufficient.

In this section, we will show that the prederivators of the form $\mathbb D=\rep{X}$ for some quasi-category $X$ are precisely the prederivators which satisfy the following three conditions, which we suggestively call the ``quasi-representable prederivators''.

\begin{defn}
\label{definitionrepresentable}
A prederivator $\mathbb D\colon\catfin^{\op}\to\cat$ is \emph{quasi-representable} if the following three conditions hold.
\begin{enumerate}
\item For any category $J\in\catfin$, the counit of the adjunction $(L,R)$
$$LNJ\cong LR\rep{NJ}\stackrel{\epsilon_{NJ}}{\longrightarrow}\rep{NJ}\cong D_{J}$$
 induces a bijection
 $$
 \Ob(\mathbb D(J))\cong
  \Hom_{\pder^{st}}(D_J,\mathbb D)\stackrel{\epsilon_{NJ}^*}{\longrightarrow}
  \Hom_{\pder}(LNJ,\mathbb D)
  \cong
 \Hom_{\sset}(NJ,R\mathbb D).$$
\item For any category $J\in\catfin$ the function induced by the underlying diagram functor
$$dia^{[1]}_J\colon\mathbb D([1]\times J)\to \mathbb D(J)^{[1]}$$
at the level of objects realizes a coequalizer diagram
$$\xymatrix@C=2pc{\Ob\left(\mathbb D([0]\times J)\times^{d_2}_{\mathbb D([1]\times J)}\mathbb D([2]\times J)\right) \ar@<+0.7ex>[r] \ar@<-0.5ex>[r] & \Ob(\mathbb D([1]\times J))\ar[r]^-{\Ob(dia^{[1]}_J)} & \Ob\left(\mathbb D(J)^{[1]}\right)}$$
for the maps induced at the level of objects by
$$\xymatrix@C=3.5pc{\mathbb D([0]\times J)\times^{d_2}_{\mathbb D([1]\times J)}\mathbb D([2]\times J) \ar[r]^-{pr_2} &\mathbb D([2]\times J) \ar@<+0.7ex>[r]^{\mathbb D(d^0\times J)} \ar@<-0.5ex>[r]_{\mathbb D(d^1\times J)}& \mathbb D([1]\times J). }$$

\item The underlying simplicial set $R\mathbb D$ is a quasi-category.
\end{enumerate}
\end{defn}

The conditions in the definition above are motivated by the following result, which is a source of numerous examples.

\begin{prop}
\label{representableisrepresentable}
For any quasi-category $X$, the prederivator $\rep{X}$ is quasi-representable.
\end{prop}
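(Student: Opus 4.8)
The plan is to fix a homotopically finite category $J$ and reduce all three conditions to statements about the single simplicial set $Y:=X^{NJ}$. Since $X$ is a quasi-category and the Joyal model structure is cartesian closed, $Y$ is again a quasi-category. Moreover, since the nerve preserves products and $\sset$ is cartesian closed, for every $[n]$ there are natural isomorphisms
$$\rep{X}([n]\times J)=\ho(X^{N([n]\times J)})\cong\ho(X^{\Delta[n]\times NJ})\cong\ho((X^{NJ})^{\Delta[n]})=\ho(Y^{\Delta[n]}),$$
so every category appearing in \cref{definitionrepresentable} is the homotopy category of a quasi-category $Y^{\Delta[n]}$. I would record at the outset that, by \cref{homotopycategory} applied to these quasi-categories, $\Ob(\ho(Y^{\Delta[n]}))=(Y^{\Delta[n]})_0\cong Y_n$.

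Condition (3) is then immediate: by \cref{rightinverse} we have $R\rep{X}\cong X$, which is a quasi-category by hypothesis. For condition (1), I would identify both the source and the target of the displayed map with $\Hom_{\sset}(NJ,X)$. For the source, \cref{yonedalemma} gives $\Ob(\rep{X}(J))\cong\Hom_{\pder^{st}}(D_J,\rep{X})$, while the opening paragraph identifies $\Ob(\rep{X}(J))=\Ob(\ho(Y))\cong Y_0=(X^{NJ})_0\cong\Hom_{\sset}(NJ,X)$; for the target, \cref{rightinverse} gives $\Hom_{\sset}(NJ,R\rep{X})\cong\Hom_{\sset}(NJ,X)$. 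It then remains to check that the counit-induced map respects these identifications. Writing $\epsilon_{D_J}\colon LNJ\to D_J$ for the counit and using the triangle identity $R\epsilon_{D_J}\circ\eta_{NJ}=\id_{NJ}$ (with $RD_J\cong NJ$ from \cref{rightinverse3,mainadjunction}), a short computation shows that the displayed map sends $\psi\colon D_J\to\rep{X}$, viewed via \cref{yonedalemma}, to $R\psi\colon NJ\to X$; unwinding the Yoneda correspondence identifies this with the canonical bijection above, which is therefore a bijection.

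For condition (2), I would observe that, under the identification $\rep{X}([n]\times J)\cong\ho(Y^{\Delta[n]})$, the displayed fork is precisely the coequalizer presentation of $\Mor(\ho(Y))$ from \cref{homotopycategory}. Concretely, passing to objects (and using that strict pullbacks in $\cat$ are computed objectwise, as are limits in $\pder^{st}$ by \cref{limitsinpder}), the fiber product becomes $Y_0\times^{s_0}_{Y_1,d_2}Y_2$, the maps induced by $\rep{X}(d^0\times J)$ and $\rep{X}(d^1\times J)$ become the faces $d_0,d_1\colon Y_2\to Y_1$, and the map $\Ob(dia^{[1]}_J)$ becomes the quotient $Y_1\to\Mor(\ho(Y))$ sending a $1$-simplex to its homotopy class. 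The last identification is the defining behaviour of the underlying diagram functor (see \cite{groth}). With these identifications in hand, the required coequalizer is exactly the one exhibited in \cref{homotopycategory} for the quasi-category $Y$.

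The main obstacle is not any single deep step but rather the bookkeeping needed to verify that the abstractly defined comparison maps coincide with the concrete ones: namely, that the counit-induced map of condition (1) is the canonical cartesian-closure bijection, and that $\Ob(dia^{[1]}_J)$ is literally the homotopy-class quotient. Both are naturality and triangle-identity checks; once they are carried out, all three conditions follow formally from \cref{rightinverse,homotopycategory} applied to the quasi-categories $Y$ and $Y^{\Delta[n]}$.
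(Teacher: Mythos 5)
Your proposal is correct and follows essentially the same route as the paper: both use $R\rep{X}\cong X$ (\cref{rightinverse}) for conditions (1) and (3), and both reduce condition (2) to the coequalizer description of $\Mor(\ho(-))$ from \cref{homotopycategory} applied to the quasi-category $X^{NJ}$, the paper packaging this reduction via \cref{Drespectscotensors} where you identify the terms with $Y=X^{NJ}$ directly. Your added verification that the counit-induced map in condition (1) agrees with the canonical bijection is a worthwhile check that the paper's proof leaves implicit.
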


\begin{proof}
For Condition (1) of \cref{definitionrepresentable}, we note that for any $J\in\catfin$ \cref{rightinverse} yields a natural bijection
$$
\Hom_{\sset}(NJ,R\rep{X})\cong\Hom_{\sset}(NJ,X)\cong\Ob(\rep{X}(J)),
$$
as desired.

In order to show that Condition (2) holds for $\rep{X}$ for any simplicial set $X$, we first prove it for $J=[0]$.
In this case the diagram that we ought to show is a coequalizer is
$$\xymatrix@C=1.4pc{\Ob\left(\rep{X}([0])\times^{d_2}_{\rep{X}([1])}\rep{X}([2])\right)\ar@<+0.7ex>[r] \ar@<-0.5ex>[r] & \Ob(\rep{X}([1])) \ar[d]_{\Ob(dia_{[0]}^{[1]})} \\
& \Ob\left(\rep{X}([0])^{[1]}\right).}$$
By the definition of $\rep{X}$, this diagram can be expressed as
$$X_0\times^{d_2}_{X_1}X_2\rightrightarrows X_1\to\Mor(\ho(X)).$$
which is a coequalizer diagram by the description of the homotopy category $\ho(X)$ given in \cref{homotopycategory}.

For the general case, the diagram that we need to show is a coequalizer is
$$\xymatrix@C=1.4pc{\Ob\left(\rep{X}([0]\times J)\times^{d_2}_{\rep{X}([1]\times J)}\rep{X}([2]\times J)\right)\ar@<+0.7ex>[r] \ar@<-0.5ex>[r] & \Ob(\rep{X}([1]\times J))\ar[d]_{\Ob(dia_J^{[1]})} \\ & \Ob\left(\rep{X}(J)^{[1]}\right).}$$
By \cref{Drespectscotensors}, this diagram can be rewritten as
$$\xymatrix@C=1.4pc{\Ob\left(\rep{X^{NJ}}([0])\times^{d_2}_{\rep{X^{NJ}}([1])}\rep{X^{NJ}}([2])\right) \ar@<+0.7ex>[r] \ar@<-0.5ex>[r] & \Ob(\rep{X^{NJ}}([1])) \ar[d]_{\Ob(dia_{[0]}^{[1]})} \\ & \Ob\left(\rep{X^J}([0])^{[1]}\right),}$$
and this was already observed to be a coequalizer because the prederivator $\rep{X^{NJ}}$ satisfies condition (2) for $J=[0]$.

Finally, Condition (3) is a consequence of \cref{rightinverse}, which asserts that $R\rep{X}\cong X$.
\end{proof}

While Condition (3) in the definition above is self-explanatory, we  elaborate on the meaning of conditions (1) and (2).

\begin{rmk}
Given that for every prederivator $\mathbb D$ one finds the identification 
$$\Ob\left(\mathbb D(J)^{[1]}\right)=\Mor(\mathbb D(J)),$$
Condition (2) essentially describes how the value of a quasi-representable derivator $\mathbb D$ on morphisms is completely determined by the $\set$-valued functor $\Ob\circ\mathbb D$.
\end{rmk}

Condition (1), however, seems less transparent. We now explain how it can be interpreted as requiring compatibility of $\mathbb D$ with a certain class of colimits at the level of objects.

Recall that the nerve $N\colon\cat\to\sset$ does not respect colimits in general. Therefore, when taking colimits of (nerves of) categories, we need to be careful. We focus on  diagrams of the following form, for which the issue does not exist.

\begin{defn}
Let $\{[n_i]\}_{i}\colon I\to\Delta\subset\cat$ be a diagram. We say that \emph{the colimit of the diagram $\{[n_i]\}_{i}$ is created in $\sset$} if the colimit in $\sset$ of the diagram $\{\Delta[n_i]\}_{i}\colon I\to\sset$, obtained by postcomposing with the Yoneda embedding, is the nerve of some category $J\in\catfin$, i.e, if there is an isomorphism of simplicial sets
$$\colim^{\sset}_{i\in I}\Delta[n_i]\cong NJ.$$
In this case, by applying $\ho\colon\sset\to\cat$, we in particular get an isomomorphism of categories.
$$\colim^{\cat}_{i\in I}[n_i]\cong J,$$
which justifies the terminology.
\end{defn}

The following remark implies on the one hand that any category $J$ is a colimit of a diagram whose colimit is created in $\sset$, and on the other hand that any quasi-representable derivator is determined on objects by its value on all $[n]$'s.

\begin{rmk}\label{remarkcolimits}
For every category $J$, the nerve $NJ$ is a presheaf and can therefore canonically be written as a colimit of representables $\Delta[n_i]$'s,
$$NJ\cong\colim^{\sset}_{\Delta[n_i]\to NJ}\Delta[n_i],$$
indexed over the diagram $\Delta\downarrow X\to\Delta$ (see e.g. \cite[\textsection3.1]{hovey}). This presentation is natural in $J$.
By definition, the colimit of this diagram is created in $\sset$, and we obtain the isomorphism of categories
$$J\cong\colim^{\cat}_{[n_i]\to J}[n_i].$$
This essentially describes the fact that the category $J$ can be built by taking a copy of $[0]$ for any object of $J$, a copy of $[1]$ for any morphism of $J$, a copy of $[2]$ for any commutative triangle in $J$, a copy of $[3]$ for any triple of composable arrows in $J$, and so on.
\end{rmk}

\begin{prop}
A prederivator $\mathbb D\colon\catfin^{\op}\to\cat$ satisfies Condition (1) of \cref{definitionrepresentable} if and only if it satisfies the following condition.
\begin{enumerate}
\item[(1')] For any diagram of categories $\{[n_i]\}_{i}$ with colimit created in $\sset$, the canonical map
$$\mathbb D(\colim^{\cat} [n_i]))\to\lim{}^{\cat}\mathbb D([n_i])$$
 induces bijections at the level of objects
 $$\Ob(\mathbb D(\colim^{\cat} [n_i])))\cong\Ob(\lim{}^{\cat}\mathbb D([n_i]))\cong \lim{}^{\set}\Ob(\mathbb D([n_i])).$$
\end{enumerate}
\end{prop}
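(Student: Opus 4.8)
The plan is to show that Condition~(1) and Condition~(1') both assert, for every $J\in\catfin$, the bijectivity of a single canonical comparison map
\[
\theta_J\colon\Ob(\mathbb D(J))\longrightarrow\Hom_{\sset}(NJ,R\mathbb D),
\]
which sends an object $x\in\Ob(\mathbb D(J))$ to the simplicial map whose value on a simplex $\sigma\colon[m]\to J$ of $NJ$ is $\mathbb D(\sigma)(x)\in\Ob(\mathbb D([m]))=(R\mathbb D)_m$; contravariant functoriality of $\mathbb D$ makes this assignment respect the simplicial structure maps, so $\theta_J$ is well defined. Concretely, I would first reconcile the two targets, then reconcile the two maps with $\theta_J$, and finally match the two ranges of quantification using \cref{remarkcolimits}.

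First I would treat the target. Given a diagram $\{[n_i]\}_i$ whose colimit is created in $\sset$, so that $\colim^{\sset}_i\Delta[n_i]\cong NJ$ for some $J\in\catfin$, \cref{mainadjunction} identifies $\Ob(\mathbb D([n_i]))=(R\mathbb D)_{n_i}=\Hom_{\sset}(\Delta[n_i],R\mathbb D)$, so that sending colimits to limits gives
\[
\lim{}^{\set}_i\Ob(\mathbb D([n_i]))\cong\Hom_{\sset}\big(\colim{}^{\sset}_i\Delta[n_i],R\mathbb D\big)\cong\Hom_{\sset}(NJ,R\mathbb D).
\]
In particular the target depends only on $J$ and not on the chosen diagram. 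Since $\Ob\colon\cat\to\set$ is a right adjoint (to the discrete-category functor) it preserves limits, so the second isomorphism in Condition~(1') is automatic, and the only content of Condition~(1') is the bijectivity of the cone map $\Ob(\mathbb D(\colim^{\cat}[n_i]))\to\lim^{\set}_i\Ob(\mathbb D([n_i]))$. A direct check shows that, under the identification above and the isomorphism $\colim^{\cat}[n_i]\cong J$, this cone map is exactly $\theta_J$.

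The heart of the argument, and the step I expect to be the main obstacle, is identifying $\theta_J$ with the map $\epsilon_{NJ}^*$ of Condition~(1). Using the colimit description $LNJ\cong\colim_{\Delta[n_i]\to NJ}D_{[n_i]}$ from \cref{mainadjunction}, the enriched Yoneda isomorphism $\Hom_{\pder^{st}}(D_{[n]},\mathbb D)\cong\Ob(\mathbb D([n]))$ of \cref{yonedalemma}, and the adjunction bijection $\Hom_{\pder^{st}}(LNJ,\mathbb D)\cong\Hom_{\sset}(NJ,R\mathbb D)$, one reduces to understanding the comparison map $\epsilon_{NJ}\colon LNJ\to D_J$ on each colimit leg $D_{[n_i]}\hookrightarrow LNJ$. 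A diagram chase with the triangle identities, together with \cref{rightinverse} ($R\rep{NJ}\cong NJ$), \cref{DversusHo} ($D_J\cong\rep{NJ}$) and \cref{rightinverse3} ($RD_J\cong NJ$), shows that $\epsilon_{NJ}$ corresponds to $\id_{NJ}$ under the adjunction, whence its restriction to the leg indexed by a functor $[n_i]\to J$ equals the map $D_{[n_i]}\to D_J$ that $D$ assigns to it. Precomposing with these legs and invoking the naturality of the Yoneda isomorphism then yields $\epsilon_{NJ}^*=\theta_J$. Coherently juggling these several isomorphisms is the delicate point.

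Finally I would match quantifiers. By \cref{remarkcolimits} every $J\in\catfin$ is the colimit of its canonical diagram indexed by $\Delta\downarrow NJ$, whose colimit is created in $\sset$; conversely any diagram as in Condition~(1') has colimit some $J\in\catfin$. Combined with the fact, established in the second step, that the target and the map of Condition~(1') depend only on $J$ and equal $\Hom_{\sset}(NJ,R\mathbb D)$ and $\theta_J=\epsilon_{NJ}^*$ respectively, both conditions reduce to the single assertion that $\theta_J$ is a bijection for every $J\in\catfin$. This proves that the two conditions are equivalent.
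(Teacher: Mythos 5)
Your proposal is correct and follows essentially the same route as the paper's proof: both match the two quantifications via \cref{remarkcolimits} and identify the targets through the chain $\Hom_{\sset}(\colim^{\sset}\Delta[n_i],R\mathbb D)\cong\lim{}^{\set}\Hom_{\sset}(\Delta[n_i],R\mathbb D)\cong\lim{}^{\set}\Ob(\mathbb D([n_i]))$. The only difference is one of rigor: where the paper declares the translation of Condition (1) into a statement about colimit presentations to be ``immediate,'' you explicitly check that the two canonical maps agree by factoring both $\epsilon_{NJ}^*$ and the cone map through the comparison map $\theta_J$ --- a correctly executed elaboration of the same argument rather than a different one.
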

\begin{proof}
Because we can decompose any category $J$ as a colimit created in $\sset$ by \cref{remarkcolimits}, Condition (1) becomes immediately equivalent to the assertion that for any colimit in $\sset$ of the form $\colim \Delta[n_i] \cong NJ$, there is an isomorphism $$\Ob(\mathbb D(\colim^{\cat} [n_i])))\cong\Hom_{\sset}(\colim^{\sset}\Delta[n_i]),R\mathbb D).$$
Now observe that for any diagram $\{n_i\}_i$ whose colimit is created in $\sset$ there are natural isomorphisms
$$\begin{array}{rclr}
\Hom_{\sset}(\colim^{\sset}\Delta[n_i]),R\mathbb D)&\cong&\lim^{\set}\Hom_{\sset}([n_i],R\mathbb D)\\
&\cong&\lim^{\set} (R\mathbb D)_n\\
&\cong&\lim^{\set}\Ob(\mathbb D( [n_i])),&
\end{array}$$
and so if the isomorphism $$\Ob(\mathbb D(\colim^{\cat} [n_i])))\cong\Hom_{\sset}(\colim^{\sset}\Delta[n_i]),R\mathbb D)$$ of Condition (1) holds, then so must the isomorphism $$\Ob(\mathbb D(\colim^{\cat} [n_i])))\cong\lim{}^{\set}\Ob(\mathbb D( [n_i]))$$
of Condition (1'), and vice versa.
\end{proof}

\begin{ex}
For any $C\in\cat$, the representable prederivator $D_C$ (as described in \cref{yonedaembedding}) is quasi-representable.
\end{ex}

\begin{ex}
The following prederivators fail to be quasi-representable.
\begin{enumerate}
\item If $X$ is a non empty quasi-category, the prederivator $\rep{X}\amalg D_{[0]}$ fails to satisfy condition (1) of \cref{definitionrepresentable}. To see this, we observe for instance that this prederivator does not send coproducts to products, even at the level of objects:
$$\begin{array}{rcl}
\Ob((\rep{X}\amalg D_{[0]})([0]\amalg[0]))&=&\Ob((\ho(X^{[0]\amalg[0]})\amalg[0])\\
&\cong&\Ob((\ho(X\times X))\amalg[0])\\
&\cong&\Ob((\ho(X)\times\ho(X))\amalg[0])\\
&\not\cong&\Ob((\ho(X)\amalg [0])\times (\ho(X)\amalg[0]))\\
&=&\Ob((\rep{X}\amalg D_{[0]})([0]))\times\Ob((\rep{X}\amalg D_{[0]})([0])).
\end{array}$$

\item If $K$ is a non discrete category, the functor constant at $K$ fails to satisfy condition (2) of \cref{definitionrepresentable}. To see this, we observe that the diagram
$$\Ob(K)\rightrightarrows \Ob(K)\stackrel{\id}{\longrightarrow}\Mor(K),$$
where the parallel arrows are both identities on $\Ob(K)$,
is not a coequalizer.
\item If $Y$ is simplicial set that is not a quasi-category, the functor $\rep{Y}$ fails to satisfy condition (3) of \cref{definitionrepresentable}, given that
$$R\rep{Y}\cong Y\not\in\qcat$$
as follows from \cref{rightinverse}.
\end{enumerate}
\end{ex}

The terminology ``quasi-representable'' is justified by the following.

\begin{thm}
\label{characterizationrepresentable}
A prederivator $\mathbb D$ is quasi-representable if and only if it lies in the image of $\mathbb Ho\colon\qcat\to\pder^{st}$, i.e., if it is of the form
$$\mathbb D\cong \rep{X}$$
for a quasi-category $X$.
\end{thm}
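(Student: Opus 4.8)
The forward implication is already established in \cref{representableisrepresentable}, so the plan is to prove the converse: if $\mathbb{D}$ is quasi-representable, then $\mathbb{D}\cong\rep{X}$ for some quasi-category $X$. The only reasonable candidate for $X$ is the underlying simplicial set $X:=R\mathbb{D}$, which is a quasi-category precisely by Condition (3) of \cref{definitionrepresentable}; so I would fix this choice and aim to produce a natural isomorphism $\mathbb{D}\cong\rep{R\mathbb{D}}$. Note that \cref{rightinverse} already gives $R\rep{X}\cong X$, so at the level of underlying simplicial sets there is nothing to check; the content is in reconstructing each category $\mathbb{D}(J)$ from this data.

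First I would record the object-level and simplicial-level comparison. For every $K\in\catfin$, Condition (1) provides a bijection $\Ob(\mathbb{D}(K))\cong\Hom_{\sset}(NK,R\mathbb{D})$ natural in $K$. Applying this to $K=[n]\times J$ (which lies in $\catfin$) and using that the nerve preserves products, $N([n]\times J)\cong\Delta[n]\times NJ$, yields bijections $\Ob(\mathbb{D}([n]\times J))\cong(X^{NJ})_n$ natural in both $[n]$ and $J$; that is, an isomorphism of simplicial sets
$$P_J:=\Ob\bigl(\mathbb{D}([\bullet]\times J)\bigr)\cong X^{NJ}.$$
On the other hand, the underlying diagram functors $dia_J^{[\bullet]}$ are simplicial and $2$-natural in $J$, so on objects they assemble into a map of simplicial sets $\delta_J\colon P_J\to N\mathbb{D}(J)$, where $(N\mathbb{D}(J))_n=\Ob(\mathbb{D}(J)^{[n]})$. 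The same construction applied to $\rep{X}$ gives an isomorphism $\Ob(\rep{X}([\bullet]\times J))\cong X^{NJ}$ together with a canonical comparison map $X^{NJ}\to N\ho(X^{NJ})=N\rep{X}(J)$.

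It then remains to identify the two targets $N\mathbb{D}(J)$ and $N\ho(X^{NJ})$ under these maps out of $X^{NJ}$. In degree $0$ both maps are the bijection from Condition (1). In degree $1$, Condition (2) asserts that $\delta_J$ presents $\Mor(\mathbb{D}(J))=(N\mathbb{D}(J))_1$ as the coequalizer of $(X^{NJ})_0\times^{d_2}_{(X^{NJ})_1}(X^{NJ})_2\rightrightarrows(X^{NJ})_1$, which by \cref{homotopycategory} is exactly the coequalizer computing $\Mor(\ho(X^{NJ}))$; hence the two degree-$1$ quotients agree. Since the nerve of a category is $2$-coskeletal, the only remaining content is to check that composition is matched, and this is the step I expect to be the main obstacle: one must show that the composite of two morphisms computed in $\mathbb{D}(J)$ agrees with the one computed in $\ho(X^{NJ})$. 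Here Condition (3) is essential. Because $X^{NJ}$ is a quasi-category, any composable pair of lifted $1$-simplices spans an inner horn $\Lambda^2_1$ which can be filled to a $2$-simplex $w\in(X^{NJ})_2\cong\Ob(\mathbb{D}([2]\times J))$; simpliciality of $dia_J^{[\bullet]}$ along the coface $d^1\colon[1]\to[2]$ then identifies the composite in $\mathbb{D}(J)$ with $\delta_J(d_1 w)$, which represents the composite in $\ho(X^{NJ})$. This matches composition, so the $2$-truncated nerves agree and $\mathbb{D}(J)\cong\ho(X^{NJ})=\rep{X}(J)$. All identifications used are natural and $2$-natural in $J$, coming from Condition (1) and the $2$-naturality of the underlying diagram functors, so they assemble into an isomorphism of prederivators $\mathbb{D}\cong\rep{X}$, completing the proof.
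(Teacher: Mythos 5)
Your proposal is correct and takes essentially the same route as the paper: the paper also sets $X:=R\mathbb D$ (a quasi-category by Condition (3) of \cref{definitionrepresentable}) and proves $\mathbb D\cong\rep{R\mathbb D}$ (its \cref{inverseforrepresentable}) by matching objects via Condition (1) and morphisms via the coequalizer of Condition (2), with that comparison packaged as the lemma that $R$ reflects isomorphisms between quasi-representable prederivators (\cref{testonfirstrow}). The only real difference is that the paper dismisses compatibility with identities, sources/targets, and composition as ``a straightforward check,'' whereas your inner-horn-filling argument in $X^{NJ}$ spells out exactly why that check goes through.
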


The proof makes use of a the following result, which shows how the underlying simplicial set of a quasi-representable prederivator uniquely determines the prederivator.
\begin{lem}
\label{testonfirstrow}
The functor $R$ reflects isomorphisms between quasi-repre\-sentable prederivators, i.e.,
given two quasi-representable prederivators $\mathbb D$ and $\mathbb E$,
$$\text{if }R\mathbb D\cong R\mathbb E\text{ then }\mathbb D\cong\mathbb E.$$
\end{lem}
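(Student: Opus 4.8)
The plan is to build the isomorphism $\mathbb D\cong\mathbb E$ objectwise out of a chosen isomorphism $\phi\colon R\mathbb D\xrightarrow{\cong}R\mathbb E$ of underlying simplicial sets, reconstructing each category $\mathbb D(J)$ from $R\mathbb D$ by means of the three quasi-representability conditions and transporting it across $\phi$. First I would define the comparison on objects: for each $J\in\catfin$, Condition (1) of \cref{definitionrepresentable} applied to $\mathbb D$ and to $\mathbb E$ gives bijections $\Ob(\mathbb D(J))\cong\Hom_{\sset}(NJ,R\mathbb D)$ and $\Ob(\mathbb E(J))\cong\Hom_{\sset}(NJ,R\mathbb E)$ natural in $J$, and composing with postcomposition by $\phi$ produces a bijection $\Phi_J^{\mathrm{ob}}\colon\Ob(\mathbb D(J))\cong\Ob(\mathbb E(J))$, again natural in $J$. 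Applying this at the objects $[n]\times J\in\catfin$ and using $N([n]\times J)\cong\Delta[n]\times NJ$ identifies the simplicial set $n\mapsto\Ob(\mathbb D([n]\times J))$ with the internal hom $(R\mathbb D)^{NJ}$, and likewise for $\mathbb E$; since $R\mathbb D$ is a quasi-category by Condition (3), so is $(R\mathbb D)^{NJ}$.

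Next I would define the comparison on morphisms. Condition (2) presents $\Mor(\mathbb D(J))$ and $\Mor(\mathbb E(J))$ as coequalizers of diagrams assembled entirely out of the object-sets $\Ob(\mathbb D([n]\times J))$, respectively $\Ob(\mathbb E([n]\times J))$, for $n=0,1,2$, together with the maps induced by the cosimplicial structure maps $d^0,d^1,d^2,s^0$. Because the object-bijections $\Phi^{\mathrm{ob}}_{[n]\times J}$ are natural in the $\catfin$-variable, they commute with all of these structure maps and with the formation of the fibre product $\times^{d_2}$, so they intertwine the two coequalizer diagrams and descend to a bijection $\Phi_J^{\mathrm{mor}}\colon\Mor(\mathbb D(J))\cong\Mor(\mathbb E(J))$, with inverse obtained from $\phi^{-1}$. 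The source, target and identity maps of $\mathbb D(J)$ and $\mathbb E(J)$ are induced by $d^1\times J$, $d^0\times J$ and $s^0\times J$, so $\Phi^{\mathrm{ob}}_J$ and $\Phi^{\mathrm{mor}}_J$ are automatically compatible with them.

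The crux, and the step I expect to be the main obstacle, is verifying that $\Phi_J:=(\Phi^{\mathrm{ob}}_J,\Phi^{\mathrm{mor}}_J)$ preserves composition, since Conditions (1)--(2) determine only objects and morphisms and say nothing directly about composites. Here I would exploit Condition (3): given composable morphisms with representatives $f,g\in\Ob(\mathbb D([1]\times J))\cong((R\mathbb D)^{NJ})_1$, the fact that the target of $[f]$ and the source of $[g]$ agree is an honest equality of vertices in $(R\mathbb D)^{NJ}$ (Condition (1) involves no quotient), so $(f,g)$ assembles into an inner horn $\Lambda^2_1\to(R\mathbb D)^{NJ}$, which admits a filler $\sigma\in\Ob(\mathbb D([2]\times J))$ because $(R\mathbb D)^{NJ}$ is a quasi-category. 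The naturality squares relating the underlying-diagram functors $dia^{[2]}_J$ and $dia^{[1]}_J$ show that $dia^{[2]}_J(\sigma)$ is a functor $[2]\to\mathbb D(J)$ whose three edges are the classes of $f$, $g$ and $d_1\sigma$, whence the composite equals $[d_1\sigma]$. Transporting the same filler by $\phi$ computes the composite in $\mathbb E(J)$, and since $\phi$ commutes with the face maps this yields $\Phi^{\mathrm{mor}}_J([g]\circ[f])=\Phi^{\mathrm{mor}}_J([g])\circ\Phi^{\mathrm{mor}}_J([f])$.

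Finally I would record that each $\Phi_J$ is then a functor which is bijective on objects and morphisms, hence an isomorphism of categories, and that the family $\{\Phi_J\}_{J}$ is natural in $J$ with respect to both the $1$-cells and the $2$-cells of $\catfin$, because every ingredient — Condition (1), the coequalizer presentation of Condition (2), and postcomposition by $\phi$ — is natural in the $\catfin$-variable; this last point is a routine but bookkeeping-heavy diagram chase. Assembling these observations produces the desired isomorphism of prederivators $\Phi\colon\mathbb D\cong\mathbb E$.
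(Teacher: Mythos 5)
Your proposal is correct and follows essentially the same route as the paper's proof: Condition (1) of \cref{definitionrepresentable} gives natural bijections on objects, Condition (2) presents the morphism sets as coequalizers built out of object data so that the object bijections descend to morphism bijections, and these assemble into isomorphisms of categories natural in $J$. The only difference is that the paper compresses the compatibility with identities, source/target maps and composition into ``a straightforward check,'' whereas you spell out the composition step explicitly via inner-horn filling in the quasi-category $(R\mathbb D)^{NJ}$ using Condition (3) together with the underlying-diagram functors --- a correct elaboration of exactly the detail the paper omits.
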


\begin{proof}
As a consequence of Condition (1) of \cref{definitionrepresentable} we obtain that $\mathbb E$ and $\mathbb D$ agree at the level of objects, namely, for any $J\in\catfin$ there are natural bijections
$$\begin{array}{rcl}
\Ob(\mathbb D(J))&\cong&\Hom_{\sset}(NJ,R\mathbb D)\\
&\cong&\Hom_{\sset}(NJ,R\mathbb E)\\
&\cong&\Ob(\mathbb E(J)).
\end{array}$$

As a consequence of Condition (2) of \cref{definitionrepresentable}, we see that $\mathbb E$ and $\mathbb D$ agree at the level of morphisms, namely, for any $J\in\catfin$ there are natural bijections
\[
\begin{tikzcd}
\Mor(\mathbb D(J))\arrow[d, "\cong"]\\
\coeq\left(\Ob(\mathbb D([0]\times J)\times^{d_2}_{\Ob(\mathbb D([1]\times J))}\Ob(\mathbb D([2]\times J)))\rightrightarrows\Ob(\mathbb D([1]\times J))\right)\arrow[u]\arrow[d, "\cong"]\\
\coeq\left(\Ob(\mathbb E([0]\times J)\times^{d_2}_{\Ob(\mathbb E([1]\times J))}\Ob(\mathbb E([2]\times J)))\rightrightarrows\Ob(\mathbb E([1]\times J))\right)\arrow[u]\arrow[d, "\cong"]\\
\Mor(\mathbb E(J)).\arrow[u]
\end{tikzcd}
\]

Finally, a straightforward check shows that the natural bijections above are compatible with identities, source and target maps, and compositions, so that for any $J\in\catfin$ we get a natural isomorphism of categories
$$\mathbb D(J)\cong\mathbb E(J),$$
and therefore an isomorphism of prederivators
$\mathbb D\cong\mathbb E$, as desired.
\end{proof}

\begin{rmk} Notice that $R$ does not reflect isomorphisms between non quasi-representable prederivators. Consider for instance the map
$$L(\Delta[1]\times\Delta[1])\to L(\Delta[1])\times L(\Delta[1])$$
from \cref{Ldoesnotrespectproducts}. It was proven not to be an isomorphism of prederivators, but it is sent by $R$ to an isomorphism of simplicial sets as a consequence of \cref{rightinverse2} and of the fact that $R$ commutes with products.
\end{rmk}

We now finish the proof of \cref{characterizationrepresentable} by showing that any quasi-representable prederivator $\mathbb D$ is in the essential image of $\mathbb Ho\colon\qcat\to\pder^{st}$.

\begin{prop}
\label{inverseforrepresentable}
If a prederivator $\mathbb D$ is quasi-representable, then there is an isomorphism of prederivators
$$\mathbb D\cong\rep{R\mathbb D}.$$
\end{prop}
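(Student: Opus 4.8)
The plan is to assemble the three preceding results into a short argument. The crucial first move is to observe that, since $\mathbb D$ is quasi-representable, Condition (3) of \cref{definitionrepresentable} guarantees that its underlying simplicial set $X := R\mathbb D$ is a quasi-category. This is what unlocks everything: it allows me to form the homotopy prederivator $\rep{R\mathbb D}$ and then invoke \cref{representableisrepresentable} to conclude that $\rep{R\mathbb D}$ is itself quasi-representable.

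At this point I have two quasi-representable prederivators in hand, namely $\mathbb D$ and $\rep{R\mathbb D}$, and by \cref{testonfirstrow} the functor $R$ reflects isomorphisms between quasi-representable prederivators. So it suffices to produce an isomorphism of simplicial sets $R\mathbb D\cong R\rep{R\mathbb D}$. But \cref{rightinverse}, applied to the simplicial set $X=R\mathbb D$, gives exactly $R\rep{R\mathbb D}\cong R\mathbb D$. Feeding this isomorphism into \cref{testonfirstrow} then yields the desired isomorphism of prederivators $\mathbb D\cong\rep{R\mathbb D}$.

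I would expect no genuine obstacle in this argument: the substantive content has been front-loaded into the earlier lemmas, most notably into \cref{testonfirstrow}, which does the real work of reconstructing a quasi-representable prederivator from its underlying simplicial set using Conditions (1) and (2). The only point requiring care is verifying that the hypotheses of \cref{testonfirstrow} are met, i.e.\ that \emph{both} prederivators entering the lemma are quasi-representable. For $\mathbb D$ this is the standing assumption, while for $\rep{R\mathbb D}$ it is precisely the combination of Condition (3) (to know $R\mathbb D$ is a quasi-category) with \cref{representableisrepresentable}. This is the one place where Condition (3) is indispensable, so I would flag it explicitly in the write-up.
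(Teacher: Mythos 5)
Your proposal is correct and follows essentially the same route as the paper's own proof: Condition (3) gives that $R\mathbb D$ is a quasi-category, \cref{rightinverse} gives the isomorphism $R\rep{R\mathbb D}\cong R\mathbb D$, and \cref{testonfirstrow} lifts it to an isomorphism of prederivators. Your explicit verification (via \cref{representableisrepresentable}) that $\rep{R\mathbb D}$ is quasi-representable, so that the hypotheses of \cref{testonfirstrow} apply to both prederivators, is a point the paper leaves implicit, but it is the same argument.
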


\begin{proof}
By the definition of a quasi-representable prederivator, $R\mathbb D$ is a quasi-category.
By \cref{rightinverse}, we have an isomorphism of simplicial sets
$$R\rep{R\mathbb D}\cong R\mathbb D,$$
and by \cref{testonfirstrow} we get an isomorphism of prederivators
$$\rep{R\mathbb D}\cong \mathbb D,$$
as desired.
\end{proof}

\section{The model category of prederivators}

In this section, we put a model structure on $\pder^{st}$ by transferring the Joyal model structure using the functor $R\colon\pder^{st}\to\sset$, and we prove that the induced Quillen pair is in fact a Quillen equivalence.

\begin{thm}
\label{transferredmodelstructure}
The category $\pder^{st}$ admits the transferred model structure using the  functor $R\colon\pder^{st}\to\sset$, where by definition fibrations and weak equivalences are created by $R$. Furthermore, with respect to this model category structure, the adjunction $(L,R)$ is a Quillen pair.
\end{thm}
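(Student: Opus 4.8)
The plan is to realize the desired model structure as a right transfer of the Joyal model structure along the adjunction $(L,R)$ of \cref{mainadjunction}, by invoking the standard transfer (Kan recognition) theorem for cofibrantly generated model categories (see e.g.~\cite{RiehlCHT}). Writing $I$ for a set of generating cofibrations and $J$ for a set of generating acyclic cofibrations of the Joyal model structure on $\sset$, the theorem will produce a cofibrantly generated model structure on $\pder^{st}$ with generating cofibrations $LI$, generating acyclic cofibrations $LJ$, and with fibrations and weak equivalences created by $R$, provided three hypotheses hold: that $\pder^{st}$ is bicomplete, that $LI$ and $LJ$ permit the small object argument, and that $R$ carries every relative $LJ$-cell complex to a weak equivalence in the Joyal model structure. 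Bicompleteness is exactly \cref{limitsinpder}, so it remains to verify the last two conditions.

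For the small object argument, I would check that the (co)domains of $LI$ and $LJ$ are small in $\pder^{st}$, which follows formally from the adjunction. Indeed, for any simplicial set $A$ and any suitably long transfinite sequence $Z_\bullet$ in $\pder^{st}$, the chain of natural isomorphisms
\[
\Hom_{\pder^{st}}(LA,\textstyle\colim_i Z_i)\cong\Hom_{\sset}(A,\colim_i RZ_i)\cong\colim_i\Hom_{\sset}(A,RZ_i)\cong\colim_i\Hom_{\pder^{st}}(LA,Z_i)
\]
shows that $\Hom_{\pder^{st}}(LA,-)$ preserves the colimit; here the first isomorphism uses the adjunction together with the fact that $R$ preserves colimits (\cref{adjointsofR}), and the second uses that $A$ is small in the locally presentable category $\sset$. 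Hence every $LA$ is small, so $LI$ and $LJ$ permit the small object argument.

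The heart of the argument is the acyclicity condition, which in transfer proofs typically requires a separate input such as a path-object argument; here it instead follows directly from the two special features of $R$, namely that it preserves colimits (\cref{adjointsofR}) and that the unit $\eta$ gives a natural isomorphism $RL\cong\id$ (\cref{rightinverse2}). Given a one-step attachment, that is, a pushout in $\pder^{st}$ of a map $Lj$ with $j\in J$ along an arbitrary map $LA\to\mathbb D$ producing $\mathbb D\to\mathbb E$, applying the colimit-preserving functor $R$ yields a pushout square in $\sset$ whose side parallel to $R(\mathbb D\to\mathbb E)$ is $RLj\cong j$, an acyclic cofibration of the Joyal model structure. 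Since acyclic cofibrations are stable under pushout, $R(\mathbb D\to\mathbb E)$ is again an acyclic cofibration. As $R$ also preserves the transfinite composites used to build cell complexes, it sends every relative $LJ$-cell complex to a transfinite composite of pushouts of maps in $J$, hence to an acyclic cofibration, and in particular to a weak equivalence. This verifies the acyclicity condition, and I expect this to be the only step requiring genuine thought, with the crucial observation being that $RL\cong\id$ collapses $LJ$-cells to honest $J$-cells.

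With all hypotheses in place, the transfer theorem produces the transferred model structure on $\pder^{st}$ with fibrations and weak equivalences created by $R$, as claimed. Finally, the adjunction $(L,R)$ is a Quillen pair essentially by construction: $L$ sends the generating cofibrations $I$ and generating acyclic cofibrations $J$ to $LI$ and $LJ$, which are precisely the generating cofibrations and generating acyclic cofibrations of the transferred structure, so $L$ preserves cofibrations and acyclic cofibrations and is thus left Quillen; equivalently, $R$ preserves fibrations and acyclic fibrations since these are created by it.
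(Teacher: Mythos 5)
Your proposal is correct and follows essentially the same route as the paper: both transfer the Joyal model structure along $(L,R)$ via Kan's theorem, verifying smallness from the fact that $R$ preserves (filtered) colimits and verifying acyclicity by using that $R$ preserves colimits together with the isomorphism $RL\cong\id$ to collapse $LJ$-cell complexes to $J$-cell complexes. Your write-up merely spells out in more detail the two steps that the paper's proof dispatches in one line each, and the Quillen-pair conclusion is, as you note, automatic from the transfer theorem.
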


For further reference, we record here the main properties for the Joyal model structure.

Denote by $\mathbb I$ the free living isomorphism category, i.e., the category containing two objects and two inverse isomorphisms between them. 

\begin{thm}[Joyal]
\label{Joyalmodelstructure}
There exists a cofibrantly generated model structure on the category $\sset$ in which
\begin{itemize}
\item the cofibration are precisely the monomorphisms;
\item the weak equivalences are precisely the categorical equivalences;
\item the fibrant objects are precisely the quasi-categories;
\item the fibrations between quasi-categories are precisely the maps between quasi-categories that have the right lifting properties with respect to the inner horn inclusions $\Lambda^k[n]\hookrightarrow\Delta[n]$ for $n>0$ and $0<k<n$ and with respect to either inclusion $\Delta[0]\to N\mathbb I$;
\item the acyclic fibrations are precisely the maps that have the right lifting properties with respect to the boundary inclusions $\partial\Delta[n]\hookrightarrow\Delta[n]$ for $n>0$;
\item the generating cofibrations are the boundary inclusions $\partial\Delta[n]\hookrightarrow\Delta[n]$ for $n>0$.
\end{itemize}
\end{thm}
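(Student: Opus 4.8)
The plan is to construct the model structure by specifying the weak equivalences intrinsically, invoking a recognition theorem for the existence and factorization axioms, and deferring the identification of the fibrations to the very end. I would begin by fixing the interval $J:=N\mathbb I$ and declaring a map $f\colon X\to Y$ between quasi-categories to be an \emph{equivalence} if it admits a homotopy inverse up to $J$-homotopy. The key preliminary input is the \emph{pushout-product lemma}: the pushout-product of a monomorphism with an inner horn inclusion is inner anodyne. Granting this, for every quasi-category $Z$ the internal hom $Z^{A}$ is again a quasi-category and $Z^{(-)}$ carries monomorphisms to inner fibrations, so I may define a general map $f\colon A\to B$ to be a \emph{categorical equivalence} precisely when $Z^{B}\to Z^{A}$ is an equivalence of quasi-categories for every quasi-category $Z$, and take this class as the weak equivalences $W$.

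Next I would verify the hypotheses of J.~Smith's recognition theorem for combinatorial model structures on the locally presentable category $\sset$, with generating cofibrations $I=\{\partial\Delta[n]\hookrightarrow\Delta[n]\}_{n>0}$. Three of the hypotheses are comparatively routine: the class $W$ has the two-out-of-three property and is closed under retracts directly from its definition, and it is an accessible, accessibly embedded subcategory of the arrow category of $\sset$, which follows from the accessibility of the internal hom functors together with a reduction of the defining condition to a set of test quasi-categories. The content-bearing hypothesis $\mathrm{inj}(I)\subseteq W$ amounts to showing that every trivial Kan fibration (a map with the right lifting property against all boundary inclusions) is a categorical equivalence, which I would prove by checking that it induces an equivalence on internal homs into any quasi-category. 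Since $I$ generates exactly the monomorphisms, this already pins down the cofibrations as the monomorphisms.

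The main obstacle is the remaining Smith hypothesis together with the explicit characterization of the fibrant objects and fibrations: I must show that the trivial cofibrations $W\cap\mathrm{cof}(I)$ are closed under pushout and transfinite composition, and identify this class as the saturation of the inner horn inclusions together with the endpoint inclusions $\Delta[0]\to N\mathbb I$. The heart of this is Joyal's lifting analysis. Using the pushout-product lemma one shows that a quasi-category $Z$ has the right lifting property against every trivial cofibration, so the fibrant objects are exactly the quasi-categories; the delicate point is handling the \emph{outer} horns, which is where the interval $N\mathbb I$ enters, via the lemma that a quasi-category admits fillers for $\Lambda^{0}[n]\hookrightarrow\Delta[n]$ whenever the initial edge is sent to an invertible $1$-morphism. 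Dually, a map between quasi-categories is a fibration if and only if it has the right lifting property against the inner horn inclusions and against either inclusion $\Delta[0]\to N\mathbb I$, which is precisely the stated characterization.

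Finally I would assemble the remaining clauses. Once the model structure exists with weak equivalences $W$ and cofibrations the monomorphisms, the acyclic fibrations are by definition the maps with the right lifting property against all cofibrations, hence against the generating set $I$, and these are exactly the trivial Kan fibrations detected by the boundary inclusions; the identifications of the fibrant objects and of the fibrations between fibrant objects then follow from the lifting analysis above by the standard retract argument. I expect the pushout-product lemma and the outer-horn filling result --- both genuinely combinatorial statements about simplicial sets --- to be the crux, and I would follow the treatments in \cite{Joyalnotes,htt,RVbook} for these inputs.
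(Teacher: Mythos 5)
First, a framing remark: the paper does not prove this statement at all --- it is recalled verbatim as Joyal's theorem, to be used as input for the transfer argument of \cref{transferredmodelstructure}. So your sketch is to be measured against the standard literature proofs (Joyal, Lurie, Cisinski), whose architecture you correctly reproduce in outline: weak equivalences defined by exponentiation into quasi-categories, Smith's recognition theorem on the locally presentable category $\sset$ with $I=\{\partial\Delta[n]\hookrightarrow\Delta[n]\}$, the pushout-product lemma for inner anodyne maps, and the special outer-horn lifting lemma as the combinatorial crux.

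There is, however, one genuine error in your plan. In your third paragraph you propose to ``identify [the class of trivial cofibrations] as the saturation of the inner horn inclusions together with the endpoint inclusions $\Delta[0]\to N\mathbb I$.'' This identification is not available. If it held, the set $\{\Lambda^k[n]\hookrightarrow\Delta[n]\}\cup\{\Delta[0]\to N\mathbb I\}$ would be an explicit set of generating acyclic cofibrations, whereas no explicit generating set is known --- the paper itself remarks, immediately after the statement, that ``there is no explicit description of the class of generating acyclic cofibrations.'' Equivalently, your claim asserts that every map with the right lifting property against inner horns and an endpoint inclusion is a Joyal fibration; this is only known for maps \emph{between quasi-categories}, which is exactly why the fourth bullet of the statement carries that restriction. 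Only one containment is true (and is all you need): each of those generators is a trivial cofibration, and the trivial cofibrations of any model structure are saturated, so the saturation sits inside $W\cap\mathrm{cof}(I)$; the reverse containment is at best open. The repair is standard: verify Smith's remaining hypothesis --- closure of $W\cap\mathrm{cof}(I)$ under pushout and transfinite composition --- directly from your exponential definition of $W$, using that $Z^{(-)}$ carries pushouts along monomorphisms to pullbacks along inner fibrations of quasi-categories together with gluing/cube lemmas for equivalences of quasi-categories (this is how Joyal's and Cisinski's treatments proceed), and then deduce the fibrancy of quasi-categories and the characterization of fibrations between them \emph{a posteriori} from the special-outer-horn lemma plus the usual factorization-and-retract argument, without ever naming the generating trivial cofibrations. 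Separately, your one-line accessibility claim for $W$ (``reduction \ldots{} to a set of test quasi-categories'') glosses a real quantification-over-a-proper-class issue; the published arguments handle it instead via an accessible fibrant-replacement functor, and you should too.
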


While this model structure is cofibrantly generated for formal reasons, there is no explicit description of the class of generating acyclic cofibrations.

We will apply the following classical transfer theorem (see e.g.~\cite[Theorem 11.3.2]{hirschhorn}).

\begin{thm}[Kan]
\label{theoremtransferredmodelstructure}
Let $\cM$ be a cofibrantly generated model category with a  set of generating cofibrations $I$ and a set of generating acyclic
cofibrations $J$, let $\cN$ be a complete and cocomplete category, and let
$$F\colon\cM\rightleftarrows\cN\colon G$$
be an adjunction.
Suppose that the following conditions hold.
\begin{enumerate}
\item The left adjoint $F$ preserves small objects; this is the case in particular when the right adjoint preserves filtered colimits.
\item The right adjoint $G$ takes maps that can be constructed
as a transfinite composition of pushouts of elements of $F(J)$ to weak equivalences.
\end{enumerate}
Then, there is a cofibrantly generated model category structure on $\cN$ in which
\begin{itemize}
\item the set $F(I)$ is
a set of generating cofibrations,
\item the set $F(J)$ is a set of generating acyclic cofibrations,
\item the weak equivalences are the maps that $G$ takes to weak equivalences in $\cM$ and
\item the fibrations are the maps that $G$ takes to fibrations in $\cM$.
\end{itemize}
Furthermore, with respect to this model category structure, the adjunction $(F, G)$ is a Quillen pair.
\end{thm}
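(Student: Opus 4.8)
The plan is to define the three distinguished classes of maps in $\cN$ by fiat and then verify Quillen's axioms, using the small object argument to produce the two factorizations and consuming hypothesis (2) at the single nontrivial point. Declare a map $f$ in $\cN$ to be a \emph{weak equivalence} (resp.\ \emph{fibration}) precisely when $Gf$ is a weak equivalence (resp.\ fibration) in $\cM$, and declare the \emph{cofibrations} to be the maps having the left lifting property against every map of $\cN$ that is simultaneously a fibration and a weak equivalence. Axiom MC1 (limits and colimits) is the assumed completeness and cocompleteness of $\cN$; MC2 (two-out-of-three for weak equivalences) is immediate because $G$ is a functor and weak equivalences in $\cM$ satisfy two-out-of-three; and MC3 (closure under retracts) holds because $G$ preserves retracts while the cofibrations, being defined by a lifting property, are automatically retract-closed.

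Next I would use the adjunction to convert the definitions of fibration and acyclic fibration into injectivity conditions. Since $\cM$ is cofibrantly generated, its fibrations are exactly the maps with the right lifting property against $J$, and its acyclic fibrations are exactly the maps with the right lifting property against $I$. By the adjunction $(F,G)$, a map $f$ of $\cN$ therefore satisfies that $Gf$ is a fibration if and only if $f$ has the right lifting property against $F(J)$, and that $Gf$ is an acyclic fibration if and only if $f$ has the right lifting property against $F(I)$. In particular the acyclic fibrations of $\cN$ are precisely the $F(I)$-injective maps, so the cofibrations of $\cN$ coincide with the $F(I)$-cofibrations; this already identifies $F(I)$ as a generating set of cofibrations.

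I would then produce the two factorizations via the small object argument. The domains of $I$ and $J$ are small in $\cM$, so by condition (1) the domains of $F(I)$ and $F(J)$ are small in $\cN$, and together with cocompleteness of $\cN$ this licenses the small object argument applied to each of $F(I)$ and $F(J)$. It yields functorial factorizations of any map as an $F(I)$-cell complex followed by an $F(I)$-injective map, and as an $F(J)$-cell complex followed by an $F(J)$-injective map; by the previous paragraph the right factors are the acyclic fibrations and the fibrations of $\cN$, respectively. For the left factors I must check they are acyclic cofibrations in the appropriate sense. Every $F(I)$-cell map is a cofibration by the very definition of the cofibrations. For $F(J)$: each $j\in J$ is an acyclic cofibration, hence lifts against every fibration of $\cM$; a short adjunction argument (an acyclic fibration $p$ of $\cN$ has $Gp$ an acyclic fibration, so $j$ lifts against $Gp$, whence $Fj$ lifts against $p$) shows that each $Fj$, and thus each $F(J)$-cell map, is a cofibration; and such maps are weak equivalences by exactly condition (2), which asserts that $G$ sends transfinite composites of pushouts of $F(J)$ to weak equivalences. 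Thus the second factorization is of the form (acyclic cofibration, fibration), giving MC5, and $F(J)$ is a generating set of acyclic cofibrations.

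Finally, the nontrivial half of the lifting axiom MC4, that acyclic cofibrations lift against fibrations, follows from the standard retract argument: factor an acyclic cofibration $f$ as an $F(J)$-cell map followed by a fibration, apply two-out-of-three to conclude the fibration is a weak equivalence hence an acyclic fibration, lift $f$ (a cofibration) against it to exhibit $f$ as a retract of an $F(J)$-cell map, and transport the lifting property against fibrations along this retract. The complementary half of MC4 is the definition of the cofibrations. The Quillen pair claim is then immediate, since $F$ carries $I$ to cofibrations and $J$ to acyclic cofibrations and so preserves cofibrations and acyclic cofibrations. The only genuine obstacle is establishing that $F(J)$-cell maps are weak equivalences, and this is precisely where hypothesis (2) is used; every remaining step is formal manipulation of lifting properties together with the small object argument.
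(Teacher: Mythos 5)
Your proposal is correct, and nothing needs repair: the definition of the classes via $G$, the adjunction translation of (acyclic) fibrations into $F(J)$- and $F(I)$-injectivity, the small object argument licensed by condition (1), the single essential use of condition (2) to see that relative $F(J)$-cell complexes are weak equivalences, and the retract argument for the nontrivial half of the lifting axiom together constitute a complete proof, including the correct handling of the one delicate point (smallness of the domains of $F(I)$ and $F(J)$, which is exactly what condition (1) packages). For comparison: the paper itself gives no proof of this statement---it invokes it as a classical theorem of Kan, citing \cite[Theorem 11.3.2]{hirschhorn}---and your argument is precisely the standard proof found in that reference, so there is no divergence of approach to report.
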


\begin{proof}[Proof of \cref{transferredmodelstructure}]
The Joyal model structure is cofibrantly generated, as mentioned in \cref{Joyalmodelstructure}, and by \cref{limitsinpder}, the category $\pder^{st}$ is complete and cocomplete. We now check that the conditions of \cref{theoremtransferredmodelstructure} hold for the desired adjunction.
\begin{enumerate}
\item[(1)] The functor $R$ preserves all colimits by \cref{adjointsofR}, so in particular it preserves filtered colimits.
\item[(2)]
Since $R$ preserves all colimits, to check the second condition it is enough to show that the image under $RL$ of any generating trivial cofibration of the Joyal model structure is again a trivial cofibration. The natural isomorphism $RL\cong \id_{\pder^{st}}$ from \cref{rightinverse2} finishes the proof.
\qedhere
\end{enumerate}
\end{proof}

The following validates the model category of prederivators as a model for the homotopy theory of $(\infty,1)$-categories.

\begin{thm}
\label{Quillenequivalence}
The Quillen pair
$$L\colon\sset\rightleftarrows\pder^{st}\colon R$$
from \cref{transferredmodelstructure} is a Quillen equivalence.
\end{thm}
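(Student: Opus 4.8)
The plan is to verify one of the standard characterizations of when a Quillen pair is a Quillen equivalence, for instance the criterion recorded in \cite[Proposition 1.3.13]{hovey}: a Quillen pair $(L,R)$ is a Quillen equivalence if and only if the right adjoint $R$ reflects weak equivalences between fibrant objects \emph{and}, for every cofibrant object $X$, the derived unit $X\to R((LX)^{\mathrm{fib}})$ is a weak equivalence. The whole point is that, by \cref{rightinverse2}, the strict unit $\eta_X\colon X\to RLX$ is already an isomorphism, so both conditions reduce to formal bookkeeping.

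First I would dispatch the reflection condition. By construction of the transferred model structure in \cref{transferredmodelstructure}, the weak equivalences of $\pder^{st}$ are \emph{by definition} the maps $f$ for which $Rf$ is a categorical equivalence. Hence $f$ is a weak equivalence precisely when $Rf$ is one, so $R$ creates, and in particular reflects, weak equivalences among all prederivators; a fortiori it reflects them between fibrant objects. This requires no additional input.

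Next I would treat the derived unit. In the Joyal model structure every simplicial set is cofibrant, since the cofibrations are the monomorphisms and $\emptyset\hookrightarrow X$ is always a monomorphism; so it suffices to show that for every $X\in\sset$ the composite $X\xrightarrow{\eta_X}RLX\xrightarrow{R\lambda}R((LX)^{\mathrm{fib}})$ is a weak equivalence, where $\lambda\colon LX\to(LX)^{\mathrm{fib}}$ is a fibrant replacement in $\pder^{st}$. The map $\eta_X$ is an isomorphism by \cref{rightinverse2}. The map $\lambda$ is a weak equivalence in $\pder^{st}$, so $R\lambda$ is a weak equivalence in $\sset$ by the very definition of the transferred weak equivalences. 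Being the composite of an isomorphism and a weak equivalence, the derived unit is then a weak equivalence, which completes the verification.

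I do not expect a genuine obstacle: the essential content was already isolated in \cref{rightinverse2}, namely the identity $RL\cong\id_{\sset}$ realized by the unit, and it is exactly this identity that collapses both required conditions to immediate consequences of the transfer. The only point demanding minor care is to keep the strict unit $\eta_X$ (an isomorphism) distinct from the derived unit (which additionally incorporates a fibrant replacement), and to observe that their discrepancy is measured precisely by $R$ applied to a weak equivalence, hence is harmless.
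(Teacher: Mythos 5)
Your proof is correct and takes essentially the same approach as the paper: both arguments verify that $R$ creates (hence reflects) weak equivalences directly from the definition of the transferred model structure, and that the derived unit on every cofibrant object is a weak equivalence, with \cref{rightinverse2} supplying the key isomorphism $\eta_X\colon X\cong RLX$. The only cosmetic difference is that the paper proves its own reduction criterion (\cref{QuillenequivalenceRcreates}) rather than citing the standard characterization from Hovey, and it absorbs your explicit fibrant-replacement step into the remark that the unit and the derived unit coincide because $R$ preserves all weak equivalences.
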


The proof makes use of the following standard result about Quillen equivalences. 

\begin{prop}\label{QuillenequivalenceRcreates}
If in a Quillen pair $L\colon \cC\rightleftarrows \cD\colon R$ the right adjoint $R$ creates weak equivalences and the unit on any cofibrant object is a weak equivalence, then the Quillen pair is a Quillen equivalence.
\end{prop}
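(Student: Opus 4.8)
The plan is to reduce to the standard characterization of Quillen equivalences in terms of the derived unit. Recall from \cite[Corollary 1.3.16]{hovey} that a Quillen pair $L\colon\cC\rightleftarrows\cD\colon R$ is a Quillen equivalence precisely when $R$ reflects weak equivalences between fibrant objects and, for every cofibrant object $c\in\cC$, the derived unit
$$c\xrightarrow{\eta_c}RLc\xrightarrow{Rj}R\big((Lc)^{\mathrm{fib}}\big)$$
is a weak equivalence, where $j\colon Lc\to(Lc)^{\mathrm{fib}}$ is any fibrant replacement of $Lc$. I would verify these two conditions in turn.

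First I would unwind the hypothesis that $R$ creates weak equivalences, which by definition means that a map $f$ of $\cD$ is a weak equivalence if and only if $Rf$ is a weak equivalence of $\cC$. This biconditional says exactly that $R$ both preserves and reflects weak equivalences, so the required reflection condition (indeed between all objects, hence in particular between fibrant ones) is immediate. For the derived unit, I fix a cofibrant $c$ and a fibrant replacement $j\colon Lc\to(Lc)^{\mathrm{fib}}$, which is by construction a weak equivalence. Since $R$ preserves weak equivalences, $Rj$ is a weak equivalence; by hypothesis $\eta_c$ is a weak equivalence; hence the composite derived unit displayed above is a weak equivalence by two-out-of-three. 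This establishes the second condition, and the cited characterization then yields that $(L,R)$ is a Quillen equivalence.

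The argument is essentially formal once the meaning of ``creates weak equivalences'' is correctly extracted, so there is no serious obstacle; the only point requiring care is recognizing that this single hypothesis must supply both the preservation used in the $Rj$ step and the reflection used for the first condition. If one prefers to avoid invoking the derived-unit criterion and instead check the derived counit directly, this too can be done cheaply: for fibrant $d$ with cofibrant replacement $q\colon(Rd)^{\mathrm{cof}}\to Rd$, naturality of $\eta$ together with the triangle identity $R\epsilon_d\circ\eta_{Rd}=\mathrm{id}_{Rd}$ gives
$$R(\epsilon_d\circ Lq)\circ\eta_{(Rd)^{\mathrm{cof}}}=q,$$
so, since both $q$ and the unit on the cofibrant object $(Rd)^{\mathrm{cof}}$ are weak equivalences, $R(\epsilon_d\circ Lq)$ is a weak equivalence by two-out-of-three, and reflection of weak equivalences then forces the derived counit $\epsilon_d\circ Lq$ to be a weak equivalence as well. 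Either route completes the verification.
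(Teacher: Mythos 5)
Your proposal is correct, and its primary route is in substance the paper's own argument: both proofs rest on exactly two observations --- that creation of weak equivalences means $R$ both preserves and reflects them, and that preservation makes the derived unit $c\to RLc\to R\big((Lc)^{\mathrm{fib}}\big)$ a weak equivalence precisely when the ordinary unit $\eta_c$ is. The only real difference is packaging: where you invoke the standard criterion (Hovey's Corollary 1.3.16, reflection of weak equivalences between fibrant objects plus the derived-unit condition), the paper verifies the definition of Quillen equivalence directly, showing for cofibrant $X$ and fibrant $Y$ that $f\colon LX\to Y$ is a weak equivalence if and only if $f^{\sharp}=R(f)\circ\eta_X$ is, via two-out-of-three in the commuting triangle together with the creation hypothesis; your citation outsources precisely that bookkeeping. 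Your alternative verification of the derived counit via the triangle identity is correct and does not appear in the paper, but note that it complements rather than replaces the derived-unit check: the counit condition alone does not characterize Quillen equivalences, so ``either route completes the verification'' should be read as ``the first route, or the first route's unit computation combined with the second's counit computation.''
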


\begin{proof}
To show that $(L,R)$ is a Quillen equivalence it is enough to show that for a cofibrant $X$ in $\cC$ and a fibrant $Y$ in $\cD$, a map $f:L(X) \to Y$ is a weak equivalence in $\cD$ if and only if its adjoint $f^{\sharp}: X \to R(Y)$ is a weak equivalence in $\cC$. 

We first observe that the derived unit at $X$ coincides with the unit $\eta_X$, since $R$ preserves all weak equivalences, and it is therefore a weak equivalence. Given that the following diagram commutes 
$$
\xymatrix@C=4pc{X \ar[r]^-{\eta_X}_-{\simeq} \ar[dr]_-{f^{\sharp}}& R(L(X)) \ar[d]^-{R(f)} \\ 
& R(Y),}
$$
the adjoint map $f^{\sharp}$ is a weak equivalence if and only if $R(f)$ is a weak equivalence. Finally, since $R$ creates weak equivalences, this is true if and only if $f$ is a weak equivalence.
\end{proof}

We are now ready to prove the Quillen equivalence.

\begin{proof}[Proof of \cref{Quillenequivalence}]
By \cref{rightinverse2}, the unit of this adjunction is an isomorphism. Since $R$ preserves all weak equivalences the unit on any cofibrant object is also the derived unit. Given that $R$ creates weak equivalences, it follows from \cref{QuillenequivalenceRcreates} that the adjunction is a Quillen equivalence.
\end{proof}

\begin{digression}
The adjunction
$$L\colon\sset\rightleftarrows\pder^{st}\colon R$$
is of the same nature as the adjunction
$$p_1^*\colon \sset\rightleftarrows\ssset\colon i_1^*$$
from \cite[\textsection 4]{JT}, where the right adjoint $i_1^*$ restricts a bisimplicial set to its $0$-th row. This adjunction was also proven in \cite[Theorem 4.11]{JT} to be a Quillen equivalence between the Joyal model structure on $\sset$ and the Rezk model structure for complete Segal spaces on $\ssset$.

However, the role played by the two adjoint pairs of functors is not fully analogous, in that the model structure for complete Segal spaces is not transferred from the Joyal model structure using the right adjoint $i_1^*$. A bisimplicial set whose $0$-th row is a quasi-category is not necessarily a complete Segal space.

Notice that we can use the same technique as the one presented above to transfer the Joyal model structure to the category of bisimplicial sets, using the functor $i^*_1$, obtaining a Quillen equivalence. The identity functor from the Rezk model structure to this transferred model structure is right Quillen.

\end{digression}

We now give  a more explicit description of  the model structure from \cref{transferredmodelstructure} on the category $\pder^{st}$, starting with the class of cofibrations.

Then we will characterize the fibrant prederivators, the fibrations between fibrant prederivators and the acyclic fibrations in terms of suitable lifting properties. The following results are straightforward consequences of \cref{transferredmodelstructure} and the characterisations of corresponding classes of maps in the Joyal model structure on $\sset$, which were recalled in \cref{Joyalmodelstructure}.  

We begin by introducing some notation.

\begin{notn}
\label{horn}
For $n\ge0$ and $0\le k\le n$, the $k$-th horn of the representable prederivator $D_{[n]}$ is the prederivator
$$\boldsymbol{\Lambda}^k_{[n]}:=L(\Lambda^k[n]).$$
It comes with a canonical cofibration of prederivators
$$\boldsymbol{\Lambda}^k_{[n]}\to D_{[n]}$$
induced by the simplicial horn inclusion $\Lambda^k[n]\hookrightarrow\Delta[n]$.
\end{notn}

\begin{notn} For $n\ge1$, the boundary of the representable prederivator $D_{[n]}$ is the prederivator
$$\boldsymbol{\partial}D_{[n]}:=L(\partial\Delta[n]).$$
It comes with a canonical cofibration of prederivators
$$\boldsymbol{\partial}D_{[n]}\to D_{[n]}$$
induced by the simplicial boundary inclusion $\partial\Delta[n]\hookrightarrow\Delta[n]$.
\end{notn}

\begin{cor}
A map of prederivators $\varphi\colon\mathbb D\to\mathbb E$ is a cofibration if and only if it can be expressed as a retract of transfinite compositions of pushouts of boundary  maps $\boldsymbol{\partial}D_{[n]}\to D_{[n]}$ for $n\ge1$.
\end{cor}

\begin{cor}
\label{fibrantlifting}
A prederivator $\mathbb D$ is fibrant if and only if it has the right lifting property  with respect to all inner horn cofibrations  $\boldsymbol{\Lambda}^k_{[n]}\to D_{[n]}$ for $n\ge2$ and $0<k<n$.
\end{cor}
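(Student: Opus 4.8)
The plan is to reduce the statement to the characterization of quasi-categories in the Joyal model structure by combining the transfer with the adjunction $(L,R)$. First I would observe that, since the model structure on $\pder^{st}$ is transferred along $R$ by \cref{transferredmodelstructure}, a prederivator $\mathbb{D}$ is fibrant precisely when the unique map $\mathbb{D}\to\ast$ to the terminal prederivator is a fibration, which by the definition of the transferred structure holds if and only if $R(\mathbb{D}\to\ast)$ is a fibration in the Joyal model structure. Because $R$ is a right adjoint by \cref{mainadjunction}, it preserves the terminal object, so $R\ast\cong\Delta[0]$. Now a map of simplicial sets with codomain $\Delta[0]$ is a Joyal fibration exactly when its domain is a fibrant object, i.e.\ a quasi-category, by \cref{Joyalmodelstructure}. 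Hence $\mathbb{D}$ is fibrant if and only if $R\mathbb{D}$ is a quasi-category.

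Next I would translate the quasi-category condition into a lifting property. By \cref{Joyalmodelstructure} (equivalently, by the defining property of a quasi-category), $R\mathbb{D}$ is a quasi-category if and only if it has the right lifting property with respect to all inner horn inclusions $\Lambda^k[n]\hookrightarrow\Delta[n]$ with $n\ge 2$ and $0<k<n$. Finally I would invoke the standard adjunction fact that a map $p$ has the right lifting property against $Lf$ if and only if $Rp$ has the right lifting property against $f$: applying this to $p=(\mathbb{D}\to\ast)$ and to the inner horn inclusions, and recalling that $L(\Lambda^k[n]\hookrightarrow\Delta[n])$ is exactly the named cofibration $\boldsymbol{\Lambda}^k_{[n]}\to D_{[n]}$ by \cref{horn} together with $D_{[n]}=L\Delta[n]$ from \cref{rightinverse2}, this says that $R\mathbb{D}$ has the right lifting property against all inner horn inclusions if and only if $\mathbb{D}$ has the right lifting property against all inner horn cofibrations $\boldsymbol{\Lambda}^k_{[n]}\to D_{[n]}$. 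Chaining these equivalences yields the claim.

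I expect no serious obstacle here, as the argument is a formal consequence of the transfer theorem and the lifting-property adjunction. The only points requiring minor care are the identification $R\ast\cong\Delta[0]$ (immediate from limit-preservation of $R$) and the bookkeeping that $L$ of the inner horn inclusion is precisely $\boldsymbol{\Lambda}^k_{[n]}\to D_{[n]}$; both are routine.
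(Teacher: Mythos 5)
Your proof is correct and is essentially the paper's own argument: the paper treats this corollary as a direct consequence of the transferred model structure (\cref{transferredmodelstructure}), the characterization of fibrant objects in the Joyal model structure (\cref{Joyalmodelstructure}), and the standard adjunction correspondence between lifting against $Lf$ and lifting of the $R$-image against $f$. Your spelled-out chain of equivalences, including the identifications $R\ast\cong\Delta[0]$ and $L(\Lambda^k[n]\hookrightarrow\Delta[n])=(\boldsymbol{\Lambda}^k_{[n]}\to D_{[n]})$, is exactly the intended reasoning (the only trivial quibble being that $L\Delta[n]\cong D_{[n]}$ comes from \cref{mainadjunction} rather than \cref{rightinverse2}).
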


\begin{cor}
A map between fibrant prederivators $\varphi\colon \mathbb D\to\mathbb E$ is a fibration if and only if it has the right lifting property with respect to all inner horn cofibrations  $\boldsymbol{\Lambda}^k_{[n]}\to D_{[n]}$ for $n\ge2$ and $0<k<n$ and with respect to either of the two canonical maps $D_{[0]}\to D_{\mathbb I}$.
\end{cor}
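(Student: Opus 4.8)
The plan is to deduce the statement from the definition of the transferred model structure (\cref{transferredmodelstructure}), Joyal's characterization of the fibrations between quasi-categories (\cref{Joyalmodelstructure}), and the adjunction $(L,R)$ of \cref{mainadjunction}. Since fibrations and weak equivalences in $\pder^{st}$ are created by $R$, a map $\varphi\colon\mathbb D\to\mathbb E$ is a fibration if and only if $R\varphi$ is a Joyal fibration; and since $\mathbb D$ and $\mathbb E$ are fibrant, $R\mathbb D$ and $R\mathbb E$ are quasi-categories. By \cref{Joyalmodelstructure}, $R\varphi$ is then a fibration exactly when it has the right lifting property against the inner horn inclusions $\Lambda^k[n]\hookrightarrow\Delta[n]$ (for $n\ge2$, $0<k<n$) and against either inclusion $\Delta[0]\to N\mathbb I$.

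First I would dispatch the inner horns. Using $L\Lambda^k[n]=\boldsymbol{\Lambda}^k_{[n]}$ and $L\Delta[n]=D_{[n]}$ (\cref{horn}), the adjunction $(L,R)$ turns the right lifting property of $R\varphi$ against $\Lambda^k[n]\hookrightarrow\Delta[n]$ into the right lifting property of $\varphi$ against $\boldsymbol{\Lambda}^k_{[n]}\to D_{[n]}$, matching the first family in the statement. Thus $R\varphi$ is an inner fibration between quasi-categories if and only if $\varphi$ has the right lifting property against all inner horn cofibrations.

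The remaining, and genuinely delicate, point is the isomorphism-lifting condition. The naive adjunction argument would replace $\Delta[0]\to N\mathbb I$ by $L(\Delta[0]\to N\mathbb I)=\big(D_{[0]}\to LN\mathbb I\big)$, whereas the statement uses the representable map $D_{[0]}\to D_{\mathbb I}$. These do not agree on the nose: unlike $L\Delta[n]=D_{[n]}$, the functor $L$ does not send a general nerve $NK$ to the representable $D_K$---this is precisely the failure recorded in \cref{Ldoesnotrespectproducts}---so $LN\mathbb I\not\cong D_{\mathbb I}$. The bridge is the canonical comparison $c\colon LN\mathbb I\to D_{\mathbb I}$, the adjunct of the isomorphism $N\mathbb I\cong RD_{\mathbb I}$ coming from \cref{rightinverse3}. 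Since $D_{\mathbb I}\cong\rep{N\mathbb I}$ by \cref{DversusHo}, the identifications $RLN\mathbb I\cong N\mathbb I$ (\cref{rightinverse2}) and $RD_{\mathbb I}\cong N\mathbb I$ show that $Rc$ is an isomorphism; as $R$ creates both fibrations and weak equivalences, $c$ is a trivial fibration between (fibrant) prederivators, and it satisfies $c\circ L(\Delta[0]\to N\mathbb I)=\big(D_{[0]}\to D_{\mathbb I}\big)$.

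The main obstacle is then to show that, for $\varphi$ an inner fibration between fibrant prederivators, the right lifting property against $D_{[0]}\to LN\mathbb I$ is equivalent to that against $D_{[0]}\to D_{\mathbb I}$. I would do this by reinterpreting the latter, via $R$, as the assertion that $R\varphi$ lifts isomorphisms, i.e. is an isofibration, which by \cref{Joyalmodelstructure} is exactly what upgrades an inner fibration between quasi-categories to a fibration. Choosing a section $s$ of the trivial fibration $c$ compatible with the two point inclusions, one converts a lifting problem against $D_{[0]}\to D_{\mathbb I}$ into one against $D_{[0]}\to LN\mathbb I$ and conversely; the essential observation is that the image under $R$ of the resulting lift is forced to be the prescribed isomorphism in $R\mathbb D$, because the adjunction identities together with $RL\cong\id$ and $Rc=\id$ pin down the $R$-level data. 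The same reasoning applies symmetrically to the opposite endpoint, yielding both canonical maps $D_{[0]}\to D_{\mathbb I}$. The step where I expect to spend the most care is producing the section $s$---equivalently, verifying that the point inclusion $D_{[0]}\to D_{\mathbb I}$ has the left lifting property against the trivial fibration $c$---and checking that the translation of lifting squares preserves the prescribed compatibility data.
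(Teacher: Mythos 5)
Your formal skeleton---fibrations and weak equivalences created by $R$ (\cref{transferredmodelstructure}), the characterization of fibrations between quasi-categories in \cref{Joyalmodelstructure}, and adjoint transposition of the inner-horn liftings using $\boldsymbol{\Lambda}^k_{[n]}=L(\Lambda^k[n])$ and $D_{[n]}=L(\Delta[n])$---is exactly the paper's intended ``straightforward consequence,'' and you are right that the only non-formal point is the discrepancy between $L(N\mathbb I)$ and $D_{\mathbb I}$. (Note, however, that \cref{Ldoesnotrespectproducts} concerns $N([1]\times[1])$ and does not by itself decide whether $L(N\mathbb I)\cong D_{\mathbb I}$.) The fatal gap is your bridge: no section $s$ of $c\colon L(N\mathbb I)\to D_{\mathbb I}$ can exist, basepoint-compatible or not. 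Indeed, by \cref{rightinverse2} the assignment $f\mapsto Rf$ is injective on morphisms out of $L(N\mathbb I)$, being the adjunction bijection of \cref{mainadjunction} composed with the isomorphism $(-)\circ\eta_{N\mathbb I}$; and you have already shown $Rc$ is an isomorphism. Hence $cs=\id$ forces $R(sc)=Rs\circ Rc=(Rc)^{-1}\circ Rc=\id=R(\id)$, and therefore $sc=\id$, i.e.\ any section of $c$ is a two-sided inverse. So ``produce the section'' is not a step requiring care: it is literally equivalent to $L(N\mathbb I)\cong D_{\mathbb I}$, which is false. Concretely, at $J=[0]\amalg[0]$ the objects of $(LN\mathbb I)(J)=\colim_{[n]\to\mathbb I}[n]^J$ are classes of pairs $(f\colon[n]\to\mathbb I,(i,j))$, and whenever $f(i)\neq f(j)$ the relative order of $i$ and $j$ is preserved by every structure map of the indexing diagram; thus $((a,b),(0,1))$ and $((b,a),(1,0))$ represent distinct objects having the same image in $\mathbb I^J\cong\mathbb I\times\mathbb I$, so $c_J$ is not injective on objects and $c$ is not an isomorphism.

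Worse, the equivalence you call the ``main obstacle'' is not merely delicate---it fails, so it cannot be patched by any argument. The map $c$ itself is a map between fibrant prederivators and is a (trivial) fibration, since $Rc$ is an isomorphism; yet it does not have the right lifting property against $D_{[0]}\to D_{\mathbb I}$, because a lift in the square whose top is $L(\Delta[0]\to N\mathbb I)$ and whose bottom is $\id_{D_{\mathbb I}}$ would be precisely a section of $c$. Consequently the two lifting conditions are genuinely inequivalent for general fibrant prederivators. What the paper's own one-line argument (transfer plus \cref{Joyalmodelstructure} plus adjunction) actually proves---and the form in which the corollary is correct and should be applied---has the map $D_{[0]}\to L(N\mathbb I)$ in place of $D_{[0]}\to D_{\mathbb I}$; equivalently, $\varphi$ is a fibration if and only if $R\varphi$ lifts against the inner horns and either vertex inclusion $\Delta[0]\to N\mathbb I$. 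With that reading, your first paragraph already completes the proof and nothing delicate remains. The $D_{\mathbb I}$-version is recovered when $\mathbb D$ and $\mathbb E$ are quasi-representable, since \cref{theoremcarlson} identifies maps out of $D_{\mathbb I}\cong\rep{N\mathbb I}$ into such prederivators with simplicial maps out of $N\mathbb I$; but a fibrant prederivator such as $L(N\mathbb I)$ need not be quasi-representable, and that is exactly where both your strategy and the literal statement break down.
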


\begin{cor}
A map of prederivators $\varphi\colon\mathbb D\to\mathbb E$ is an acyclic fibration if and only if it has the right lifting property  with respect to all boundary cofibrations   $\boldsymbol{\partial}D_{[n]}\to D_{[n]}$ for $n\ge1$.
\end{cor}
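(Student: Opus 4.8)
The plan is to reduce the statement to Joyal's characterization of acyclic fibrations in $\sset$ (recalled in \cref{Joyalmodelstructure}), using the defining property of the transferred model structure together with the adjunction $(L,R)$. First I would invoke \cref{transferredmodelstructure}: in the transferred model structure both the fibrations and the weak equivalences are created by $R$. Consequently a map $\varphi\colon\mathbb D\to\mathbb E$ is an acyclic fibration in $\pder^{st}$ if and only if $R\varphi$ is simultaneously a fibration and a weak equivalence in the Joyal model structure, i.e.\ if and only if $R\varphi$ is an acyclic fibration in $\sset$.

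Next I would apply Joyal's theorem (\cref{Joyalmodelstructure}), which states that the acyclic fibrations in $\sset$ are precisely the maps having the right lifting property with respect to the boundary inclusions $\partial\Delta[n]\hookrightarrow\Delta[n]$ for $n\ge1$. Thus $\varphi$ is an acyclic fibration if and only if $R\varphi$ has the right lifting property against every such boundary inclusion.

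Finally I would transfer the lifting condition across the adjunction $(L,R)$ from \cref{mainadjunction}. Since $L$ sends a representable $\Delta[n]$ to $D_{[n]}$ and, by the defining notation, $\boldsymbol{\partial}D_{[n]}=L(\partial\Delta[n])$, the boundary cofibration $\boldsymbol{\partial}D_{[n]}\to D_{[n]}$ is exactly the image under $L$ of the simplicial boundary inclusion. By the standard correspondence between adjoint lifting problems, $R\varphi$ has the right lifting property against $\partial\Delta[n]\hookrightarrow\Delta[n]$ if and only if $\varphi$ has the right lifting property against $L(\partial\Delta[n]\hookrightarrow\Delta[n])=\boldsymbol{\partial}D_{[n]}\to D_{[n]}$. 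Chaining the three equivalences yields the claim.

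This proof is purely formal and structurally identical to the proofs of the companion corollaries characterizing fibrant objects and fibrations; I do not expect a genuine obstacle. The only point requiring a moment of care is the identification $D_{[n]}\cong L(\Delta[n])$ and the verification that $L$ applied to the boundary inclusion coincides with the named boundary cofibration, but this is immediate from \cref{rightinverse2} and the construction of $L$ in \cref{mainadjunction}, so I would state it without dwelling on it.
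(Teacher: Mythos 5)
Your proof is correct and is exactly the argument the paper intends: the paper states this corollary as a "straightforward consequence" of \cref{transferredmodelstructure} and \cref{Joyalmodelstructure}, and your three-step chain (creation of acyclic fibrations by $R$, Joyal's characterization, and the adjoint lifting correspondence identifying $\boldsymbol{\partial}D_{[n]}\to D_{[n]}$ with $L(\partial\Delta[n]\hookrightarrow\Delta[n])$) is precisely that omitted argument.
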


\begin{prop}
Any cofibration of prederivators $\varphi\colon\mathbb D\to\mathbb E$ induces an injective function
$$\Ob(\varphi_J)\colon\Ob(\mathbb D(J))\to\Ob(\mathbb E(J))$$
for any $J\in\catfin$.
\end{prop}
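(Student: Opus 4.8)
The plan is to reduce the claim about cofibrations to the class of generating cofibrations, then verify the statement directly for those and check that it is preserved under the relevant closure operations. The key observation is that $\Ob\circ\mathbb D(-)\colon\catfin^{\op}\to\set$ is a functor, so a map of prederivators $\varphi$ induces, for each $J\in\catfin$, a function $\Ob(\varphi_J)\colon\Ob(\mathbb D(J))\to\Ob(\mathbb E(J))$, and I want to show this is injective whenever $\varphi$ is a cofibration. By the first corollary above, every cofibration is a retract of a transfinite composition of pushouts of the boundary maps $\boldsymbol{\partial}D_{[n]}\to D_{[n]}$ for $n\ge1$. Since a retract of an injection is an injection, and injections are closed under transfinite composition and (as I will argue) under pushout in the relevant sense, it suffices to treat the generating cofibrations themselves.

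First I would establish the base case. By \cref{yonedalemma}, for each $J$ there are natural bijections $\Ob(D_{[n]}(J))\cong\Hom_{\pder^{st}}(D_J,D_{[n]})\cong\Ob(D_{[n]}(J))$, and more usefully the whole functor $\Ob\circ(-)$ applied to $L$ can be computed via \cref{mainadjunction} and \cref{rightinverse2}: one has $\Ob((LY)(J))\cong\Hom_{\pder^{st}}(D_J, LY)$, and I would like to identify this with a hom-set of simplicial sets into $RL Y\cong Y$. Concretely, using \cref{rightinverse3} and the adjunction $(L,R)$, the object sets of $\boldsymbol{\partial}D_{[n]}(J)=L(\partial\Delta[n])(J)$ and $D_{[n]}(J)=L(\Delta[n])(J)$ should be expressible in terms of maps $NJ\to\partial\Delta[n]$ and $NJ\to\Delta[n]$ respectively, for which the inclusion $\partial\Delta[n]\hookrightarrow\Delta[n]$ being a monomorphism immediately forces the induced map on hom-sets out of $NJ$ to be injective. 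This handles the generating cofibrations.

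The main work, and the step I expect to be the genuine obstacle, is showing that injectivity of $\Ob(\varphi_J)$ is stable under pushout. By \cref{limitsinpder} colimits in $\pder^{st}$ are computed pointwise in $\cat$, so a pushout of prederivators evaluates at $J$ to a pushout in $\cat$; however, the functor $\Ob\colon\cat\to\set$ does \emph{not} preserve pushouts in general, so I cannot simply transport injectivity across the pushout square naively. The resolution I would pursue is to avoid pointwise pushouts in $\cat$ altogether and instead argue at the level of the functor $R$ together with \cref{adjointsofR}: since $R$ preserves all colimits and sends a cofibration to a monomorphism of simplicial sets (the cofibrations being generated by images under $L$ of monomorphisms, which \cref{rightinverse2} sends back to monomorphisms), one gets that $R\varphi$ is a levelwise injection of simplicial sets. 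The bridge to the statement is then \cref{mainadjunction}, which identifies $(R\mathbb D)_n=\Ob(\mathbb D([n]))$, giving injectivity of $\Ob(\varphi_{[n]})$ for all $n$; extending from the objects $[n]$ to a general $J\in\catfin$ would use \cref{remarkcolimits} to write $NJ$ as a colimit of representables and the fact that $\Ob(\mathbb D(J))\cong\Hom_{\sset}(NJ,R\mathbb D)$ computes as a limit, under which injectivity of $R\varphi$ levelwise yields injectivity of $\Ob(\varphi_J)$. The delicate point to get right is whether this limit-of-injections argument genuinely applies to \emph{all} cofibrations or only to those $\varphi$ whose source and target satisfy the object-determination property of Condition (1); I would check carefully whether the identification $\Ob(\mathbb D(J))\cong\Hom_{\sset}(NJ,R\mathbb D)$ holds for arbitrary prederivators or whether one must instead run the retract/transfinite-composition argument entirely inside $\sset$ via $R$ and then descend.
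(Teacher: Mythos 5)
Your skeleton --- check the generating cofibrations, then propagate injectivity through retracts, transfinite compositions and pushouts --- is exactly the paper's, but both substantive steps of your write-up have genuine gaps.

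The pushout step rests on a false premise. The functor $\Ob\colon\cat\to\set$ \emph{does} preserve pushouts: it has a right adjoint, the codiscrete-category functor (this is the functor $codisc$ appearing in the paper's own proof of \cref{adjointsofR}), as well as a left adjoint, so it preserves all colimits; retracts are preserved by any functor. Combined with \cref{limitsinpder} (colimits in $\pder^{st}$ are pointwise), the assignment $\mathbb D\mapsto\Ob(\mathbb D(J))$ carries a cell presentation of a cofibration to a retract of a transfinite composition of pushouts of injections of sets, and monomorphisms in $\set$ are closed under all three operations; that is the paper's entire proof. The detour through $R$ that you build to avoid this point cannot replace it: the identification $(R\mathbb D)_n\cong\Ob(\mathbb D([n]))$ only yields injectivity of $\Ob(\varphi_J)$ for $J=[n]$, and your proposed extension to general $J$ uses $\Ob(\mathbb D(J))\cong\Hom_{\sset}(NJ,R\mathbb D)$, which is precisely Condition (1) of \cref{definitionrepresentable} and fails for general prederivators --- including cofibrant ones: $L(\Delta[1]\times\Delta[1])$ is cofibrant, yet by \cref{Ldoesnotrespectproducts} the canonical map $\Ob(L(\Delta[1]\times\Delta[1])(\Gamma))\to\Hom_{\sset}(N\Gamma,\Delta[1]\times\Delta[1])\cong\Hom_{\sset}(N\Gamma,RL(\Delta[1]\times\Delta[1]))$ is not surjective. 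So the point you flagged as ``delicate'' is in fact where that route breaks, and it cannot be repaired.

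Your base case is also flawed. The adjunction of \cref{mainadjunction} computes maps \emph{out of} $LX$, namely $\Hom_{\pder^{st}}(LX,\mathbb E)\cong\Hom_{\sset}(X,R\mathbb E)$; it says nothing about maps \emph{into} $LY$, so $\Ob((LY)(J))\cong\Hom_{\pder^{st}}(D_J,LY)$ cannot be converted into $\Hom_{\sset}(NJ,Y)$. Indeed the claimed identification $\Ob(\boldsymbol{\partial}D_{[n]}(J))\cong\Hom_{\sset}(NJ,\partial\Delta[n])$ is false: since $\Ob(L(\partial\Delta[2])(\Gamma))\cong\colim_{\Delta[m]\to\partial\Delta[2]}\Hom_{\cat}(\Gamma,[m])$, every element factors through a single simplex of $\partial\Delta[2]$, whereas the map $N\Gamma\to\partial\Delta[2]$ sending the two legs of the span $\Gamma$ to two different edges does not --- the same phenomenon as in \cref{Ldoesnotrespectproducts}. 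What is true (and is the real content of the base case, which the paper asserts without proof) is only the injectivity of the canonical map $\Ob(\boldsymbol{\partial}D_{[n]}(J))\to\Ob(D_{[n]}(J))\cong\Hom_{\cat}(J,[n])$; one way to prove it is an Eilenberg--Zilber-style normal form, representing each element of the colimit by a pair consisting of a nondegenerate simplex $[m]\hookrightarrow[n]$ of $\partial\Delta[n]$ and a functor $J\to[m]$ surjective on objects, such a pair being uniquely recoverable from the composite $J\to[n]$. (This requires $J\neq\emptyset$; for $J=\emptyset$ and $n=1$ injectivity genuinely fails, since $\boldsymbol{\partial}D_{[1]}(\emptyset)\cong[0]\amalg[0]$ while $D_{[1]}(\emptyset)\cong[0]$ --- an edge case that the statement and the paper's proof overlook as well.)
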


\begin{proof}
The boundary cofibrations $\boldsymbol{\partial}D_{[n]}\to D_{[n]}$ induce injective functions
$$\Ob(\boldsymbol{\partial}D_{[n]}(J))\to\Ob(D_{[n]}(J))$$
for any $J\in\catfin$.
The result now follows from the fact that the functor $\Ob$ preserves retracts, transfinite compositions, and pushouts.
\end{proof}

\begin{rmk}
By Condition (3) of \cref{definitionrepresentable}, all quasi-representable prederivators are fibrant. Moreover, every prederivator is weakly equivalent to a quasi-representable one. Indeed, if $f\colon \mathbb D\to\widetilde{\mathbb D}$ is a fibrant replacement for a prederivator $\mathbb D$, we can consider the zig-zag
$$\mathbb D\stackrel{f}{\longrightarrow}\widetilde{\mathbb D}\stackrel{\varepsilon_{\widetilde{\mathbb D}}}{\longleftarrow} LR(\widetilde{\mathbb D})\stackrel{}{\longrightarrow} \rep{R(\widetilde{\mathbb D})},$$
where the last map is the adjoint of the isomorphism
$$R(\widetilde{\mathbb D})\cong R\rep{R(\widetilde{\mathbb D})}$$
from \cref{rightinverse2}.
They are all weak equivalences, as a consequence of \cref{rightinverse,rightinverse2}.

Thus, the homotopy theory of (fibrant) prederivators is recoved by the homotopy theory of quasi-representable prederivators, which is isomorphic by \cref{theoremcarlson} to the homotopy theory of quasi-categories.
\end{rmk}

Given that $\mathbb Ho$ is fully faithful from \cref{theoremcarlson}, the following proposition gives a complete description of weak equivalences between prederivators that are in the image of the functor $\mathbb Ho\colon\sset\to\pder^{st}$.

\begin{prop}
The functor $\mathbb Ho$ creates weak equivalences, i.e., a map of simplicial sets $f\colon X\to Y$ is a categorical equivalence if and only if the induced map of prederivators $\rep{f}\colon\rep{X}\to\rep{Y}$ is a weak equivalence.
\end{prop}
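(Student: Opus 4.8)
The plan is to reduce the statement directly to the defining property of the transferred model structure together with the fact, established in \cref{rightinverse}, that $R$ is a left inverse of $\mathbb Ho$. The key observation is that, by \cref{transferredmodelstructure}, the weak equivalences in $\pder^{st}$ are \emph{by definition} precisely those maps that $R$ sends to categorical equivalences in $\sset$. Hence $\rep{f}\colon\rep{X}\to\rep{Y}$ is a weak equivalence of prederivators if and only if $R\rep{f}\colon R\rep{X}\to R\rep{Y}$ is a categorical equivalence of simplicial sets.

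Next I would invoke \cref{rightinverse}, which supplies a \emph{natural} isomorphism $R\rep{X}\cong X$. Naturality means that for the given map $f\colon X\to Y$ the square
$$\xymatrix{R\rep{X}\ar[r]^-{\cong}\ar[d]_-{R\rep{f}}&X\ar[d]^-{f}\\ R\rep{Y}\ar[r]^-{\cong}&Y}$$
commutes, so that $R\rep{f}$ is isomorphic to $f$ as an arrow of $\sset$. Since the class of categorical equivalences is closed under isomorphism, it follows that $R\rep{f}$ is a categorical equivalence if and only if $f$ is.

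Combining the two steps, $\rep{f}$ is a weak equivalence in $\pder^{st}$ if and only if $R\rep{f}$ is a categorical equivalence, if and only if $f$ is a categorical equivalence, which is exactly the asserted biconditional.

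I expect no genuine obstacle here: the entire content is packaged into the definition of the transferred weak equivalences from \cref{transferredmodelstructure} and into the isomorphism of \cref{rightinverse}. The only point requiring a moment of care is confirming the \emph{naturality} of that isomorphism, which is precisely what allows us to identify $R\rep{f}$ with $f$ itself, rather than merely with some unspecified map between isomorphic objects.
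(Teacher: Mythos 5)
Your proposal is correct and follows essentially the same route as the paper's own proof: both reduce the claim to the fact that weak equivalences in $\pder^{st}$ are created by $R$ (the definition of the transferred model structure from \cref{transferredmodelstructure}) and then use the natural isomorphism $R\,\mathbb Ho\cong\id_{\sset}$ of \cref{rightinverse} to identify $R\rep{f}$ with $f$. Your explicit remark about naturality and closure of categorical equivalences under isomorphism is exactly the point the paper leaves implicit.
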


\begin{proof}
Given the isomorphism $R\mathbb Ho\cong\id_{\sset}$ from \cref{rightinverse}, a map of simplicial sets $f\colon X\to Y$ is a categorical equivalence if and only if
$$R\rep{f}\colon R\rep{X}\to R\rep{Y}$$
is also one. Given that weak equivalences of prederivators are by definition created by the functor $R$, this is equivalent to saying that $\rep{f}\colon\rep{X}\to\rep{Y}$ is a weak equivalence of prederivators.
\end{proof}

\begin{prop}
If $f\colon X\to Y$ is a categorical equivalence between quasi-categories, the induced map $\rep f\colon\rep{X}\to\rep{Y}$ is levelwise an equivalence of categories.
\end{prop}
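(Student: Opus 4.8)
The plan is to factor the problem into two independent steps: first to upgrade the given categorical equivalence $f$ to a categorical equivalence between the internal mapping objects $X^{NJ}\to Y^{NJ}$, and then to show that the homotopy category functor $\ho$ carries categorical equivalences between quasi-categories to equivalences of categories. Fixing $J\in\catfin$, recall that by definition of $\rep{X}$ the component $\rep{f}_J$ is exactly $\ho(f^{NJ})\colon\ho(X^{NJ})\to\ho(Y^{NJ})$, so carrying out these two steps yields the conclusion levelwise, for every $J$.

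For the first step, I would invoke the standard fact that the Joyal model structure of \cref{Joyalmodelstructure} is cartesian closed. In particular, for the simplicial set $NJ$ the exponential functor $(-)^{NJ}\colon\sset\to\sset$ is right Quillen, being right adjoint to $-\times NJ$; the latter preserves cofibrations and acyclic cofibrations by the pushout–product axiom. Since $X$ and $Y$ are quasi-categories, hence fibrant, and every simplicial set is cofibrant, $f$ is a weak equivalence between fibrant–cofibrant objects. By Ken Brown's lemma the right Quillen functor $(-)^{NJ}$ sends it to a weak equivalence $f^{NJ}\colon X^{NJ}\to Y^{NJ}$, and moreover $X^{NJ}$ and $Y^{NJ}$ are again quasi-categories because right Quillen functors preserve fibrant objects.

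For the second step, I would use that $X^{NJ}$ and $Y^{NJ}$ are fibrant–cofibrant, so the categorical equivalence $f^{NJ}$ is a homotopy equivalence with respect to the interval $N\mathbb I$: there is a map $g\colon Y^{NJ}\to X^{NJ}$ together with $N\mathbb I$-homotopies $X^{NJ}\times N\mathbb I\to X^{NJ}$ and $Y^{NJ}\times N\mathbb I\to Y^{NJ}$ witnessing $g\circ f^{NJ}\simeq\id$ and $f^{NJ}\circ g\simeq\id$. Applying $\ho$ and using that $\ho$ preserves finite products together with the identification $\ho(N\mathbb I)\cong\mathbb I$, each homotopy becomes a functor $\ho(X^{NJ})\times\mathbb I\to\ho(X^{NJ})$, which is precisely a natural isomorphism between $\ho(g)\circ\ho(f^{NJ})$ and the identity, and symmetrically on the other side. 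Hence $\ho(f^{NJ})=\rep{f}_J$ is an equivalence of categories with inverse $\ho(g)$, as required.

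The main obstacle is the input fact that $\ho$ preserves finite products, which is exactly what makes an $N\mathbb I$-homotopy of simplicial sets collapse to a natural isomorphism of functors after applying $\ho$; this is the product-preservation property of the homotopy-category functor alluded to in the discussion preceding \cref{Ldoesnotrespectproducts}, and it is indispensable here, in contrast to the left adjoint $L$, which notably fails it. A secondary point to verify carefully is that cartesian closedness of the Joyal model structure really gives that $-\times NJ$ preserves acyclic cofibrations, which follows by forming the pushout–product of an acyclic cofibration with the cofibration $\emptyset\to NJ$.
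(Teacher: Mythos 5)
Your proof is correct, and its overall skeleton is the same as the paper's: both arguments reduce the statement to (i) the fact that $f^{NJ}\colon X^{NJ}\to Y^{NJ}$ is a categorical equivalence between quasi-categories, which you and the paper alike obtain from the cartesian closedness (self-enrichment) of the Joyal model structure of \cref{Joyalmodelstructure}, and (ii) the fact that $\ho$ carries categorical equivalences between quasi-categories to equivalences of categories. The difference lies in how step (ii) is handled: the paper simply cites Joyal's notes for this, whereas you prove it from first principles, using that a weak equivalence between fibrant--cofibrant objects is a homotopy equivalence, that $X^{NJ}\times N\mathbb I$ is a cylinder object in the Joyal model structure, and that $\ho$ preserves finite products (the Gabriel--Zisman fact, which the paper only alludes to in the discussion before \cref{Ldoesnotrespectproducts}) together with $\ho(N\mathbb I)\cong\mathbb I$, so that an $N\mathbb I$-homotopy collapses to a natural isomorphism of functors. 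Your version buys self-containedness and makes explicit exactly which structural property of $\ho$ --- preservation of finite products, precisely the property that $L$ lacks --- makes the argument work; the paper's version buys brevity at the cost of an external citation. Both steps of your argument are sound, including the Ken Brown reduction in step (i) and the identification of a functor out of $\ho(X^{NJ})\times\mathbb I$ with a natural isomorphism in step (ii).
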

\begin{proof}
Suppose that $f\colon X\to Y$ is a categorical equivalence between quasi-categories.
For any category $J$, the induced map $f^{NJ}\colon X^{NJ}\to Y^{NJ}$ is a categorical equivalence, since the Joyal model structure is enriched over itself. By \cite[\textsection1.12]{Joyalnotes} it induces an equivalence of homotopy categories $\ho(f^{NJ})\colon \ho(X^{NJ})\to \ho(Y^{NJ})$, and this is by definition precisely $$\rep{f}(J)\colon\rep{X}(J)\to\rep{Y}(J),$$ as desired.
\end{proof}

\begin{rmk}
Notice that $R$ preserves generating cofibrations since $RL\cong \id_{\sset}$ and it preserves all weak equivalences by definition. By \cref{adjointsofR}, the functor $R$ has a right adjoint $U$ and thus
$$R\colon\pder^{st}\rightleftarrows\sset\colon U$$
is a Quillen pair, which is in fact a Quillen equivalence.
\end{rmk}

We conclude with a brief discussion on further directions.
\begin{digression}
\label{finaldigression}
With a model structure on $\pder^{st}$ that is Quillen equivalent to the Joyal model structure on $\sset$ the next natural goal is to attempt to endow the category of fibrant objects of $\pder^{st}$ with the structure of an $\infty$-cosmos, in the sense of \cite[Definition 2.1.1]{RV4}, and to attempt to show that this $\infty$-cosmos is biequivalent to the $\infty$-cosmos $\qcat$ of quasicategories, defined in \cite[Example 2.1.4]{RV4}. Having equivalent $\infty$-cosmoi guarantees that the $2$-category theory in each $\infty$-cosmos is the same, so in particular the notions of limits and colimits, adjunctions, and cartesian fibrations coincide.

Mimicking what is done in similar scenarios (e.g. when starting with the model structure for complete Segal spaces, Segal categories, and $1$-complicial sets, see \cite[\textsection2]{RV4}), one would attempt to upgrade the model structure on $\pder^{st}$ to one enriched over the Joyal model structure on $\sset$, such that the functors $L$ and $R$ form a simplicial Quillen adjunction.
The standard technique to do this is to use \cite[Proposition 2.2.3]{RV4} to transfer the simplicial enrichment, which works whenever the left adjoint $L$ is strong symmetric monoidal.

However, by \cref{Ldoesnotrespectproducts}, the functor $L$ does not preserve products. By \cite[Proposition 3.7.10]{RiehlCHT},
 one can deduce that the functor $R$ is then not compatible with simplicial cotensors.  These two equivalent conditions
obstruct the simplicial enrichment on $\pder^{st}$ discussed in \cref{pDersimplicial} from giving an enrichment over the Joyal model structure, and indeed prevent the simplicial lifts of $L$ and $R$ from even being simplicially adjoint functors.  This means that the conditions of  \cite[Proposition 2.2.3]{RV4} are not satisfied.

At this moment, the authors have been unable to provide another method to achieve the desired $\infty$-comos structure on the category of fibrant prederivators using the functor $R$. Nonetheless,
as a consequence of Carlson's Theorem (which was stated as \cref{theoremcarlson}) the simplicial category of quasi-representable prederivators is isomorphic via the functor $\mathbb Ho$ to the simplicial category of quasicategories, which is indeed an $\infty$-cosmos. This in itself allows for several interesting results, and is the subject of a future project.
\end{digression}

\bibliographystyle{alpha}
\bibliography{ref}

\end{document}